\newcounter{dummy}
\def\dashint{\let\mathchoice\oldmathchoice\,\ThisStyle{\ensurestackMath{%
			\stackinset{c}{.2\LMpt}{c}{.5\LMpt}{\SavedStyle-}{%
				\SavedStyle\phantom{\int}}}%
		\setbox0=\hbox{$\SavedStyle\int\,$}\kern-\wd0}\int%
	\let\mathchoice\newmathchoice}
\newcommand{\R}{\mathbb{R}}
\newcommand{\N}{\mathbb{N}}
\newcommand{\e}{\varepsilon}
\newcommand{\Rtimes}{\mathbb{R}^{3\times 3}}
\newcommand{\Rntimesn}{\mathbb{R}^{n\times n}}
\newcommand{\Colonh}{:_h}
\newcommand{\Sym}{\operatorname{sym}}
\newcommand{\Skew}{\operatorname{skew}}
\newcommand{\divh}{\operatorname{div}_h}
\newcommand{\tr}[1]{\operatorname{tr}_{#1}}
\newcommand{\dist}{\operatorname{dist}}
\newcommand{\B}{\mathcal{B}}
\newcommand{\sym}{\operatorname{sym}}
\newcommand{\myitem}[1][]{\item[#1]\refstepcounter{dummy}\def\@currentlabel{#1}}
\newtheorem{theorem}{Theorem}[section]
\newtheorem{lem}[theorem]{Lemma}
\newtheorem{Coro}[theorem]{Corollary}
\theoremstyle{definition}
\newtheorem{bem}[theorem]{Remark}
\numberwithin{equation}{section}
\titleformat{\section}{\normalfont\Large\bfseries}{\thesection}{1em}{}
\titleformat{\subsection}{\normalfont\large\bfseries}{\thesubsection}{1em}{}
\titleformat{\subsubsection}{\normalfont\normalsize\bfseries}{\thesubsubsection}{1em}{}
\titleformat{\paragraph}[runin]{\normalfont\normalsize\bfseries}{\theparagraph}{1em}{}
\titleformat{\subparagraph}[runin]{\normalfont\normalsize\bfseries}{\thesubparagraph}{1em}{}
\begin{document}
\begin{titlepage}
	\title{Convergence of Thin Vibrating Rods to a Linear Beam Equation}%\\[1ex]}
	\author{Helmut Abels\thanks{Fakult\"{a}t f\"{u}r Mathematik, Universit\"{a}t Regensburg, 93040 Regensburg, Germany, e-mail:
			helmut.abels@mathematik.uni-regensburg.de, corresponding author} \space and Tobias Ameismeier\thanks{Fakult\"{a}t f\"{u}r Mathematik, Universit\"{a}t Regensburg, 93040 Regensburg, Germany, e-mail:
			tobias.ameismeier@mathematik.uni-regensburg.de}}
         %           \date{}
\end{titlepage}
\maketitle
\begin{abstract}
	We show that solutions for a specifically scaled nonlinear wave equation of nonlinear elasticity converge to solutions of a linear Euler-Bernoulli beam system. We construct an approximation of the solution, using a suitable asymptotic expansion ansatz based upon solutions to the one-dimensional beam equation. Following this, we derive the existence of appropriately scaled initial data and can bound the difference between the analytical solution and the approximating sequence.
\end{abstract}
\noindent{\textbf{Key words:}} Wave equation, nonlinear elasticity, thin rods, dimension reduction

\vspace{2mm}	

\noindent{\textbf{AMS-Classification:}} Primary: 74B20, %%    	Nonlinear elasticity
Secondary:
35L20, %%    	Boundary value problems for second-order, hyperbolic equations
35L70,  %%%   	Nonlinear second-order PDE of hyperbolic type
74K10 %%    	Rods (beams, columns, shafts, arches, rings, etc.)
\section{Introduction}
The relation between solutions of the equations of nonlinear elasticity and solutions for lower dimensional models is of great interest since the lower dimensional models are often easier to analyse and to use for numerical simulations. A general introduction to this topic can be found in \cite{Antman} or for continuum mechanics see \cite{Gurtin}. In dependence of the size of the deformation and the applied forces different lower dimensional models can occur. Therefore a rigoros derivation of the lower dimensional models are of great interest. In the case of the time independent case there are many results, cf.\ e.g.\ Friesecke, James and M\"uller~\cite{FJM2002, FJM2006} for the case of plates and Mora and M\"uller~\cite{MoraMueller2003, MoraMueller2004} in the case of rods and Scardia~\cite{Scardia2006, Scardia2009} for curved rods. But there are only few results in the time independent case so far. % A result on large time existence of of the time dependent wave equation of nonlinear elasticity for thin plates can be found in A.~, Mora, and M"uller~\cite{AbelsMoraMueller}

In this contribution we investigate the relation between solutions of an appropriately scaled wave equation of nonlinear elasticity and solutions of a linear Euler-Bernoulli beam system. More precisely, let $\Omega := [0,L]\times S$ be the reference configuration of a three dimensional rod, where $L>0$ and $S\subset\R^2$ is the cross section. Then we consider the following nonlinear system
\begin{align}\label{eq:1}
	\partial_t^2 u_h - \frac{1}{h^2}\operatorname{div}_h \Big(D\tilde{W}(\nabla_h u_h)\Big) &= h^{2} f_h \quad \text{in } \Omega\times [0, T),\\\label{eq:2}
	D\tilde{W}(\nabla_h u_h)\nu|_{(0, L)\times \partial S} &= 0,\\
	u_h \text{ is $L$-periodic} &\text{ w.r.t. } x_1,\\\label{eq:4}
	(u_h, \partial_t u_h)|_{t=0} &= (u_{0,h}, u_{1,h}),
\end{align}
where $T > 0$ and $\tilde{W}$ is some elastic energy density chosen later. Existence of strong solutions for large times of this system was shown in \cite{AbelsAmeismeier1}. More details on how to justify the scaling can be found there as well. 
The limit system as $h\to 0$ is given by
\begin{gather}\label{eq:limit1}
	\partial_t^2 v + \begin{pmatrix}	I_2 & 0 \\ 0 & I_3 \end{pmatrix} \partial^4_{x_1} v = g\quad\text{in } [0,L]\times [0,\infty)\\
	v \text{ is $L$-periodic in } x_1\\\label{eq:limit3}
	(v,\partial_t v)|_{t=0} = (\tilde{v}_0, \tilde{v}_1)
\end{gather}
where $\tilde{v}_0, \tilde{v}_1$ and $I_2, I_3$ are appropriately chosen initial values and weights, respectively. It is the goal of this manuscript to prove convergence of the solutions of the system \eqref{eq:1}-\eqref{eq:4} to solutions of the limit system \eqref{eq:limit1}-\eqref{eq:limit3} with appropriate convergence rates for well-prepared initial data, cf.\ Theorem~\ref{thm:Main} below.

In an energetic setting the relations between higher dimensional models and lower dimensional ones, using the notion of $\Gamma$-convergence a fundamental contribution was given in \cite{FJM2002}. There the classical geometric rigidity is proven. Using this result it was possible to prove a lot of convergence results in different geometrical situations and scaling regimes in the static setting, see for instance \cite{FJM2006, MoraMueller2004, MoraMueller2003}. In the dynamical case for plates a convergence result can be found in \cite{AbelsMoraMuellerConvergence} and in \cite{FK2020} in the case of viscelasticity. The large times existence and a first order asymptotic for plates was shown in \cite{AbelsMoraMueller}.

In the following we want to explain the main novelties and difficulties of this contribution. In a first step we construct an approximation using the solution of the lower dimensional system. This approximation is constructed such that it solves the linearisation around zero of the nonlinear, three dimensional equation up to an error of order $h^3$. This is done explicitly by determining suitable prefactor functions as solutions of systems on $S$.
Thereafter the main difficulty of this work is to establish existence of suitable initial data in order to ensure large times existence for the solution of the nonlinear problem. This is done in Section \ref{subsection::ExistenceOfAndBoundsOnInitialValues}. Here we use the nonlinear equations for the initial data from the compatability conditions. These are solved via a fixed point argument on precisely chosen function spaces. Finally in Section \ref{subsection::MainResult} the convergence properties are proven. For this we use a general result for solutions of the linearised equation. Moreover we have to carefully treat the rotational parts of the initial data, as the spaces for the fixed point argument do not cover them. For this we use a decomposition and the fact that the elastic energy density is chosen as $\tilde{W}(F) = \operatorname{dist}(Id+F; SO(3))$.\\[1ex]
        The results are part of the second author's PhD thesis~\cite{AmeismeierDiss}.\\[1ex]
\noindent{\textbf{Acknowledgements:} Tobias Ameismeier was supported by the RTG 2339 “Interfaces, Complex Structures,
	and Singular Limits” of the German Science Foundation (DFG). The support is gratefully acknowledged.}

\section{Preliminaries and Auxiliary Results}
\subsection{Notation}
We use  standard notation; in particular $\N$ and $\N_0 := \N \cup\{0\}$ denote the natural numbers with and without zero, respectively. Moreover, the norm on $\R$ and absolute value in $\R^n$, $\Rntimesn$ is denoted by $|.|$ for all $n\in\N$. For $p,~k\in \N$, we denote the classical Lebesgue and Sobolev spaces for some bounded, open set $M\subset\R^n$, by $L^p(M)$, $W^k_p(M)$ and $H^k(M) := W^k_2(M)$. A subscript $(0)$ on a function space will always indicate that elements have zero mean value, e.g., for $g\in H^1_{(0)}(M)$ we have 
\begin{equation}
\int_M g(x) dx = 0.
\end{equation}

The cross section of the rod is always denoted by $S\subset\R^2$ and is assumed to be a smooth and bounded domain. Furthermore be $\Omega_h := (0,L) \times hS \subset \R^3$ for $h \in (0,1]$ and $L>0$ and for convenience we write $\Omega := \Omega_1$. We assume that $S$ satisfies
\begin{align}
\int_S x_2 x_3 dx' &= 0
\label{globalEqualitySecondMomentZero}
\qquad \text{and} \\
\int_S x_2 dx' &= \int_S x_3 dx' = 0,
\label{globalEqualityFirstMomentsZero}
\end{align}
where $x' := (x_2, x_3)\subset \R^2$. This is no loss of generality, as it can always be achieved via a translation and rotation. The scaling shell be such that we can assume $|S|=1$. Furthermore, we denote with $\nabla_h$ the scaled gradient defined as
\begin{equation*}
\nabla_h = \bigg(\partial_{x_1}, \frac{1}{h}\partial_{x_2}, \frac{1}{h}\partial_{x_3}\bigg)^T\quad \text{and}\quad \varepsilon_h(u)= \sym(\nabla_h u).
\end{equation*}
The respective gradient in only $x':=(x_2, x_3)$ direction is denoted by
\begin{equation*}
\nabla_{x'} := \big(\partial_{x_2}, \partial_{x_3}\big)^T.
\end{equation*}
The standard notation $H^k(\Omega)$ and $H^k(\Omega; X)$ is used for $L^2$-Sobolev spaces of order $k\in\N$ with values in $\R$ and some space $X$, respectively.

The space of all $n$-linear mappings $G\colon V^n \to\R$ for a vector space $V$ is denoted, throughout the paper by $\mathcal{L}^n(V)$, $n\in\N$. We deploy the standard identification of $\mathcal{L}^1(\Rntimesn) = (\Rntimesn)'$ with $\Rntimesn$, i.e., $G\in \mathcal{L}^1(\Rntimesn)$ is identified with $A\in\Rntimesn$ such that
\begin{equation*}
G(X) = A : X \quad\text{ for all } X\in\Rntimesn
\end{equation*}
where $A:X = \sum_{i,j=1}^n a_{ij} x_{ij}$ is the usual inner product on $\Rntimesn$. Analogously, for $G\in \mathcal{L}^2(\Rntimesn)$ we use the identification with $\tilde{G}\colon\Rntimesn\to\Rntimesn$ defined by
\begin{equation}
\tilde{G}X : Y = G(X,Y)\quad \text{ for all } X, Y\in\Rntimesn.
\label{IdentificationL2withTensor4Order}
\end{equation}  

We introduce a scaled inner product on $\Rntimesn$
\begin{equation*}
A\Colonh B := \frac{1}{h^2} \Sym A : \Sym B + \Skew A : \Skew B
\end{equation*}
for all $A$, $B\in\Rntimesn$ and $h>0$ and the corresponding norm is denoted by $|A|_h := \sqrt{A\Colonh A}$. With this we can define for $W\in \mathcal{L}^d(\Rntimesn)$ the induced scaled norm by 
\begin{equation*}
|W|_h := \sup_{|A_j|_h \leq 1, j =\{1,\ldots, d\}} |W(A_1, \ldots, A_d)|
\end{equation*}
Using $|A|_h \geq |A|_1 = |A|$ for all $A\in\Rntimesn$ it follows $|W|_h \leq |W|_1 =: |W|$ for all $W\in \mathcal{L}^d(\Rntimesn)$ and $0 < h \leq 1$. The scaled $L^p$-spaces are defined as follows
\begin{equation*}
\|W\|_{L^p_h(U, \mathcal{L}^d(\Rntimesn))} = \|W\|_{L^p_h(U)} = \left( \int_U |W(x)|_h^p dx\right)^{\frac{1}{p}}
\end{equation*}
if $p\in [1, \infty)$, where $U\subset\R^d$ is measurable. Thus $\|W\|_{L^p_h(U; \mathcal{L}^d(\Rntimesn))} \leq \|W\|_{L^p(U;\mathcal{L}^d(\Rntimesn))}$. The scaled norm for $f\in L^p(U, \Rntimesn)$ is defined in the same way
\begin{equation*}
\|f\|_{L^p_h(U,\Rntimesn)} = \|f\|_{L^p_h(U)} = \left( \int_U |f(x)|_h^p dx\right)^{\frac{1}{p}}.
\end{equation*}
Then
\begin{equation*}
\|f\|_{L^p_h(U; \Rntimesn)} \geq \|f\|_{L^p(U;\Rntimesn)}.
\end{equation*}

As we will work with periodic boundary condition in $x_1$-direction we introduce for $m\in\N$
\begin{align*}
H^m_{per}(\Omega) := \Big\{f\in H^m(\Omega) \;:\; \partial^\alpha_x f|_{x_1 = 0} = \partial^\alpha_x f|_{x_1 = L},\;\text{for all } |\alpha| \leq m-1 \Big\}.
\end{align*}
This space can equivalently defined in the following way, which is in some situations more convenient
\begin{equation*}
\tilde{H}^m_{per}(\Omega) := \Big\{f\in H^m_{loc}(\R\times\bar{S}) \;:\; f(x_1, x')= f(x_1 + L, x') \text{ almost everywhere}\Big\}
\end{equation*}
We equipped $\tilde{H}^m_{per}(\Omega)$ with the standard $H^m(\Omega)$-norm. As the maps $f\mapsto f|_{\Omega}\colon \tilde{H}^m_{per}(\Omega) \to H^m_{per}(\Omega)$ and $f\mapsto f_{per}\colon H^m_{per}(\Omega) \to \tilde{H}^m_{per}(\Omega)$ are isomorphisms, we identify $\tilde{H}^m_{per}(\Omega)$ with $H^m_{per}(\Omega)$. This leads immediately to the density of smooth functions in $H^m_{per}(\Omega)$, because, as $S$ is smooth, there exists an appropriate extension operator and thus we can use a convolution argument.

In various estimates we will use an anisotropic variant of $H^k(\Omega)$, as we will have more regularity in lateral direction. Therefore we define
\begin{align*}
H^{m_1, m_2} &(\Omega) := \Big\{u\in L^2(\Omega) \;:\; \partial_{x_1}^l \nabla_{x}^k u\in L^2(\Omega), \text{ for } k=0,\ldots, m_1, l=0,\ldots,m_2\\
&\partial_{x_1}^q \partial_{x}^\alpha u\Big|_{x_1 = 0} = \partial_{x_1}^q\partial_{x}^\alpha u\Big|_{x_1 = L}, \text{ for } q = 0,\ldots, m_1, |\alpha|\leq m_2 \text{ and } q + |\alpha| \leq m_1 + m_2 -1 \Big\}
\end{align*}
where $m_1,~m_2\in\N_0$, the inner product is given by
\begin{equation*}
(f,g)_{H^{m_1, m_2}(\Omega)} = \sum_{k=0,\ldots,m_1; l=0,\ldots m_2} \Big(\partial_{x_1}^l \nabla_x^k f, \partial_{x_1}^l \nabla_x^k g \Big)_{L^2(\Omega)}.
\end{equation*}
Furthermore we will use the scaled norms
\begin{align*}
	\|A\|_{H^m_h(\Omega)} &:= \left(\sum_{|\alpha| \leq m} \|\partial^\alpha_x A\|_{L^2_h(\Omega)}^2\right)^{\frac{1}{2}}\\
	\|B\|_{H^{m_1, m_2}_h(\Omega)} &:= \left(\sum_{k=0,\ldots,m_1; l=0,\ldots,m_2} \|\partial_{x_1}^l \nabla_x^k B\|^2_{L^2_h(\Omega)}\right)^{\frac{1}{2}}.
\end{align*}
for $A\in H^m(\Omega;\R^{n\times n})$ and $B\in H^{m_1,m_2}(\Omega;\R^{n\times n})$ and $n\in\N$. As an abbreviation we denote for $u\in H^k(\Omega;\R^3)$ the symmetric scaled gradient by $\e_h(u):=\Sym(\nabla_h u)$ and $\e(u) = \e_1(u) = \Sym(\nabla u)$.

The following lemma provides the possibility to take traces for $u\in H^{0,1}(\Omega)$:
\begin{lem}
	The operator $\tr{a}\colon H^{0,1}(\Omega) \to L^2(S)$, $u\mapsto u|_{x_1=a}$ is well defined and bounded.
\end{lem}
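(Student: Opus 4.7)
The plan is to reduce to the one-dimensional Sobolev embedding $H^1(0,L) \hookrightarrow C([0,L])$, applied sectionwise for almost every $x' \in S$, and then to integrate over $S$ using Fubini. Recall that $u \in H^{0,1}(\Omega)$ means precisely $u,\,\partial_{x_1} u \in L^2(\Omega)$; the periodicity condition in $x_1$ plays no role in the norm estimate.

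First, I would establish the bound on smooth functions. For $u \in C^\infty(\bar{\Omega})$, arbitrary $a, s \in [0,L]$, and $x' \in S$, the fundamental theorem of calculus yields
\begin{equation*}
u(a, x') = u(s, x') + \int_s^a \partial_{x_1} u(\tau, x') \, d\tau,
\end{equation*}
so by $(p+q)^2 \leq 2p^2 + 2q^2$ and Cauchy--Schwarz,
\begin{equation*}
|u(a, x')|^2 \leq 2|u(s, x')|^2 + 2L\int_0^L |\partial_{x_1} u(\tau, x')|^2 \, d\tau.
\end{equation*}
Integrating in $s$ over $(0, L)$, dividing by $L$, and subsequently integrating over $x' \in S$ via Fubini gives
\begin{equation*}
\|u|_{x_1 = a}\|_{L^2(S)}^2 \leq \tfrac{2}{L} \|u\|_{L^2(\Omega)}^2 + 2L \|\partial_{x_1} u\|_{L^2(\Omega)}^2 \leq C_L \|u\|_{H^{0,1}(\Omega)}^2.
\end{equation*}

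To pass from this to general $u \in H^{0,1}(\Omega)$, I would invoke density of smooth functions, which follows by combining a smooth extension operator for $S$ (available since $S$ is smooth) with convolution in $x_1$ on the periodic extension, exactly as sketched after the definition of $\tilde{H}^m_{per}$ in the paper. Since the above estimate is uniform, $\tr{a}$ then extends uniquely to a bounded linear operator $H^{0,1}(\Omega) \to L^2(S)$.

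The argument presents no substantive obstacle; the only mild subtlety is the density of smooth functions for this anisotropic space, but the same extension-plus-mollification procedure applies verbatim. Alternatively, one could phrase the whole proof as the continuous embedding $H^1((0,L); L^2(S)) \hookrightarrow C([0,L]; L^2(S))$ after identifying $H^{0,1}(\Omega)$ with this Bochner space, which makes the one-dimensionality of the argument transparent.
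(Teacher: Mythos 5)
Your argument is correct, and your last sentence already identifies the paper's own route: the authors simply observe that $H^{0,1}(\Omega) = H^1(0,L;L^2(S))$ embeds continuously into $BUC([0,L];L^2(S))$, from which the boundedness of $\operatorname{tr}_a$ is immediate. Your primary argument is the elementary version of the same fact: the fundamental-theorem-of-calculus estimate
\begin{equation*}
|u(a,x')|^2 \leq 2|u(s,x')|^2 + 2L\int_0^L |\partial_{x_1} u(\tau,x')|^2\,d\tau,
\end{equation*}
averaged in $s$ and integrated in $x'$, is precisely the proof of the one-dimensional Sobolev embedding $H^1(0,L;X) \hookrightarrow C([0,L];X)$ specialized to $X = L^2(S)$. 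What the explicit route buys you is an explicit constant $C_L$ and no reliance on vector-valued Sobolev embedding theory; what the paper's formulation buys is brevity and the automatic handling of the density question (since the Bochner-space embedding is already stated for the full space, not only for smooth functions). One small remark on your density step: since the trace estimate only involves $u$ and $\partial_{x_1} u$ in $L^2$, it is enough to mollify in $x_1$ alone (after periodic extension), so the smooth extension operator for $S$ is not really needed here, though invoking it does no harm.
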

\begin{proof}
	This is an immediate consequence of the embedding
	\begin{equation*}
	H^{0,1}(\Omega) = H^1(0,L;L^2(S)) \hookrightarrow BUC([0,L];L^2(S))
	\end{equation*}
	where $BUC([0,L];X)$ is the space of all uniformly continuous functions $f\colon [0,L] \to X$ for some Banach space $X$. 
\end{proof}

\subsection{The Strain Energy Density $W$ and Korn's Inequality}
\label{section:StrainEnergyDensityW}
%\label{section:NotationalAndTerminologicalConventions}
We investigate the mathematical assumptions and resulting properties of the strain-energy density $W$ we use in this contribution. We assume to have $W\colon \Rtimes\to [0,\infty)$ defined by
\begin{equation*}
	W(F) := \frac{1}{2} \dist(F, SO(3))
\end{equation*}
where $SO(3)$ denotes the group of special orthogonal matrices. This energy density clearly satisfies the following general assumptions
\begin{enumerate}
	\item[(i)]{$W\in C^\infty(B_\delta(Id); [0,\infty))$ for some $\delta >0$;}
	\item[(ii)] {$W$ is frame-invariant, i.e. $W(RF) = W(F)$ for all $F\in\Rtimes$ and $R\in SO(3)$;}
	\item[(iii)] {there exists $c_0 > 0$ such that $W(F) \geq c_0 \dist(F, SO(3))^2$ for all $F\in\Rtimes$ and $W(R) = 0$ for every $R\in SO(3)$.}
\end{enumerate}
\begin{bem} \label{RemarkdPropertiesOfElasticEnergyDensity}
	We note that $W$ has a minimum point at the identity, as $W(Id) = 0$ and $W(F)\geq 0$ for all $F\in\Rtimes$. Hence, we have for $\tilde{W}(F):=W(Id + F)$ for all $F\in\Rtimes$, $D\tilde{W}(0)[G] = 0$ for all $G\in\Rtimes$. Moreover, it holds $D^2 \tilde{W}(0) F = \operatorname{sym}F$ and for $P\in\Rtimes_{skew}$, $A, B\in\Rtimes$ we obtain
	\begin{equation}
		D^3 \tilde{W}(0)[A,B,P] = \Big((A^T- A)^T \operatorname{sym}(B) + (B^T - B)^T \operatorname{sym}(A)\Big) : P.
		\label{Global_ThirdOrderD3WEquality}
	\end{equation}
\end{bem}

The following lemma provides an essential decomposition of $D^3\tilde{W}$ in the general form.
\begin{lem}
	There is some constant $C>0$, $\e >0$ and $A\in C^\infty(\overline{B_\e(0)}; \mathcal{L}^3(\Rntimesn))$ such that for all $G\in\Rntimesn$ with $|G| \leq\e$ we have
	\begin{equation*}
	D^3\tilde{W}(G) = D^3\tilde{W}(0) + A(G)
	\end{equation*}
	where
	\begin{alignat}{2}
	|D^3\tilde{W}(0)|_h &\leq Ch&\quad & \text{for all } 0<h\leq 1, \label{Lemma2.6_1}\\
	|A(G)| &\leq C|G|&& \text{for all } |G|\leq\e.\label{Lemma2.6_2}
	\end{alignat}
	\label{Lemma_DecompD3TildeW}
\end{lem}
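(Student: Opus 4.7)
The plan is to set $A(G) := D^3\tilde W(G) - D^3\tilde W(0)$ on a ball $\overline{B_\e(0)}$ with $\e$ chosen smaller than the constant $\delta$ from assumption (i) in Section \ref{section:StrainEnergyDensityW}. Since $\tilde W(F) = W(\Id+F)$ is $C^\infty$ on $B_\delta(0)$, both $D^3 \tilde W$ and $A$ are smooth on $\overline{B_\e(0)}$, which gives the required regularity of $A$.

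For the bound \eqref{Lemma2.6_2} I would apply the fundamental theorem of calculus along the segment from $0$ to $G$:
\begin{equation*}
A(G) = D^3\tilde W(G) - D^3\tilde W(0) = \int_0^1 D^4\tilde W(sG)[G]\, ds.
\end{equation*}
Since $D^4 \tilde W$ is continuous on the compact set $\overline{B_\e(0)}$, its norm is bounded there by some constant, so the estimate $|A(G)| \leq C|G|$ follows immediately.

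The core of the proof is the scaled bound \eqref{Lemma2.6_1}. For any $X \in \R^{n\times n}$ with $|X|_h \leq 1$ we have $|\Sym X| \leq h$ and $|\Skew X| \leq 1$ directly from the definition of $|\cdot|_h$. By trilinearity,
\begin{equation*}
D^3\tilde W(0)[A,B,C] = \sum_{\alpha,\beta,\gamma \in \{s,k\}} D^3\tilde W(0)\big[A^{(\alpha)}, B^{(\beta)}, C^{(\gamma)}\big],
\end{equation*}
where $X^{(s)} := \Sym X$ and $X^{(k)} := \Skew X$. The explicit formula \eqref{Global_ThirdOrderD3WEquality} in Remark \ref{RemarkdPropertiesOfElasticEnergyDensity} applied with all three arguments skew uses $\Sym A = \Sym B = 0$, so the $(k,k,k)$ contribution vanishes identically. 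Each of the remaining seven terms contains at least one $\Sym$-factor of ordinary norm at most $h$; since $D^3\tilde W(0)$ is a fixed trilinear form on the finite-dimensional space $\R^{n\times n}$, each such term is bounded by $Ch^j$ with $j \in \{1,2,3\}$. For $h \in (0,1]$ all these powers satisfy $h^j \leq h$, and summing the finitely many contributions yields $|D^3\tilde W(0)|_h \leq Ch$.

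The only delicate step is the cancellation of the $(k,k,k)$ contribution via the explicit formula of Remark \ref{RemarkdPropertiesOfElasticEnergyDensity}; without it one obtains only a uniform bound from that single term rather than one of order $h$. Everything else is an application of smoothness of $\tilde W$ near $0$ and the scaling of $|\cdot|_h$ on the symmetric part.
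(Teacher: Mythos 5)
The paper does not give an internal proof of Lemma~\ref{Lemma_DecompD3TildeW}; it simply cites Lemma~2.6 of Abels--Mora--M\"uller. Your reconstruction is correct and is the natural argument that reference uses: defining $A(G) := D^3\tilde W(G) - D^3\tilde W(0)$, smoothness of $A$ follows from assumption (i), the Lipschitz bound \eqref{Lemma2.6_2} from the fundamental theorem of calculus and continuity of $D^4\tilde W$ on a compact ball, and the scaled bound \eqref{Lemma2.6_1} from decomposing each argument into $\Sym$ and $\Skew$ parts, observing that $|X|_h\le 1$ forces $|\Sym X|\le h$ and $|\Skew X|\le 1$, and using the explicit formula \eqref{Global_ThirdOrderD3WEquality} from Remark~\ref{RemarkdPropertiesOfElasticEnergyDensity} to see that the pure skew--skew--skew contribution vanishes (since $\operatorname{sym}$ of a skew matrix is zero), while every remaining term carries at least one factor of size $h$. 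Since $h\le 1$, the powers $h^j$, $j\in\{1,2,3\}$, are all bounded by $h$, giving $|D^3\tilde W(0)|_h\le Ch$. The one point worth making explicit, which you use implicitly, is the symmetry of $D^3\tilde W(0)$ in its three slots (a consequence of $\tilde W$ being $C^3$), which lets you move the skew argument into the third position to invoke \eqref{Global_ThirdOrderD3WEquality} regardless of which slot it originally occupied.
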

\begin{proof}
	For the proof we refer to \cite[Lemma 2.6]{AbelsMoraMueller}.
\end{proof}
With this we can prove the following bound for $D^3\tilde{W}$.
\begin{Coro}
	There exist $C$, $\e > 0$ such that
	\begin{equation}
	\|D^3\tilde{W}(Z)(Y_1, Y_2, Y_3)\|_{L^1(\Omega)} \leq Ch \|Y_1\|_{H^2_h(\Omega)}\|Y_2\|_{L^2_h(\Omega)}\|Y_3\|_{L^2_h(\Omega)}
	\label{Abels_(2.15)}
	\end{equation}
	for all $Y_1\in H^2(\Omega, \Rntimesn)$, $Y_2$, $Y_3\in L^2(\Omega; \Rntimesn)$, $0<h\leq 1$ and $\|Z\|_{L^\infty(\Omega} \leq \min\{\e, h\}$ and
	\begin{equation}
	\|D^3\tilde{W}(Z)(Y_1, Y_2, Y_3)\|_{L^1(\Omega)} \leq Ch \|Y_1\|_{H^1_h(\Omega)} \|Y_2\|_{H^1_h(\Omega)} \|Y_3\|_{L^2_h(\Omega)}
	\label{Abels_(2.16)}
	\end{equation}
	for all $Y_1$, $Y_2\in H^1(\Omega, \Rntimesn)$, $Y_3\in L^2(\Omega; \Rntimesn)$, $0<h\leq 1$ and $\|Z\|_{L^\infty(\Omega} \leq \min\{\e, h\}$ and 
	\begin{equation}
	\|D^3\tilde{W}(Z)(Y_1, Y_2, Y_3)\|_{L^1(\Omega)} \leq Ch \bigg\|\bigg(Y_1, \frac{1}{h} \operatorname{sym}(Y_1)\bigg)\bigg\|_{L^\infty(\Omega)} \|Y_2\|_{H^1_h(\Omega)} \|Y_3\|_{L^2_h(\Omega)}
	\label{globalBoundednessOfD3W(Z)(Y1,Y2,Y3)LInftyVersion}
	\end{equation}
	for all $Y_1\in L^\infty(\Omega, \Rntimesn)$, $Y_2$, $Y_3\in L^2(\Omega; \Rntimesn)$, $0<h\leq 1$ and $\|Z\|_{L^\infty(\Omega} \leq \min\{\e, h\}$.
	\label{Corollary_L1BoundsD3TildeW}
\end{Coro}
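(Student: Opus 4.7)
My plan is to reduce all three inequalities to a single pointwise bound and then apply H\"older's inequality together with appropriate Sobolev embeddings on the three-dimensional domain $\Omega$.

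The starting point is the splitting
\[
D^3\tilde{W}(Z) = D^3\tilde{W}(0) + A(Z)
\]
provided by Lemma~\ref{Lemma_DecompD3TildeW}. By the definition of $|\cdot|_h$ on $\mathcal{L}^3(\Rtimes)$ together with \eqref{Lemma2.6_1}, the first summand obeys
\[
|D^3\tilde{W}(0)(Y_1, Y_2, Y_3)| \leq |D^3\tilde{W}(0)|_h\,|Y_1|_h |Y_2|_h |Y_3|_h \leq Ch\,|Y_1|_h |Y_2|_h |Y_3|_h .
\]
For the second summand, the hypothesis $\|Z\|_{L^\infty(\Omega)} \leq \min\{\e, h\}$ together with \eqref{Lemma2.6_2} gives $|A(Z(x))| \leq Ch$ almost everywhere, and combining this with the elementary pointwise inequality $|Y_j|\leq |Y_j|_h$ (valid for $h\leq 1$) yields
\[
|A(Z)(Y_1, Y_2, Y_3)| \leq Ch\,|Y_1||Y_2||Y_3| \leq Ch\,|Y_1|_h |Y_2|_h |Y_3|_h .
\]
Adding the two contributions produces the uniform pointwise bound
\[
\big|D^3\tilde{W}(Z)(Y_1,Y_2,Y_3)\big| \leq Ch\,|Y_1|_h |Y_2|_h |Y_3|_h \qquad\text{a.e.\ on }\Omega,
\]
with $C$ independent of $h\in(0,1]$.

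Once this pointwise inequality is in hand, each of the three estimates follows by integrating and applying H\"older with a different split. For \eqref{Abels_(2.15)} I would use $L^\infty \cdot L^2 \cdot L^2$ and invoke the Sobolev embedding $H^2(\Omega)\hookrightarrow L^\infty(\Omega)$ to control the factor carrying $Y_1$. For \eqref{Abels_(2.16)} I would use $L^4 \cdot L^4 \cdot L^2$ together with $H^1(\Omega)\hookrightarrow L^4(\Omega)$. For \eqref{globalBoundednessOfD3W(Z)(Y1,Y2,Y3)LInftyVersion} I would again use $L^\infty \cdot L^2 \cdot L^2$, but estimate the $L^\infty$ factor on $Y_1$ directly, without any Sobolev embedding.

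The only genuinely subtle point is that these Sobolev embeddings must be used in the \emph{scaled} norms $\|\cdot\|_{L^p_h}$ and $\|\cdot\|_{H^k_h}$ with constants independent of $h$. I would handle this by decomposing $Y_1 = \sym(Y_1) + \Skew(Y_1)$ and exploiting the identity $|Y_1|_h^2 = \tfrac{1}{h^2}|\sym(Y_1)|^2 + |\Skew(Y_1)|^2$: for example
\[
\|Y_1\|_{L^\infty_h(\Omega)} \leq \tfrac{1}{h}\|\sym(Y_1)\|_{L^\infty(\Omega)} + \|\Skew(Y_1)\|_{L^\infty(\Omega)} \leq C\bigl(\tfrac{1}{h}\|\sym(Y_1)\|_{H^2(\Omega)} + \|\Skew(Y_1)\|_{H^2(\Omega)}\bigr) \leq C\|Y_1\|_{H^2_h(\Omega)},
\]
and analogously for the $L^4$-variant needed in \eqref{Abels_(2.16)}. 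The very same splitting produces exactly the combination $\|(Y_1, \tfrac{1}{h}\sym(Y_1))\|_{L^\infty(\Omega)}$ appearing in \eqref{globalBoundednessOfD3W(Z)(Y1,Y2,Y3)LInftyVersion}, which dominates $\|Y_1\|_{L^\infty_h(\Omega)}$. This $h$-bookkeeping is really the only obstacle; once it is in place, each of the three estimates collapses into a short calculation.
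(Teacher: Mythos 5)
Your argument is correct and matches the route the paper itself intends: the paper's proof is the one-line remark that the inequalities ``follow directly from Lemma~\ref{Lemma_DecompD3TildeW} and Hölder's inequality,'' and what you wrote is exactly that remark unpacked --- the decomposition $D^3\tilde W(Z)=D^3\tilde W(0)+A(Z)$ gives the pointwise bound $|D^3\tilde W(Z)(Y_1,Y_2,Y_3)|\le Ch\,|Y_1|_h|Y_2|_h|Y_3|_h$ via \eqref{Lemma2.6_1}, \eqref{Lemma2.6_2}, $|Y_j|\le|Y_j|_h$, and then H\"older with the appropriate exponent splits together with $H^2(\Omega)\hookrightarrow L^\infty(\Omega)$, $H^1(\Omega)\hookrightarrow L^4(\Omega)$ (valid since $\Omega\subset\mathbb{R}^3$ is fixed) finishes each case. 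The sym/skew bookkeeping you added, resting on $|Y|_h^2=h^{-2}|\sym Y|^2+|\Skew Y|^2$ and the observation that $\sym$ commutes with $\partial^\alpha$, is exactly what is needed to keep the constants $h$-independent and to produce the $\|(Y_1,h^{-1}\sym Y_1)\|_{L^\infty}$ factor in the third estimate; no gap.
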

\begin{proof}
	The inequalities follow directly from Lemma \ref{Lemma_DecompD3TildeW} and Hölder's inequality.
\end{proof}

In order to bound the full scaled gradient $\nabla_h g$ of some function $g\in H^1_{per}(\Omega)$ by the symmetric one, we need a sharp Korn's inequality for thin rods. As rigid motions $x\mapsto \alpha x^\perp$ for $\alpha\in\R$ arbitrary are admissible functions in $H^1_{per}(\Omega)$ we can not expect that the full scaled gradient is bounded by $\e_h(g)$. Precisely we obtain the following results.
\begin{lem}
  There exists a constant $C=C(\Omega)>0$ such that for all $0 < h\leq 1$ and $u\in H^1_{per}(\Omega;\R^3)$ we have
  	\begin{equation}
	\bigg\|\nabla_h u - \frac{1}{h}B(u)\bigg\|_{L^2(\Omega)} \leq C\bigg\|\frac{1}{h} \e_h(u)\bigg\|_{L^2(\Omega)},
	\label{Korn}
	\end{equation}
        where
	\begin{equation}
	B(u) =
	\begin{pmatrix}
	0&0&0\\
	0&0& a(u)\\
	0& - a(u)& 0
	\end{pmatrix}
	\label{KornStructureS}
	\end{equation}
	with $a(u) = \frac{1}{|\Omega|}\int_\Omega \partial_{x_3} u_2(x) - \partial_{x_2} u_3(x) dx$.
	\label{KornIneqS}
\end{lem}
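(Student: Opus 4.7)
The strategy is to recast the claim as a classical Korn inequality on the thin rod $\Omega_h := (0,L) \times hS$ via the rescaling $V(y) := u(y_1, y_2/h, y_3/h) \in H^1_{per}(\Omega_h;\R^3)$. First I would observe that the chain rule yields $\nabla V(y) = (\nabla_h u)(y_1, y_2/h, y_3/h)$ and analogously $\varepsilon(V)(y) = \varepsilon_h(u)(y_1, y_2/h, y_3/h)$, while the Jacobian of the change of variables gives $\|\cdot\|_{L^2(\Omega_h)} = h\,\|\cdot\|_{L^2(\Omega)}$ for composed functions and for constant matrices alike. The desired estimate is thereby equivalent to the thin-rod Korn inequality
\[
\Big\|\nabla V - \tfrac{1}{h}B(u)\Big\|_{L^2(\Omega_h)} \leq \tfrac{C}{h}\|\varepsilon(V)\|_{L^2(\Omega_h)}.
\]

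Next I would identify the rigid-rotation kernel of $\varepsilon$ on $H^1_{per}(\Omega_h;\R^3)$: any infinitesimal rigid motion $Wy + b$ with $W$ skew that is $L$-periodic in $y_1$ must satisfy $W e_1 = 0$, which by skew-symmetry forces $W = \alpha J$ for some $\alpha \in \R$, where $J$ denotes the skew matrix with $J_{23} = 1$, $J_{32} = -1$. The change-of-variables identity $\int_{\Omega_h}(\partial_3 V_2 - \partial_2 V_3)\,dy = h\int_\Omega(\partial_3 u_2 - \partial_2 u_3)\,dx$ then shows that $\tfrac{1}{h}B(u)$ coincides, up to a fixed numerical factor, with the $L^2(\Omega_h)$-orthogonal projection of $\nabla V$ onto the one-dimensional subspace $\mathrm{span}(J) \subset \R^{3\times 3}$. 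Hence the task reduces to a Korn inequality for $V$ on $\Omega_h$ modulo this rotation subspace, with sharp Korn constant $C/h$.

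I would then prove this last inequality by a compactness/contradiction argument. Assume sequences $h_k \in (0,1]$ and $u_k \in H^1_{per}(\Omega;\R^3)$ violate the claim, normalized so that the left-hand side equals $1$ while $\tfrac{1}{h_k}\|\varepsilon_{h_k}(u_k)\|_{L^2(\Omega)} \to 0$. Subtracting a constant and a multiple of $r(x) := (0, x_3, -x_2)^T$ so that $u_k$ has zero mean and the corresponding rotation coefficient vanishes, the bound $\|\nabla_{h_k}u_k\|_{L^2(\Omega)} \leq 1$ implies $\|\partial_{x_1}u_k\|_{L^2} \leq 1$ and $\|\partial_{x_j}u_k\|_{L^2} \leq h_k$ for $j = 2, 3$, so $u_k$ is uniformly bounded in $H^1_{per}(\Omega;\R^3)$. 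Rellich's theorem then extracts a subsequence converging strongly in $L^2$ to a limit $u_*$ which, since $\partial_{x_j}u_k \to 0$ for $j = 2, 3$ and the scaled symmetric gradient vanishes in the limit, must lie in the rigid-rotation kernel; together with the normalization this forces $u_* = 0$, and a refined two-scale analysis of the skew part of $\nabla_{h_k}u_k$ yields the desired contradiction. The alternative case $h_k \to h_\infty > 0$ is immediate from the classical periodic Korn inequality on the fixed domain $\Omega$, where scaled and unscaled norms are uniformly equivalent.

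The principal difficulty will be securing the uniform $1/h$-dependence of the Korn constant in the degenerate limit $h \to 0$. This uniformity is enabled by the explicit one-dimensional structure of the periodic rotation kernel $\alpha J y + b$ and by careful bookkeeping of how the scaled gradient $\nabla_h$ interacts with the rescaling to $\Omega_h$ throughout the compactness argument.
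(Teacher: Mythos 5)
Your overall strategy --- rescale to $\Omega_h$, identify the periodic rigid-motion kernel as $\operatorname{span}(J)$, interpret the subtracted matrix as essentially the projection onto that kernel, and obtain $h$-uniformity by a compactness/contradiction argument --- is the standard route for thin-domain Korn estimates. The paper defers its own proof to \cite[Lemma 2.1]{AbelsMoraMueller} and \cite[Lemma 2.4.4]{AmeismeierDiss}, so a direct comparison is not possible and I assess the plan on its merits.

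The step you brush off as ``up to a fixed numerical factor'' is precisely where the plan fails to deliver the inequality as stated. Computing the $L^2(\Omega)$-orthogonal projection of $\nabla_h u$ onto $\operatorname{span}(J)$ gives the coefficient
\[
\gamma^* = \frac{\int_\Omega \nabla_h u : J\, dx}{2|\Omega|} = \frac{a(u)}{2h},
\]
so a Korn inequality modulo $\operatorname{span}(J)$ controls $\|\nabla_h u - \tfrac{1}{2h}B(u)\|_{L^2(\Omega)}$, not $\|\nabla_h u - \tfrac{1}{h}B(u)\|_{L^2(\Omega)}$. This factor of two is not absorbed into $C$: take $u = x^\perp = (0,-x_3,x_2)^T$. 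Then $\e_h(u)=0$, $\nabla_h u = -\tfrac{1}{h}J$, and $a(u)=-2$, so $\nabla_h u - \tfrac{1}{h}B(u) = \tfrac{1}{h}J\neq 0$; the left side of \eqref{Korn} equals $\sqrt{2|\Omega|}/h$ while the right side vanishes. In other words, your own computation shows that the formula as printed cannot be correct (the subtracted matrix must be the projection $\tfrac{a(u)}{2h}J$, i.e.\ there is a missing factor $\tfrac12$ in $a(u)$ or in front of $B(u)$), and rather than noting this you rely on a reduction that does not close. Had you tested against the rigid motion in the kernel --- the most natural sanity check for any Korn-type estimate --- you would have caught it.

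The compactness step is the other genuine gap. ``A refined two-scale analysis of the skew part'' is a placeholder, not an argument: after the normalization $\|\nabla_{h_k}u_k\|_{L^2(\Omega)}=1$ you only have weak convergence of $\nabla_{h_k}u_k$, and the vanishing of the weak limit does not contradict the normalization. The strong convergence of the scaled gradients (typically obtained slice-wise from the two-dimensional Korn inequality on $S$ plus separate control of the $\partial_{x_1}$ column, or from the compensated-compactness structure of $\operatorname{sym}\nabla$) is exactly what produces the uniform $1/h$ constant and needs to be worked out.
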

\begin{proof}
	The proof is similar to \cite[Lemma 2.1]{AbelsMoraMueller} and is done in \cite[Lemma 2.4.4]{AmeismeierDiss}
\end{proof}
\begin{lem}[Korn inequality in integral form]\ \\
	For all $0 < h\leq 1$ and $u\in H^1_{per}(\Omega;\R^3)$, there exists a constant $C_K=C_K(\Omega)$, such that
	\begin{equation}
	\|\nabla_h u\|_{L^2(\Omega)}\leq \frac{C_K}{h}\bigg( \|\e_h(u)\|_{L^2(\Omega)} + \bigg| \int_\Omega u \cdot x^\perp dx\bigg|\bigg)
	\label{KornMeanValue}
	\end{equation}
	where $x^\perp = (0, -x_3, x_2)^T$.
	\label{LemmaKornIntegralForm}
\end{lem}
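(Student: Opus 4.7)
The plan is to combine Lemma~\ref{KornIneqS} with a testing argument against $x^\perp$ that exploits the vanishing first moments \eqref{globalEqualityFirstMomentsZero} of the cross section. Starting from Lemma~\ref{KornIneqS} and the triangle inequality,
\begin{equation*}
\|\nabla_h u\|_{L^2(\Omega)} \leq \left\|\nabla_h u - \tfrac{1}{h}B(u)\right\|_{L^2(\Omega)} + \tfrac{1}{h}\|B(u)\|_{L^2(\Omega)} \leq \tfrac{C}{h}\|\e_h(u)\|_{L^2(\Omega)} + \tfrac{C}{h}|a(u)|,
\end{equation*}
so the task reduces to proving $|a(u)| \leq C\bigl(\|\e_h(u)\|_{L^2(\Omega)} + \bigl|\int_\Omega u \cdot x^\perp\,dx\bigr|\bigr)$.

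To this end, isolate the infinitesimal rotation. Because $a(x^\perp)\neq 0$ and $\e_h(x^\perp)=0$, there is a unique $\alpha \in \R$ (a fixed multiple of $a(u)$) such that $w := u - \alpha x^\perp$ satisfies $a(w) = 0$, and automatically $\e_h(w) = \e_h(u)$. Applying Lemma~\ref{KornIneqS} to $w$ gives
\begin{equation*}
\|\nabla_h w\|_{L^2(\Omega)} \leq \tfrac{C}{h}\|\e_h(u)\|_{L^2(\Omega)}.
\end{equation*}
Denote the cross-sectional mean by $\bar{w}(x_1) := \frac{1}{|S|}\int_S w(x_1,x')\,dx'$. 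By \eqref{globalEqualityFirstMomentsZero} one has $\int_S x^\perp\,dx' = 0$, so
\begin{equation*}
\int_\Omega w \cdot x^\perp\,dx = \int_\Omega (w - \bar{w})\cdot x^\perp\,dx.
\end{equation*}
The Poincaré inequality fibrewise on $S$ together with the scaling $\|\nabla_{x'} w\|_{L^2(\Omega)} \leq h\|\nabla_h w\|_{L^2(\Omega)}$ yields $\|w-\bar{w}\|_{L^2(\Omega)} \leq C h\|\nabla_h w\|_{L^2(\Omega)} \leq C\|\e_h(u)\|_{L^2(\Omega)}$, hence $\bigl|\int_\Omega w \cdot x^\perp\,dx\bigr| \leq C\|\e_h(u)\|_{L^2(\Omega)}$ by Cauchy--Schwarz. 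Since $\int_\Omega u\cdot x^\perp\,dx = \int_\Omega w\cdot x^\perp\,dx + \alpha\int_\Omega|x^\perp|^2\,dx$ and $\int_\Omega|x^\perp|^2\,dx$ is a positive constant independent of $h$, one reads off $|\alpha| \leq C\bigl(\bigl|\int_\Omega u\cdot x^\perp\,dx\bigr| + \|\e_h(u)\|_{L^2(\Omega)}\bigr)$ and, using that $\alpha$ is proportional to $a(u)$, the required bound on $|a(u)|$ follows.

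The delicate point is the final testing step: a naive Cauchy--Schwarz on $w$ itself would produce an unwanted factor $h^{-1}$ that cannot be absorbed. This is avoided precisely by projecting onto the cross-sectional mean $\bar{w}$---which is legitimate only because $\int_S x^\perp\,dx' = 0$ by \eqref{globalEqualityFirstMomentsZero}---so that $\|w-\bar{w}\|_{L^2}$ carries an extra factor of $h$ from the Poincaré inequality on $S$, exactly cancelling the $h^{-1}$ inherited from Lemma~\ref{KornIneqS}.
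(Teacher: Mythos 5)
Your proof is correct. The paper itself does not supply an in-text proof of Lemma~\ref{LemmaKornIntegralForm}---it just cites an external reference---so there is no in-paper argument to compare against line by line, but each step of your argument checks out and it builds naturally on the paper's own Lemma~\ref{KornIneqS}. Concretely: (i) $a(x^\perp)=-2\neq 0$ and $\nabla_h x^\perp$ is skew, so $\e_h(x^\perp)=0$, which makes the decomposition $u=w+\alpha x^\perp$ with $a(w)=0$ and $\e_h(w)=\e_h(u)$ legitimate; (ii) $B(w)=0$ turns Lemma~\ref{KornIneqS} into $\|\nabla_h w\|_{L^2}\le \frac{C}{h}\|\e_h(u)\|_{L^2}$; (iii) the vanishing first moments \eqref{globalEqualityFirstMomentsZero} give $\int_S x^\perp\,dx'=0$, so replacing $w$ by $w-\bar w$ costs nothing in the pairing; (iv) the fibrewise Poincar\'e inequality on $S$ combined with $\|\nabla_{x'}w\|_{L^2}\le h\|\nabla_h w\|_{L^2}$ produces exactly the compensating factor $h$; and (v) solving for $\alpha$ uses that $\int_\Omega|x^\perp|^2\,dx=L(I_2+I_3)>0$ is an $h$-independent constant. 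Your closing remark correctly identifies the subtraction of the cross-sectional mean---justified by \eqref{globalEqualityFirstMomentsZero}---as the key scaling device.
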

\begin{proof}
	A proof can be found in \cite{AbelsAmeismeier1}.
\end{proof}

\section{First Order Expansion in a Linearised Regime}
We construct an approximation to the unique solution of the non-linear system
\begin{align}
\partial_t^2 u_h - \frac{1}{h^2}\operatorname{div}_h \Big(D\tilde{W}(\nabla_h u_h)\Big) &= h^{2} f_h \quad \text{in } \Omega\times [0, T), \label{NLS1}\\
D\tilde{W}(\nabla_h u_h)\nu|_{(0, L)\times \partial S} &= 0,\label{NLS2}\\
u_h \text{ is $L$-periodic} &\text{ w.r.t. } x_1, \label{NLS3}\\
(u_h, \partial_t u_h)|_{t=0} &= (u_{0,h}, u_{1,h}), \label{NLS4}
\end{align}
where $\tilde{W}(F) = W(Id+F)$ for all $F\in \Rtimes$, $T > 0$. We assume that 
\begin{equation*}
f^h(x,t) = \begin{pmatrix}
0 \\ g(x_1, t)
\end{pmatrix}
\end{equation*} 
for some $g\in \bigcap_{k=0}^3 W^k_1(0,T; H^{10-2k}_{per}(0,L; \R^2))$, which implies 
\begin{equation*}
\int_S f^h(x,t) x_k dx' = 0
\end{equation*} 
for $k=2, 3$. Moreover we assume that
\begin{equation}
\max_{\sigma = 0,1,2}\|\partial_t^\sigma g|_{t=0}\|_{H^{2-2\sigma}(0,L)} \leq M,
\label{lokalAssumptionOfSmallnessForRightHandSideG}
\end{equation} 
where $M>0$ is chosen later. Without loss of generality we can assume $\int_0^L g dx_1 = 0$. Otherwise we substract  
	\begin{equation*}
		a(t) := \frac1{|\Omega|}\left(\int_\Omega u_{0,h} dx - t\int_\Omega u_{1,h} \, dx - \int_0^t (t-s) \int_\Omega f^h(s)\, dx\, ds\right)
	\end{equation*}
from $u_h$ analogously as in the proof of \cite[Theorem~3.1]{AbelsAmeismeier1}.

\subsection{Construction of the ansatz function}
For the ansatz function we consider the following system of one-dimensional beam equations
\begin{gather*}
\partial_t^2 v + \begin{pmatrix}	I_2 & 0 \\ 0 & I_3 \end{pmatrix} \partial^4_{x_1} v = g,\\
v \text{ is $L$-periodic in } x_1,\\
(v,\partial_t v)|_{t=0} = (\tilde{v}_0, \tilde{v}_1),
\end{gather*}
where $\tilde{v}_{0}\in H^{12}_{per}(0,L;\R^2)$, $\tilde{v}_{1}\in H^{10}_{per}(0,L;\R^2)$ such that 
\begin{equation}
\|\tilde{v}_0\|_{H^8(0,L)} \leq M \quad\text{ and }\quad \|\tilde{v}_1\|_{H^5(0,L)} \leq M
\label{lokalAssumptionOfSmallnessForInitialData}
\end{equation}
and
\begin{equation*}
I_k := \int_S x_k^2 dx'\qquad \text{for }k=2,3. 
\end{equation*}
 Then we obtain with standard methods, as e.g. in \cite[Theorem 11.8]{RenardyRogers}, the existence of a unique solution
\begin{equation*}
v\in \bigcap_{j=0}^4 C^j([0,T]; H^{12-2j}_{per}(0,L;\R^2)).
\end{equation*}
Moreover, due to the assumptions for $g$ and the periodicity of $v$ it follows
\begin{equation*}
\partial_t^2 \int_0^L v dx_1 = 0.
\end{equation*}
Now we define
\begin{align}
\tilde{u}_h(x,t) &= h^2 \begin{pmatrix} 
0 \\ v_2 \\ v_3
\end{pmatrix} + h^3 \begin{pmatrix}
-x_2\partial_{x_1} v_{2} - x_3 \partial_{x_1} v_{3} \\ 0 \\ 0
\end{pmatrix} + h^5 \begin{pmatrix}
a_2(x') \partial_{x_1}^3 v_2 + a_3(x') \partial_{x_1}^3 v_3 \\ 0 \\ 0
\end{pmatrix}\notag\\ 
&+ h^6 \begin{pmatrix}
0 \\ b_2(x') \partial_{x_1}^4 v_2 + c_3(x') \partial_{x_1}^4 v_3 \\ b_3(x') \partial_{x_1}^4 v_3 + c_2(x') \partial_{x_1}^4 v_2
\end{pmatrix},\label{GlobalDefinitionOfTildeUH}
\end{align}
where $a$, $b$, $c\colon S\to\R^2$ are chosen later. Then
\begin{align*}
\nabla_h \tilde{u}_h(x,t) &= 
h^2 \begin{pmatrix}
0 & -\partial_{x_1} v_2 & -\partial_{x_1} v_3\\
\partial_{x_1} v_2 & 0 & 0\\
\partial_{x_1} v_3 & 0 & 0\\
\end{pmatrix} 
+ h^3 \begin{pmatrix}
-x_2 \partial_{x_1}^2 v_2 - x_3 \partial_{x_1}^2 v_3 & 0 & 0\\
0 & 0 & 0 \\
0 & 0 & 0
\end{pmatrix}\\
& \quad + h^4 \begin{pmatrix}
0 & \partial_{x_2} a_2 \partial_{x_1}^3 v_2 + \partial_{x_2} a_3 \partial_{x_1}^3 v_3 & \partial_{x_3} a_2 \partial_{x_1}^3 v_2 + \partial_{x_3} a_3\partial_{x_1}^3 v_3 \\
0&0&0\\
0&0&0\end{pmatrix} \\
& \quad +h^5 \begin{pmatrix}
a_2 \partial_{x_1}^4 v_2 + a_3 \partial_{x_1}^4 v_3 & 0 &0\\
0& \partial_{x_2} b_2 \partial_{x_1}^4 v_2 + \partial_{x_2} c_3 \partial_{x_1}^4 v_3  & \partial_{x_3} b_2 \partial_{x_1}^4 v_2 +\partial_{x_3} c_3 \partial_{x_1}^4 v_3\\
0 & \partial_{x_2} b_3 \partial_{x_1}^4 v_3 +\partial_{x_2} c_2 \partial_{x_1}^4 v_2 & \partial_{x_3} b_3 \partial_{x_1}^4 v_3 + \partial_{x_3} c_2 \partial_{x_1}^4 v_2
\end{pmatrix}\\
& \quad + h^6 \begin{pmatrix}
0 & 0 & 0 \\
b_2 \partial_{x_1}^5 v_2 + c_3 \partial_{x_1}^5 v_3 & 0 & 0 \\
b_3 \partial_{x_1}^5 v_3 + c_2 \partial_{x_1}^5 v_2  & 0 & 0
\end{pmatrix}.
\end{align*}
Thus with $D^2 W(Id) F = \operatorname{sym} F$ we can derive
\begin{align*}
\frac{1}{h^2} \divh (D^2 W(Id) \nabla_h \tilde{u}_h) & =  h \begin{pmatrix}
\big(\frac{1}{2} \Delta a - (x_2, x_3)^T\big)\cdot \partial_{x_1}^3 v \\ 0 \\0
\end{pmatrix}\\
& \quad + h^2 \begin{pmatrix}
0 \\ \nabla_{x'} a(x')^T \partial_{x_1}^4 v +\begin{pmatrix}
\partial_{x_2}^2 b_2 & \partial_{x_2}^2 c_3\\
\partial_{x_3}^2  c_2 & \partial_{x_3}^2 b_3
\end{pmatrix} \partial_{x_1}^4 v
\end{pmatrix}\\
& \quad + \frac{h^2}{2} \begin{pmatrix}
0 \\ \begin{pmatrix}
\partial_{x_3} \partial_{x_2} c_2 + \partial_{x_3}^2 b_2 & \partial_{x_2}\partial_{x_3} b_3 + \partial_{x_3}^2 c_3 \\
\partial_{x_2}^2 c_2  + \partial_{x_2}\partial_{x_3} b_2 & \partial_{x_2}^2 b_3 + \partial_{x_2}\partial_{x_3} c_3
\end{pmatrix} \partial_{x_1}^4 v
\end{pmatrix} + r_h(x,t)
\end{align*}
for
\begin{equation*}
r_h(x,t) = O(h^3).
\end{equation*}
Moreover for the boundary condition it holds
\begin{align*}
D^2 & W(Id)[\nabla_h \tilde{u}_h] \nu = h^4 \begin{pmatrix}
\frac{1}{2}(\nabla_{x'} a \nu_{\partial S}) \cdot \partial_{x_1}^3 v \\ 0 \\ 0
\end{pmatrix}\\
& \quad + h^5 \begin{pmatrix}
0 \\ \big(\partial_{x_2} b_2 \nu_2 + \frac{1}{2}(\partial_{x_2} c_2 + \partial_{x_3} b_2)\nu_3\big) \partial_{x_1}^4 v_2 + \big(\partial_{x_2} c_3 \nu_2 + \frac{1}{2}(\partial_{x_2} b_3 + \partial_{x_3} c_3) \nu_3) \partial_{x_1}^4 v_3\big)\\ \big(\frac{1}{2}(\partial_{x_2} c_2 + \partial_{x_3} b_2) \nu_2 + \partial_{x_3} c_2 \nu_3\big) \partial_{x_1}^4 v_2 + \Big(\frac{1}{2}(\partial_{x_2} b_3 + \partial_{x_3} c_3)\nu_2 + \partial_{x_3} b_3 \nu_3\Big) \partial_{x_1}^4 v_3
\end{pmatrix}\\
& \quad + \frac{h^6}{2} \begin{pmatrix}
\nu^T
\begin{pmatrix}
b_2 & c_2 \\ b_3 & c_3
\end{pmatrix} \partial_{x_1}^5 v \\ 0 \\ 0
\end{pmatrix}\\
& =  h^4 \begin{pmatrix}
(\nabla_{x'} a \nu_{\partial S}) \cdot \partial_{x_1}^3 v \\ 0 \\ 0
\end{pmatrix} + h^5 \begin{pmatrix}
0 \\  \nu^T \begin{pmatrix}
\partial_{x_2} b_2 & \frac{1}{2}(\partial_{x_2} c_2 + \partial_{x_3} b_2) \\ \partial_{x_2} c_3  & \frac{1}{2} (\partial_{x_2} b_3 + \partial_{x_3} c_3)
\end{pmatrix} \partial_{x_1}^4 v \\ 
\nu^T 
\begin{pmatrix}
\frac{1}{2}(\partial_{x_2} c_2 + \partial_{x_3} b_2) & \partial_{x_3} c_2 \\ \frac{1}{2}(\partial_{x_2} b_3 + \partial_{x_3} c_3) & \partial_{x_3} b_3
\end{pmatrix}\partial_{x_1}^4 v
\end{pmatrix}\\
& \quad + \frac{h^6}{2} \begin{pmatrix}
\nu^T \begin{pmatrix}
b_2 & c_2 \\ b_3 & c_3
\end{pmatrix} \partial_{x_1}^5 v \\ 0 \\ 0
\end{pmatrix}.
\end{align*}
We choose now $a\colon S\to\R^2$ as the solution of the following system
\begin{align*}
\left\{
\begin{aligned}
-\Delta a &= -2\begin{pmatrix} x_2 \\ x_3 \end{pmatrix} \quad&&\text{ in } S\\
\nabla_{x'} a \nu &= 0 \quad&&\text{ on } \partial S
\end{aligned}
\right.
\end{align*}
with 
\begin{equation*}
\int_S a(x') dx' = 0.
\end{equation*}
Such a solution exists, because we can apply the Lax-Milgram Lemma for the weak Laplacian on $H^1_{(0)}(S;\R^2)$. Thereby, the coercivity follows from Poincaré's inequality. With well known regularity result, e.g. Theorem 4.18 in \cite{McLean}, we obtain $a\in C^\infty(\overline{S},\R^2)$. The systems for $b$ and $c$ decouple to 
\begin{align}
\left\{
\begin{aligned}
\partial_{x_2}^2 b_2 + \frac{1}{2} \partial_{x_3}^2 b_2 + \frac{1}{2}\partial_{x_3}\partial_{x_2} c_2 &= I_1 - \partial_{x_2} a_2 &&\quad\text{ in } S\\
\frac{1}{2} \partial_{x_2}^2 c_2 + \partial_{x_3}^2 c_2 + \frac{1}{2} \partial_{x_2} \partial_{x_3} b_2 & = -\partial_{x_3} a_2 &&\quad\text{ in } S
\end{aligned}
\right.
\label{SystemForC2andB2}
\end{align}
and
\begin{align}
\left\{
\begin{aligned}
\partial_{x_2}^2 c_3 + \frac{1}{2} \partial_{x_3}^2 c_3 + \frac{1}{2}\partial_{x_2}\partial_{x_3} b_3 &= - \partial_{x_2} a_3 &&\quad\text{ in } S\\
\frac{1}{2} \partial_{x_2}^2 b_3 + \partial_{x_3}^2 b_3 + \frac{1}{2} \partial_{x_2} \partial_{x_3} c_3 & = I_2 -\partial_{x_3} a_3 &&\quad\text{ in } S
\end{aligned}
\right.
\label{SystemForC3andB3}
\end{align}
Defining the matrix of coefficients $(\mathfrak{p}^{\alpha \beta}_{ij})^{\alpha, \beta = 2,3}_{i,j=1,2}$ in the following way
\begin{gather*}
\mathfrak{p}^{22}_{11} = 1\quad \mathfrak{p}^{33}_{11} = \frac{1}{2}\quad \mathfrak{p}^{32}_{12} = \frac{1}{4} \quad \mathfrak{p}^{23}_{12} = \frac{1}{4}\\
\mathfrak{p}^{22}_{22} = \frac{1}{2}\quad \mathfrak{p}^{33}_{22} = 1\quad \mathfrak{p}^{23}_{21} = \frac{1}{4} \quad \mathfrak{p}^{32}_{21} = \frac{1}{4}\\
\mathfrak{p}^{\alpha \beta}_{ij} = 0 \text{ otherwise}.
\end{gather*}
With $w = (b_2, c_2)^T$ and $f = (-I_1 - \partial_{x_2} a_2, -\partial_{x_3} a_2)^T$, \eqref{SystemForC2andB2} is equivalent to
\begin{equation*}
\sum_{\alpha,\beta=2}^3 \sum_{j=1}^2 -\partial_\beta \big(\mathfrak{p}^{\alpha\beta}_{ij} \partial_\alpha w_j\big) = f_i
\end{equation*}	
for $i=1$, $2$. Let now
\begin{equation*}
\xi := \begin{pmatrix} \xi_{12} & \xi_{13} \\ \xi_{22} & \xi_{23} \end{pmatrix}\in \R^{2\times 2}.
\end{equation*}
be arbitrary. Then it holds
\begin{align*}
\sum_{\alpha,\beta=2}^3 \sum_{i,j=1}^2 \mathfrak{p}^{\alpha\beta}_{ij} \xi_{i\alpha} \xi_{j\beta} %&= \xi_{12}^2 + \frac{1}{2}\xi_{13}^2 + \frac{1}{2} \xi_{22}^2 + \xi_{23}^2 + \frac{1}{2} \xi_{22}\xi_{13} + \frac{1}{2} \xi_{12}\xi_{23}\\
&= \frac{3}{4}(\xi_{12}^2 + \xi_{23}^2) + \frac{1}{4} (\xi_{12} + \xi_{23})^2 + \frac{1}{4}(\xi_{13}^2 + \xi_{22}^2) + \frac{1}{4} (\xi_{13} + \xi_{22})^2 \\
&\geq \frac{1}{4} (\xi_{12}^2 + \xi_{13}^2 + \xi_{22}^2 + \xi_{23}^2) = \frac{1}{4} |\xi|^2
\end{align*}
and thus $\mathfrak{p}^{\alpha\beta}_{ij}$ satisfies the Legendre condition for $\lambda = \frac{1}{4}$.
Thus we can solve \eqref{SystemForC2andB2} and \eqref{SystemForC3andB3} with homogeneous Dirichlet boundary condition
\begin{equation*}
\begin{pmatrix}	b_2 \\  c_2 \end{pmatrix} = 0 \quad\text{and}\quad \begin{pmatrix} b_3 \\ c_3 \end{pmatrix} = 0 \quad\text{ on } \partial S
\end{equation*}
as the system \eqref{SystemForC3andB3} can be treated in the same manner. The regularity of $a$ implies now that $b = (b_2,b_3)$ and $c=(c_2,c_3)$ are $C^\infty(\overline{S};\R^2)$.

The approximating solution $\tilde{u}_h$ solves then the following system
\begin{alignat*}{2}
\partial_t^2 \tilde{u}_h - \frac{1}{h^2} \divh\Big(D^2\tilde{W}(0) \nabla_h\tilde{u}_h\Big) &= h^{2} f_h - r_h \quad &&\text{ in } \Omega\times (0,T),\\
D^2\tilde{W}(0)[\nabla_h \tilde{u}_h]\nu\Big|_{(0,L)\times \partial S} &= \tr{\partial\Omega}(r_{N,h}) \nu \quad &&\text{ on } \partial\Omega\times (0,T),\\
\tilde{u}_h \text{ is } L\text{-periodic}&\text{ in } x_1 \text{-direction},&&\\
(\tilde{u}_h, \partial_t \tilde{u}_h)|_{t=0} &= (\tilde{u}_{0,h}, \tilde{u}_{1,h}), &&
\end{alignat*}
where $r_h$ is chosen as above,
\begin{equation*}
r_{N,h} := h^5 \begin{pmatrix}
0 \\  \nu^T \begin{pmatrix}
\partial_{x_2} b_2 & \frac{1}{2}(\partial_{x_2} c_2 + \partial_{x_3} b_2) \\ \partial_{x_2} c_3  & \frac{1}{2} (\partial_{x_2} b_3 + \partial_{x_3} c_3)
\end{pmatrix} \partial_{x_1}^4 v \\ 
\nu^T 
\begin{pmatrix}
\frac{1}{2}(\partial_{x_2} c_2 + \partial_{x_3} b_2) & \partial_{x_3} c_2 \\ \frac{1}{2}(\partial_{x_2} b_3 + \partial_{x_3} c_3) & \partial_{x_3} b_3
\end{pmatrix}\partial_{x_1}^4 v
\end{pmatrix},
\end{equation*}
and the initial data is given by
\begin{align}
\tilde{u}_{j,h}(x,t) &= h^2 \begin{pmatrix} 
0 \\ v^j_2 \\ v^j_3
\end{pmatrix} + h^3 \begin{pmatrix}
-x_2 \partial_{x_1} v^j_{2} - x_3 \partial_{x_1} v^j_{3} \\ 0 \\ 0
\end{pmatrix} + h^5 \begin{pmatrix}
a_2(x') \partial_{x_1}^3 v^j_2 + a_3(x') \partial_{x_1}^3 v^j_3 \\ 0 \\ 0
\end{pmatrix}\notag\\ 
&\qquad + h^6 \begin{pmatrix}
0 \\ b_2(x') \partial_{x_1}^4 v^j_2 + c_3(x') \partial_{x_1}^4 v^j_3 \\ b_3(x') \partial_{x_1}^4 v^j_3 + c_2(x') \partial_{x_1}^4 v^j_2
\end{pmatrix} \label{GlobalDefinitionOfInitialValuesOfTildeUH}
\end{align}
with $v^j := \partial_t^j v|_{t=0}$ and $j=0,\ldots, 4$. For the remainder it holds
\begin{equation*}
\|r_h\|_{C^0([0,T];L^2)} \leq Ch^3\quad\text{ and }\quad\|r_{N,h}\|_{C^2([0,T]; H^1)} \leq Ch^5.
\end{equation*}

\subsection{Existence of and Bounds on Initial Values}
\label{subsection::ExistenceOfAndBoundsOnInitialValues}
Define now
\begin{equation*}
\mathcal{B} := H^1_{per}(\Omega;\R^3)\cap \Big\{u\in L^2(\Omega;\R^3) \;:\; \int_\Omega udx = \int_\Omega u\cdot x^\perp dx = 0 \Big\}
\end{equation*}
equipped with the norm
\begin{equation*}
\|u\|_{\mathcal{B}_h} := \bigg\|\frac{1}{h} \e_h(u)\bigg\|_{L^2(\Omega)}.
\end{equation*}
\begin{lem}
	 There exists constants $C_0>0$ and $M_0\in (0,1]$ such that for $0<h\le 1$ and $f\in H^{1,1}_{per}(\Omega;\R^3)$ with $\|f\|_{H^{1,1}(\Omega)} \leq M_0 h$ and $\int_\Omega f dx = 0$ there exists a unique solution $w\in H^3_{per}(\Omega; \R^3)\cap \mathcal{B}$ with $\partial_{x_1} w\in H^3_{per}(\Omega;\R^3)$ of 
	\begin{equation}
	\frac{1}{h^2}\Big(D\tilde{W}(\nabla_h w), \nabla_h \varphi \Big)_{L^2(\Omega)} = (f,\varphi)_{L^2(\Omega)}\quad \text{for all }\varphi \in \mathcal{B}.
	\label{EquationDefiningU0H}
	\end{equation}
	 Moreover
	\begin{equation}
	\bigg\|\bigg(\frac{1}{h}\e_h(w), \nabla_h\frac{1}{h}\e(w), \nabla_h^2 w\bigg)\bigg\|_{H^{1,1}(\Omega)} \leq C_0 \|f\|_{H^{1,1}(\Omega)}
	\label{InequalityBoundsForU0H}
	\end{equation}
	holds. If $w'\in H^3_{per}(\Omega;\R^3)\cap \mathcal{B}$ with $\partial_{x_1} w\in H^3_{per}(\Omega;\R^3)$ is the solution to $f'\in H^{1,1}_{per}(\Omega;\R^3)$ with $\|f'\|_{H^{1,1}(\Omega)} \leq M_0 h$ and $\int_\Omega f' dx = 0$, then it holds
	\begin{equation}
	\bigg\|\bigg(\frac{1}{h}\e_h(w - w'), \nabla_h\frac{1}{h}\e(w - w'), \nabla_h^2 (w - w')\bigg)\bigg\|_{H^{1,1}(\Omega)} \leq C_0 \|f - f'\|_{H^{1,1}(\Omega)}.
	\label{InequalityBoundsForDifferenzOfSolutionsForNonlinearEquation}
	\end{equation}
	\label{LemmaExistenceOfU0H}
\end{lem}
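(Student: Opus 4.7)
I would treat \eqref{EquationDefiningU0H} as a Banach fixed-point problem. Splitting $D\tilde{W}(G) = D^2\tilde{W}(0)G + R(G)$ with $R(G) := \int_0^1 (1-s)\,D^3\tilde{W}(sG)[G,G]\,ds$ and using $D^2\tilde{W}(0)G = \operatorname{sym} G$ from Remark~\ref{RemarkdPropertiesOfElasticEnergyDensity}, \eqref{EquationDefiningU0H} becomes
\begin{equation*}
\frac{1}{h^2}\big(\e_h(w),\e_h(\varphi)\big)_{L^2(\Omega)}
= (f,\varphi)_{L^2(\Omega)} - \frac{1}{h^2}\big(R(\nabla_h w),\nabla_h\varphi\big)_{L^2(\Omega)},\quad \varphi\in\mathcal{B}.
\end{equation*}
Let $X_h$ be the space in the statement, equipped with the norm appearing in \eqref{InequalityBoundsForU0H}. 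For $\tw\in X_h$, I would define $\Phi(\tw) := w$ as the solution of the \emph{linear} problem obtained by freezing $R$ at $\tw$, and run the fixed-point argument on the closed ball $B_R := \{w\in X_h : \|w\|_{X_h}\leq R\}$ with $R = 2C_{\mathrm{lin}}M_0 h$ for the $C_{\mathrm{lin}}$ produced below.

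\textbf{Linear theory.} Existence and uniqueness of a weak solution $w\in\mathcal{B}$ of the linearised problem follow from Lax--Milgram: coercivity of $h^{-2}(\e_h(\cdot),\e_h(\cdot))_{L^2}$ on $\mathcal{B}$ is precisely Lemma~\ref{LemmaKornIntegralForm} combined with the constraint $\int_\Omega w\cdot x^\perp\,dx = 0$ encoded in $\mathcal{B}$. To upgrade the weak solution to the $H^{1,1}$-level estimates stated in \eqref{InequalityBoundsForU0H}, I would (i) apply difference quotients in the periodic direction $x_1$ to transfer a $\partial_{x_1}$ derivative onto the data, and (ii) use elliptic regularity for the scaled linear elasticity system in the cross section $S$ with the induced Neumann condition on $\partial S$. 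Iterating yields a constant $C_{\mathrm{lin}}>0$ with
\begin{equation*}
\|w\|_{X_h} \leq C_{\mathrm{lin}}\Big(\|f\|_{H^{1,1}(\Omega)} + \tfrac{1}{h^2}\|R(\nabla_h\tw)\|_{H^{1,1}(\Omega)}\Big).
\end{equation*}

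\textbf{Self-map and contraction.} The smallness $\|f\|_{H^{1,1}}\leq M_0 h$ together with the anisotropic Sobolev embedding for functions in $X_h$ ensures $\|\nabla_h\tw\|_{L^\infty(\Omega)}\leq C\|\tw\|_{X_h}\leq CR\leq \min(\e,h)$ once $R\lesssim h$, so Corollary~\ref{Corollary_L1BoundsD3TildeW} is applicable all along the segment $s\nabla_h\tw$. Using Leibniz's rule to distribute the $\partial_{x_1}$ and $\nabla$ derivatives onto the two $G$-arguments of $D^3\tilde W$ and pairing an $L^\infty$ factor against an $L^2$ one (as in \eqref{globalBoundednessOfD3W(Z)(Y1,Y2,Y3)LInftyVersion}), the integrated representation of $R$ gives
\begin{equation*}
\tfrac{1}{h^2}\|R(\nabla_h\tw)\|_{H^{1,1}(\Omega)} \leq C\, h^{-1}\|\tw\|_{X_h}^2,
\end{equation*}
where the critical factor $h$ supplied by Lemma~\ref{Lemma_DecompD3TildeW} compensates the $h^{-2}$ prefactor. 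Hence $\|\Phi(\tw)\|_{X_h}\leq C_{\mathrm{lin}}(M_0 h + Ch^{-1}R^2)\leq R$ once $M_0$ is small. A parallel computation for $R(\nabla_h\tw)-R(\nabla_h\tw')$, writing the difference as a bilinear integral over $D^3\tilde W$ plus a Lipschitz term controlled by \eqref{Lemma2.6_2}, yields $\|\Phi(\tw)-\Phi(\tw')\|_{X_h}\leq \tfrac{1}{2}\|\tw-\tw'\|_{X_h}$, so Banach's theorem furnishes the unique fixed point $w$ and \eqref{InequalityBoundsForU0H} follows from the linear bound applied at that fixed point.

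\textbf{Lipschitz estimate and main obstacle.} For \eqref{InequalityBoundsForDifferenzOfSolutionsForNonlinearEquation} the difference $w-w'$ of two fixed points satisfies the linear equation with data $f-f'$ plus a nonlinear term bounded by $\tfrac{1}{2}\|w-w'\|_{X_h}$ (by the very Lipschitz computation used for the contraction), which can be absorbed on the left-hand side. The main obstacle I anticipate is precisely the anisotropic $H^{1,1}$-estimate on $R(\nabla_h w)$: the Leibniz expansion of $D^3\tilde W(sG)[G,G]$ under $\partial_{x_1}$ and $\nabla$ produces products of several derivatives of $\nabla_h w$, and one must place each factor in exactly the norm supplied by $X_h$ so that every derivative of $D^3\tilde W$ still carries the decisive factor $h$ granted by Lemma~\ref{Lemma_DecompD3TildeW}. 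The tailored shape of the norm in \eqref{InequalityBoundsForU0H}, combining $\frac{1}{h}\e_h(w)$, $\nabla_h\frac{1}{h}\e(w)$ and $\nabla_h^2 w$, is exactly what makes this bookkeeping close.
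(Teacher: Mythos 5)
Your proposal is correct and follows essentially the same route as the paper: the same Taylor splitting $D\tilde{W}=D^2\tilde{W}(0)+G$ (your $R$), the same Banach fixed point for $w\mapsto L_h^{-1}(f,G_h(w))$, coercivity via the Korn inequality on $\mathcal{B}$, higher regularity via $x_1$-difference quotients plus cross-sectional elliptic theory (Theorem~\ref{TheoremHigherRegularity}), and the decisive $h$-gain from Lemma~\ref{Lemma_DecompD3TildeW}/Corollary~\ref{Corollary_L1BoundsD3TildeW} to close the contraction. The only thing you gloss over is the precise bookkeeping of the scaled dual norms $(L^2_h)'$ on the nonlinear remainder and the rotational Lagrange multiplier $\alpha x^\perp$ needed to pass from the variational formulation on $\mathcal{B}$ to the strong form before invoking elliptic regularity, both of which the paper tracks explicitly.
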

\begin{proof}
	Using a Taylor series expansion for $D\tilde{W}(\nabla_h w)$ we obtain
	\begin{align}
	D\tilde{W}(\nabla_h w) &= D\tilde{W}(0) + D^2\tilde{W}(0)[\nabla_h w] + \int_0^1 (1-\tau) D^3\tilde{W}(\tau\nabla_h w)[\nabla_h w, \nabla_h w]d\tau \notag\\
	& =: D^2\tilde{W}(0)\nabla_h w + G(\nabla_h w). \label{GlobalDefinitionOfNonlinearityG}
	\end{align}
	Thus \eqref{EquationDefiningU0H} is equivalent to
	\begin{equation*}
	\langle L_h w, \varphi\rangle_{\mathcal{B}', \mathcal{B}} :=\frac{1}{h^2}\Big(D^2\tilde{W}(0)\nabla_h w, \nabla_h \varphi\Big)_{L^2(\Omega)}	= (f,\varphi)_{L^2(\Omega)} - \frac{1}{h^2}(G(\nabla_h w), \nabla_h \varphi)_{L^2(\Omega)}.
	\end{equation*} 
	The idea is now to use the contraction mapping principle in order to prove the existence of a solution for \eqref{EquationDefiningU0H}, i.e., with the later equivalence
	\begin{equation*}
	w = \mathcal{G}_{h,f} (w) := L_h^{-1}\Big(f, G_h(w)\Big)
	\end{equation*}
	holds with $G_h(w) := \frac{1}{h^2} G(\nabla_h w)$. Consequently we investigate the mapping properties of $L_h$ and $G_h$.
	
	For $f\in L^2(\Omega;\R^3)$ and $F\in L^2(\Omega;\Rtimes)$ we obtain with the Lemma of Lax-Milgram the existence of a unique solution $w\in \mathcal{B}$ for 
	\begin{equation}
	\langle L_h w, \varphi\rangle_{\mathcal{B}',\mathcal{B}} = (f,\varphi)_{L^2(\Omega)} - (F, \nabla_h \varphi)_{L^2(\Omega)}
	\label{LocalEquationLinearSolutionOperator}
	\end{equation}
	for all $\varphi\in\mathcal{B}$.
	The solution satisfies 
	\begin{equation*}
	\|w\|_{\mathcal{B}} = \bigg\|\frac{1}{h} \e_h(w)\bigg\|_{L^2(\Omega)} \leq C \big( \|f\|_{L^2(\Omega)} + \|F\|_{(L^2_h)'}\big).
	\end{equation*}
	If now $f\in H^{0,k}(\Omega;\R^3)$ and $F\in H^{0,k}(\Omega;\Rtimes)$ for $k=1,2$, it follows by a different quotient argument that $w\in H^{0,k}(\Omega;\R^3)$ holds and
	\begin{equation}
	\bigg\|\frac{1}{h}\e_h(w)\bigg\|_{H^{0,k}(\Omega)} \leq C \Big(\|f\|_{H^{0,k-1}(\Omega)} +  \max_{j=0,\ldots, k}\|\partial_{x_1}^j F\|_{(L^2_h)'}\Big).
	\label{LocalInequalityAnisotropicRegularityBoundRightHandSidefandF}
	\end{equation}
	Using the decomposition $\B \oplus \operatorname{span}\{x\mapsto x^\perp\} = H^1_{(0),per}(\Omega;\R^3)$ it follows that for
	\begin{equation*}
	\alpha := (F, \nabla_h x^\perp)_{L^2(\Omega)} - (f,x^\perp)_{L^2(\Omega)} 
	\end{equation*}
	we have 
	\begin{equation*}
	\frac{1}{h^2}\Big(D^2\tilde{W}(0)\nabla_h w, \nabla_h \varphi\Big)_{L^2(\Omega)} =  (f + \alpha x^\perp,\varphi)_{L^2(\Omega)} - (F, \nabla_h \varphi)_{L^2(\Omega)}
	\end{equation*}
	for all $\varphi \in H^1_{(0),per}(\Omega;\R^3)$. Hence, if $f\in H^1_{per}(\Omega;\R^3)$ and $F\in H^2_{per}(\Omega;\Rtimes)$, then $w$ solves the system 
	\begin{align*}
	\left\{
	\begin{aligned}
	- \frac{1}{h^2} \operatorname{div}_h (D^2\tilde{W}(0)\nabla_h w) &= f + \alpha x^\perp - \operatorname{div}_h F &&\quad\text{in } \Omega\\
	D^2\tilde{W}(0)[\nabla_h w]\nu\Big|_{\partial S} &= h^2 \tr{\partial \Omega}(F)\nu\Big|_{\partial S} &&\quad\text{in }  \partial \Omega
	\end{aligned}
	\right.
	\end{align*}
	in a weak sense. Thus with elliptic regularity theory it follows $w\in H^3_{per}(\Omega;\R^3)\cap \mathcal{B}$. By Theorem \ref{TheoremHigherRegularity} in the appendix, we obtain
	\begin{align*}
	\bigg\|\bigg(\frac{1}{h}\e_h(w), \nabla &\frac{1}{h}\e_h(w), \nabla_h^2 w\bigg)\bigg\|_{H^1(\Omega)}  \leq C\bigg(h^2\|(f,\operatorname{div}_h F)\|_{H^1(\Omega)} + \|f\|_{H^{0,1}(\Omega)}\\
	& + \max_{j=0,1,2}\|\partial_{x_1}^j F\|_{(L^2_h)'} + \Big\|h \tr{\partial \Omega}(F) \Big\|_{L^2(0,L; H^\frac{3}{2}(\partial S))\cap H^1(0,L;H^\frac{1}{2}(\partial S))} \bigg),\notag
	\end{align*}
	where we have exploited
	\begin{equation*}
	h^2 |\alpha| \leq Ch^2 \|f\|_{L^2(\Omega)} + Ch \|F\|_{(L^2_h)'}.
	\end{equation*}
	Using that $\tr{\partial S}\colon H^2(S) \to H^{\frac{3}{2}}(\partial S)$ is a bounded operator we obtain
	\begin{align*}
	h \Big\|\tr{\partial \Omega}(F) \Big\|_{L^2(0,L; H^\frac{3}{2}(\partial S))\cap H^1(0,L;H^\frac{1}{2}(\partial S))} &\leq Ch \Big(\|F\|_{H^{1,1}(\Omega)} + \max_{k=0,1,2}\|\nabla_{x'}^k F\|_{L^2(\Omega)} \Big)\\
	& \leq C\Big(\max_{j=0,1,2}\|\partial_{x_1}^j F\|_{(L^2_h)'} + h^2 \|\nabla_h F\|_{H^1(\Omega)}\Big)
	\end{align*}
	because of
	\begin{equation*}
	\|F\|_{H^1(\Omega)} \leq \|F\|_{H^{0,1}(\Omega)} + \|\nabla_{x'} F\|_{L^2(\Omega)}\;\text{ and }\; \|F\|_{L^2(\Omega)} \leq \frac{1}{h} \|F\|_{(L^2_h)'}.
	\end{equation*}
	Thus we deduce for some $C_L > 0$
	\begin{align}
	\bigg\|\bigg(\frac{1}{h}\e_h(w), \nabla &\frac{1}{h}\e_h(w), \nabla_h^2 w\bigg)\bigg\|_{H^1(\Omega)}\notag \\
	& \leq C_L \Big(h^2\|(f, \nabla_h F)\|_{H^1(\Omega)} + \|f\|_{H^{0,1}(\Omega)} + \max_{j=0,1,2}\|\partial_{x_1}^j F\|_{(L^2_h)'}\Big).
	\label{LocalInequalityHigherRegularityRightHandSidefandF}
	\end{align}
	We define $\mathcal{X}_h := H^3_{per}(\Omega;\R^3)\cap \mathcal{B}$ and $\mathcal{Y}_h := H^1_{per}(\Omega;\R^3) \times H^2_{per}(\Omega;\Rtimes)$ normed via 
	\begin{gather*}
	\|g\|_{\mathcal{X}_h} := \bigg\|\bigg(\frac{1}{h}\e_h(g), \nabla\frac{1}{h}\e_h(g), \nabla_h^2 g\bigg)\bigg\|_{H^1(\Omega)}\\
	\|(f,F)\|_{\mathcal{Y}_h} := h^2\|(f, \nabla_h F)\|_{H^1(\Omega)} + \|f\|_{H^{0,1}(\Omega)} + \max_{j=0,1,2}\|\partial_{x_1}^j F\|_{(L^2_h)'}.
	\end{gather*}
	With this $L_h^{-1}\colon \mathcal{Y}_h \to \mathcal{X}_h$ is a bilinear, bijective and bounded operator, mapping a tuple $(f,F)\in \mathcal{Y}_h$ to the corresponding solution $w\in\mathcal{X}_h$ of \eqref{LocalEquationLinearSolutionOperator}. In order to close the proof we have to show that $G_h$ is a  contraction  with respect to the relevant norms.
	
	In a first step we assume that $w_i\in\mathcal{X}_h$ with 
	\begin{equation*}
	\|w_i\|_{\mathcal{X}_h} \leq C_0 M_1 h
	\end{equation*}
	for $i=1,2$ and $M_1 > 0$ to be chosen later. Then
	\begin{align*}
	\|G_h (w_1) & - G_h(w_2)\|_{(L^2_h)'} = \bigg\|\frac{1}{h^2} \int_0^1 (1-\tau) \Big(D^3\tilde{W}(\tau\nabla_h w_1)[\nabla_h w_1 - \nabla_h w_2, \nabla_h w_1]\\
	&\quad + D^3\tilde{W}(\tau\nabla_h w_2)[\nabla_h w_1 - \nabla_h w_2, \nabla_h w_2]\\
	&\quad + \big(D^3\tilde{W}(\tau\nabla_h w_1) - D^3\tilde{W}(\tau\nabla_h w_2)\big)[\nabla_h w_1, \nabla_h w_2]\Big) d\tau \bigg\|_{(L^2_h)'}\\
	& \leq CM_1 \|\nabla_h (w_1 - w_2)\|_{H^1_h(\Omega)}\\
	& \quad + \bigg\|\frac{1}{h^2} \int_0^1(1-\tau) \int_0^1 Q(\tau, t, w_1, w_2) dt [\tau(\nabla_h w_1 - \nabla_h w_2), \nabla_h w_1, \nabla_h w_2]d\tau\bigg\|_{(L^2_h)'}\\
	& \leq C M_1 \bigg\|\frac{1}{h}\e_h(w_1 - w_2)\bigg\|_{\mathcal{X}_h},
	\end{align*}
	where we used Corollary \ref{Corollary_L1BoundsD3TildeW}, $\|\nabla_h w_j\|_{H^1_h(\Omega)}\leq C\|w_j\|_{\mathcal{X}_h}$ and the boundedness of
	\begin{equation*}
	Q(\tau, t, w_1, w_2) := D^4\tilde{W}(t\tau \nabla_h w_1 + (1-t)\tau \nabla_h w_2).
	\end{equation*}
	The definition of $G$ implies that for $k=1,2,3$ it holds
	\begin{align}
	\partial_{x_k} G(\nabla_h w) &= D^2\tilde{W}(\nabla_h w)[\nabla_h \partial_{x_k} w] - D^2\tilde{W}(0) [\nabla_h \partial_{x_k} w]\label{LocalEqualityFirstDerivativeOfG} \\
	& = \int_0^1 D^3\tilde{W}(\tau\nabla_h w) [\nabla_h w, \nabla_h\partial_{x_k} w] d\tau.\notag
	\end{align}
	Hence, analogously as above
	\begin{align*}
	\|\partial_{x_k} &(G_h(w_1) - G_h(w_2)\|_{(L^2_h)'} \leq \bigg\| \frac{1}{h^2} \int_0^1 D^3\tilde{W}(\tau\nabla_h w_1) [\nabla_h (w_1 - w_2), \nabla_h \partial_{x_k} w_1]d\tau\bigg\|_{(L^2_h)'}\\
	& \quad + \bigg\| \frac{1}{h^2} \int_0^1 D^3\tilde{W}(\tau\nabla_h w_2) [\nabla_h w_2, \nabla_h \partial_{x_k} (w_1 - w_2)]d\tau\bigg\|_{(L^2_h)'}\\
	& \quad + \bigg\| \frac{1}{h^2} \int_0^1 \big(D^3\tilde{W}(\tau\nabla_h w_1) - D^3\tilde{W}(\tau\nabla_h w_2)\big) [\nabla_h w_2, \nabla_h \partial_{x_k} w_1]d\tau \bigg\|_{(L^2_h)'}\\
	& \leq \frac{C}{h}\|\nabla_h (w_1 - w_2)\|_{H^2_h(\Omega)} \|\nabla_h \partial_{x_k} w_1\|_{L^2_h(\Omega)} + \frac{C}{h} \|\nabla_h w_2\|_{H^2_h(\Omega)} \|\nabla_h \partial_{x_k}(w_1 - w_2)\|_{(L^2_h)'}\\
	&\quad + CM_1 \bigg\|\frac{1}{h}\e_h(w_1 - w_2)\bigg\|_{H^1_h(\Omega)}\\
	& \leq CM_1 \|w_1 - w_2\|_{\mathcal{X}_h}
	\end{align*}
	as
	\begin{equation*}
	\|\nabla_h \partial_{x_k} \varphi\|_{L^2_h(\Omega)} \leq \|\nabla_h \partial_{x_k} \varphi\|_{L^2(\Omega)} + \bigg\|\frac{1}{h} \e_h(\partial_{x_k} \varphi)\bigg\|_{L^2(\Omega)} \leq \bigg\|\bigg(\nabla \frac{1}{h}\e_h(\varphi), \nabla_h^2 \varphi \bigg)\bigg\|_{L^2(\Omega)} \leq \|\varphi\|_{\mathcal{X}_h}
	\end{equation*}
	for $\varphi = w_1$ and $\varphi = w_1 - w_2$. With the aid of \eqref{LocalEqualityFirstDerivativeOfG} it follows for $j, k=1, 2, 3$
	\begin{align*}
	\partial_{x_j}\partial_{x_k} G(\nabla_h w) &= D^2\tilde{W}(\nabla_h u_h)[\nabla_h\partial_{x_j}\partial_{x_k} w] - D^2\tilde{W}(0)[\nabla_h\partial_{x_j}\partial_{x_k} w] \\
	& \quad + D^3\tilde{W}(\nabla_h w) [\nabla_h \partial_{x_j} w, \nabla_h \partial_{x_k} w]\\
	& = \int_0^1 D^3\tilde{W}(\tau\nabla_h w)[\nabla_h w, \nabla_h\partial_{x_j}\partial_{x_k} w] d\tau + D^3\tilde{W}(\nabla_h w) [\nabla_h \partial_{x_j} w, \nabla_h \partial_{x_k} w].
	\end{align*}
	Thus we obtain in the same manner as above 
	\begin{equation*}
	\|\partial_{x_j}\partial_{x_k} (G_h(w_1) - G_h(w_2))\|_{(L^2_h)'}
%	& \leq \bigg\| \frac{1}{h^2} \int_0^1 D^3\tilde{W}(\tau\nabla_h w_1) [\nabla_h (w_1 - w_2), \nabla_h \partial_{x_j}\partial_{x_k} w_1]d\tau\bigg\|_{(L^2_h)'}\\
%	& \quad + \bigg\| \frac{1}{h^2} \int_0^1 D^3\tilde{W}(\tau\nabla_h w_2) [\nabla_h w_2, \nabla_h \partial_{x_k}\partial_{x_j} (w_1 - w_2)]d\tau\bigg\|_{(L^2_h)'}\\
%	& \quad + \bigg\| \frac{1}{h^2} \int_0^1 \big(D^3\tilde{W}(\tau\nabla_h w_1) - D^3\tilde{W}(\tau\nabla_h w_2)\big) [\nabla_h w_2, \nabla_h \partial_{x_k}\partial_{x_j} w_1]d\tau\bigg\|_{(L^2_h)'}\\
%	& \; + \bigg\| \frac{1}{h^2} D^3\tilde{W}(\nabla_h w_1) [\nabla_h \partial_{x_j} (w_1 - w_2), \nabla_h \partial_{x_k} w_1]\bigg\|_{(L^2_h)'}\\
%	& \quad + \bigg\| \frac{1}{h^2} D^3\tilde{W}(\nabla_h w_2) [\nabla_h\partial_{x_j} w_2, \nabla_h \partial_{x_k} (w_1 - w_2)]\bigg\|_{(L^2_h)'}\\
%	& \quad + \bigg\| \frac{1}{h^2} \int_0^1 \big(D^3\tilde{W}(\tau\nabla_h w_1) - D^3\tilde{W}(\tau\nabla_h w_2)\big) [\nabla_h\partial_{x_j} w_2, \nabla_h \partial_{x_k} w_1]\bigg\|_{(L^2_h)'}\\
%	& \leq \frac{C}{h}\|\nabla_h (w_1 - w_2)\|_{H^2_h(\Omega)} \|\nabla_h\partial_{x_j}\partial_{x_k} w_1\|_{L^2_h(\Omega)}\\
%	& \quad + \frac{C}{h} \|\nabla_h w_2\|_{H^2_h(\Omega)} \|\nabla_h \partial_{x_k}\partial_{x_j}(w_1 - w_2)\|_{L^2_h(\Omega)}\\
%	&\quad + \frac{C}{h} \max_{i=1,2} \|\nabla_h \partial_{x_j} (w_1 - w_2)\|_{H^1_h(\Omega)} \|\nabla_h \partial_{x_k} w_i\|_{H^1_h(\Omega)}\\
%	& \quad + \frac{C}{h^2} \|\nabla_h(w_1 - w_2)\|_{H^2_h(\Omega)} \|\nabla_h \partial_{x_j} w_2\|_{H^1_h(\Omega)} \|\nabla_h \partial_{x_k} w_1\|_{H^1_h(\Omega)}\\
	 \leq CM_1 \|w_1 - w_2\|_{\mathcal{X}_h}.
	\end{equation*}
	The fact that $h^2\|\nabla_h F\|_{H^1(\Omega)} \leq h \|\nabla F\|_{H^1(\Omega)}$ and $\|F\|_{L^2(\Omega)} \leq \frac{1}{h}\|F\|_{(L^2_h)'}$ implies with the later estimates that for $M_1\in (0,1]$ small enough
	\begin{align*}
	\mathcal{G}_{h,f}\colon\overline{B_{CM_1 h}(0)} \subset\mathcal{X}_h \to\mathcal{X}_h
	\end{align*}
	is a $\frac{1}{2}$-contraction. The self-mapping property of $\mathcal{G}_{h,f}$ follows because of
	\begin{equation*}
	\|\mathcal{G}_{h,f} (0)\|_{\mathcal{X}_h} = \|L^{-1}(f,0)\|_{\mathcal{X}_h} \leq C_L \|(f,0)\|_{\mathcal{Y}_h} \leq C_L \|f\|_{H^{1,1}(\Omega)} \leq C_L M_0 h.
	\end{equation*}
	Thus we can choose $M_0 > 0$ so small that $C_L M_0 h \leq \frac{C M_1}{2}$. Then we obtain with the $\frac{1}{2}$-contraction property of $\mathcal{G}_{h,f}$ for $w\in \overline{B_{CM_1 h}(0)}$
	\begin{align*}
	\|\mathcal{G}_{h,f}(w)\|_{\mathcal{X}_h} \leq \|\mathcal{G}_{h,f}(w) - \mathcal{G}_{h,f}(0)\|_{\mathcal{X}_h} + \|\mathcal{G}_{h,f}(0)\|_{\mathcal{X}_h} \leq \frac{1}{2} \|w\|_{\mathcal{X}_h} + C_L M_0 h \leq C M_1 h. 
	\end{align*}
	Therefore \eqref{InequalityBoundsForU0H} and \eqref{InequalityBoundsForDifferenzOfSolutionsForNonlinearEquation} hold with the $H^{1,1}(\Omega)$-norm on the left hand side replaced by the $\mathcal{X}_h$-norm.
	
	Using the decomposition $\B \oplus \operatorname{span}\{x\mapsto x^\perp\} = H^1_{(0),per}(\Omega;\R^3)$ it follows that for
	\begin{equation*}
	\rho := \frac{1}{\mu(S) h^2} \Big(D\tilde{W}(\nabla_h w), \nabla_h x^\perp\Big)_{L^2(\Omega)}
	\end{equation*}
	we have 
	\begin{equation*}
	\frac{1}{h^2}\Big(D\tilde{W}(\nabla_h w), \nabla_h \varphi \Big)_{L^2(\Omega)} = (f - \rho x^\perp,\varphi)_{L^2(\Omega)}
	\end{equation*}
	for all $\varphi \in H^1_{(0),per}(\Omega;\R^3)$. If now $f\in H^{1,1}_{per}(\Omega;\R^3)$ we obtain, with a difference quotient argument, that $w\in H^3_{per}(\Omega;\R^3)\cap\mathcal{B}$ satisfies
	\begin{equation*}
	\frac{1}{h^2}\Big(D^2\tilde{W}(\nabla_h w) \nabla_h\partial_{x_1} w, \nabla_h \varphi\Big)_{L^2(\Omega)} = (\partial_{x_1} f, \varphi)_{L^2(\Omega)}.
	\end{equation*}
	for all $\varphi\in H^1_{(0),per}(\Omega;\R^3)$. Thus with Theorem \ref{TheoremHigherRegularity} the claimed inequalities follow.
\end{proof}
We define the initial values for the analytical problem as 
\begin{equation*}
u_{2+j, h} :=h^{2} \begin{pmatrix} 
0 \\ v^{2+j}_2 \\ v^{2+j}_3
\end{pmatrix} + h^{3} \begin{pmatrix}
-x_2 \partial_{x_1}v^{2+j}_{2} - x_3 \partial_{x_1}v^{2+j}_{3} \\ 0 \\ 0
\end{pmatrix}
\end{equation*}
for $j=1,2$ and $v^{2+j} = \partial_t^{2+j} v|_{t=0}$ as above.
\begin{lem}
	Let $\tilde{u}_h$ be as in \eqref{GlobalDefinitionOfTildeUH}, $\tilde{u}_{j,h}$ for $j=0,1,2$ as in \eqref{GlobalDefinitionOfInitialValuesOfTildeUH}, $u_{3,h}$, $u_{4,h}$ and $f^h$ be as above. Then for sufficiently small $h_0\in (0,1]$ and $M>0$ there exist solutions $(u_{0,h}, u_{1,h}, u_{2,h})$ of
	\begin{align}
	\frac{1}{h^2} \Big(D\tilde{W}(\nabla_h u_{0,h}), \nabla_h\varphi\Big)_{L^2(\Omega)} &= (h^2 f_h|_{t=0}, \varphi)_{L^2(\Omega)} - (u_{2,h}, \varphi)_{L^2(\Omega)}\label{GlobalDefiningEquationForU0H}\\
	\frac{1}{h^2} \Big(D^2\tilde{W}(\nabla_h u_{0,h})\nabla_h u_{1,h}, \nabla_h\varphi\Big)_{L^2(\Omega)} &= (h^2 \partial_t f|_{t=0}, \varphi)_{L^2(\Omega)} - (u_{3,h}, \varphi)_{L^2(\Omega)} \label{GlobalDefiningEquationForU1H}
	\end{align}
	and
	\begin{align}
	\frac{1}{h^2} \Big(D^2\tilde{W}&(\nabla_h u_{0,h}) \nabla_h u_{2,h}, \nabla_h \varphi\Big)_{L^2(\Omega)} = (h^2\partial_t^2 f|_{t=0} - u_{4,h})_{L^2(\Omega)}\label{GlobalDefiningEquationForU3H}\\
	& - \frac{1}{h^2}\Big(D^3\tilde{W}(\nabla_h u_{0,h})[\nabla_h u_{1,h}, \nabla_h u_{1,h}], \nabla_h\varphi\Big)_{L^2(\Omega)} - \frac{\gamma_h}{h^3} \Big(D^2\tilde{W}(\nabla_h u_{0,h})P,\nabla_h \varphi\Big)_{L^2(\Omega)}\notag
	\end{align}
	for all $\varphi\in\B$, where 
	\begin{equation*}
	\gamma_h(u_{0,h}) := \frac{1}{\mu(S)h^3} \Big(D\tilde{W}(\nabla_h u_{0,h}), P\Big)_{L^2(\Omega)}
	\end{equation*}
	and 
	\begin{equation*}
	P:= \begin{pmatrix}
	0 & 0 & 0\\
	0 & 0 & -1\\
	0 & 1 & 0
	\end{pmatrix}.
	\end{equation*}
	The solution satisfies
	\begin{gather}
	\bigg\|\bigg(\frac{1}{h}\e_h (u_{0,h}), \nabla \frac{1}{h}\e_h (u_{0,h}), \nabla_h^2 u_{0,h}\bigg)\bigg\|_{H^{1,1}(\Omega)} \leq Ch^2 \label{LocalInequalityForU0H}\\
	\max_{j=1,2} \bigg\|\bigg(\frac{1}{h}\e_h (u_{j,h}), \nabla \frac{1}{h}\e_h (u_{j,h}), \nabla_h^2  u_{j,h}\bigg)\bigg\|_{H^{2-j}(\Omega)}\leq Ch^2 \label{LocalInequalityForU1HandU2H}
	\end{gather}
	and $u_{k,h}\in \mathcal{B}$ for $k=0,1,2$. Moreover we have
	\begin{equation}
	\max_{j=0,1,2} \bigg\|\bigg(\frac{1}{h}\e_h(u_{j,h}) - \frac{1}{h}\e_h(\tilde{u}_{j,h})\bigg)\bigg\|_{L^2(\Omega)} \leq 
	\begin{cases}
	Ch^3 \quad &\text{ if } j=0,1,\\
	Ch^2 \quad &\text{ if } j=2
	\end{cases}
	\label{GlobalInequalityBoundForDifferenzeOfInitialValues}
	\end{equation}
	for all $h\in (0,h_0]$ and $C>0$ independent of $h$.
	\label{LemmaExistenceOfU1U2andInequality}
\end{lem}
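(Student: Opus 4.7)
The plan is to solve the coupled system \eqref{GlobalDefiningEquationForU0H}--\eqref{GlobalDefiningEquationForU3H} by a fixed-point iteration on the triple $(u_{0,h}, u_{1,h}, u_{2,h})$, using Lemma \ref{LemmaExistenceOfU0H} for the nonlinear equation and the linear solution operator built in its proof for the two remaining equations. Starting from the ansatz guess $u_{2,h}^{(0)} := \tilde{u}_{2,h}$, the right-hand side $h^2 f^h|_{t=0} - u_{2,h}^{(0)}$ of \eqref{GlobalDefiningEquationForU0H} lies in $H^{1,1}_{per}(\Omega;\R^3)$ with norm of order $h^2$, by \eqref{lokalAssumptionOfSmallnessForRightHandSideG}, \eqref{lokalAssumptionOfSmallnessForInitialData} and the explicit formula \eqref{GlobalDefinitionOfInitialValuesOfTildeUH}. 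For $h \le h_0$ small enough this is below $M_0 h$, so Lemma \ref{LemmaExistenceOfU0H} produces $u_{0,h} \in \mathcal{X}_h$ with $\|u_{0,h}\|_{\mathcal{X}_h} \lesssim h^2$, i.e.\ \eqref{LocalInequalityForU0H}.

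For such $u_{0,h}$ the bilinear form $h^{-2}(D^2\tilde{W}(\nabla_h u_{0,h})\nabla_h\cdot, \nabla_h\cdot)_{L^2(\Omega)}$ is a small perturbation of its counterpart with $D^2\tilde{W}(0) = \operatorname{sym}$, since $\|\nabla_h u_{0,h}\|_{L^\infty}$ is small by Sobolev embedding of $\mathcal{X}_h$ and the $O(h^2)$-bound. It is therefore coercive on $\mathcal{B}$ via Korn \eqref{Korn}, and the anisotropic elliptic estimate \eqref{LocalInequalityHigherRegularityRightHandSidefandF} carries over essentially unchanged. This solves \eqref{GlobalDefiningEquationForU1H} for $u_{1,h}$, and after controlling the quadratic data $D^3\tilde{W}(\nabla_h u_{0,h})[\nabla_h u_{1,h}, \nabla_h u_{1,h}]$ by Corollary \ref{Corollary_L1BoundsD3TildeW}, it solves \eqref{GlobalDefiningEquationForU3H} for an updated $u_{2,h}$, yielding \eqref{LocalInequalityForU1HandU2H}. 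The coefficient $\gamma_h$ is tuned precisely so that the right-hand side of \eqref{GlobalDefiningEquationForU3H} is orthogonal to the rotational direction $P$, which is exactly the compatibility condition required for solvability on $\mathcal{B}$.

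To close the loop I would view $u_{2,h} \mapsto u_{2,h}'$ as a self-map of a ball of radius $C h^2$ in $L^2(\Omega;\R^3)$ and exploit the Lipschitz estimate \eqref{InequalityBoundsForDifferenzOfSolutionsForNonlinearEquation} together with the linear stability of the two subsequent solves; taking $h_0$ small enough yields a strict contraction and hence a unique fixed point. The difference estimates \eqref{GlobalInequalityBoundForDifferenzeOfInitialValues} follow from the observation that $\tilde{u}_{j,h}$ satisfies the same equations as $u_{j,h}$ but with $D^2\tilde{W}(0)$ replacing $D^2\tilde{W}(\nabla_h u_{0,h})$, modulo the explicit asymptotic residuals from Section 3.1, of size $h^3$ for $j = 0, 1$ and $h^2$ for $j = 2$ (the quadratic $D^3\tilde{W}$-contribution is absent from the linearised ansatz). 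Subtracting the two systems, estimating the differences of operators via Corollary \ref{Corollary_L1BoundsD3TildeW}, and invoking the same linear stability propagates these rates to $h^{-1}\e_h(u_{j,h} - \tilde{u}_{j,h})$ in $L^2(\Omega)$.

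The main obstacle I anticipate is keeping the compatibility in the rotational direction consistent across the iteration: $\gamma_h$ is defined through $u_{0,h}$, which itself depends on $u_{2,h}$, so verifying that this coupling does not spoil the contraction while still giving the sharp $h^3$-rate in \eqref{GlobalInequalityBoundForDifferenzeOfInitialValues} for $j=0,1$ is the delicate point. That rate represents a gain of a full factor $h$ over the naive estimate inherited from $\|u_{j,h}\|_{\mathcal{X}_h} \lesssim h^2$, and I expect it will require a careful separate treatment of the rotational and translational components of the error (e.g.\ testing the subtracted equations against $x^\perp$ to isolate the $\gamma_h$-contribution) rather than relying purely on the abstract stability from the proof of Lemma \ref{LemmaExistenceOfU0H}.
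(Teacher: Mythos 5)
Your high-level strategy matches the paper's: a contraction-mapping argument built on the solution operator constructed in Lemma~\ref{LemmaExistenceOfU0H}, followed by a difference estimate against the ansatz with the residuals $r_h$, $r_{N,h}$ doing the bookkeeping. But there are two substantive problems.

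First, the function space for the fixed point is too weak. You propose to iterate $u_{2,h}\mapsto u_{2,h}'$ on a ball of radius $Ch^2$ in $L^2(\Omega;\R^3)$. To feed $f = h^2 f^h|_{t=0} - u_{2,h}$ into Lemma~\ref{LemmaExistenceOfU0H} you need $\|f\|_{H^{1,1}(\Omega)}\le M_0 h$, and the Lipschitz estimate \eqref{InequalityBoundsForDifferenzOfSolutionsForNonlinearEquation} you intend to use also measures $\|u_{2,h}-u_{2,h}'\|$ in $H^{1,1}(\Omega)$, not $L^2$. A self-map of an $L^2$ ball does not even make the composition well defined. The paper instead iterates on the full triple $(u_0,u_1,u_2)$ in the scaled space $\mathcal{X}_h\times\mathcal{W}_h$, where the $\mathcal{W}_h$-component places $u_{2,h}\in H^2_{per}(\Omega;\R^3)\cap\mathcal{B}$ with control on $\frac1h\e_h$, $\nabla\frac1h\e_h$, and $\nabla_h^2$; this provides exactly the regularity needed both to close the contraction and to read off \eqref{LocalInequalityForU0H}--\eqref{LocalInequalityForU1HandU2H} directly from the norms. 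If you want to iterate on $u_{2,h}$ alone, you would still have to work in (at least) $H^{1,1}_{per}\cap\mathcal{B}$, and you would need a separate argument that the map $u_{2,h}\mapsto u_{0,h}\mapsto u_{1,h}\mapsto u_{2,h}'$ is Lipschitz with small constant into that space, which essentially reproduces the paper's $\mathcal{Z}_h/\mathcal{W}_h$ machinery.

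Second, your reading of $\gamma_h$ as a Lagrange multiplier enforcing a solvability condition on $\mathcal{B}$ is incorrect. The bilinear form $\frac1{h^2}(D^2\tilde W(\nabla_h u_{0,h})\nabla_h\cdot,\nabla_h\cdot)_{L^2}$ is coercive on $\mathcal{B}$ by Korn \eqref{Korn} (and a perturbation argument since $\|\nabla_h u_{0,h}\|_{L^\infty}$ is small), so the Lax--Milgram solve requires no orthogonality of the right-hand side; any bounded functional on $\mathcal{B}$ is admissible. In this lemma $\gamma_h(u_{0,h})$ is simply a prescribed scalar, depending implicitly on $u_{0,h}$, that forms part of the right-hand side; the paper estimates $|\gamma_h(u_{0,h})|\le Ch^2$ and folds the corresponding term into the nonlinearity $\mathcal{Q}_{2,h}$ of the contraction. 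Its role as a ``rotational compensation'' only appears later, in the proof of the main theorem, where $\bar u_{2,h}$ is corrected by $\gamma_2^h\, x^\perp$ so that the identities hold on all of $H^1_{per}(\Omega;\R^3)$. Also note that the $\gamma_h$-term is exactly what forces the $j=2$ rate in \eqref{GlobalInequalityBoundForDifferenzeOfInitialValues} down to $Ch^2$; your proposal should make this visible rather than expecting a uniform $h^3$ gain.

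On the difference estimates, the paper derives the variational identities for $u_{j,h}-\tilde u_{j,h}$ (with the residuals $r_h$, $r_{N,h}$ and the $D^2\tilde W(\nabla_h u_{0,h})-D^2\tilde W(0)$ commutator terms on the right-hand side) and then simply tests with $\varphi=u_{j,h}-\tilde u_{j,h}\in\mathcal{B}$ and absorbs. It does not test against $x^\perp$; since all $w_{j,h}=u_{j,h}-\tilde u_{j,h}$ lie in $\mathcal{B}$, their rotational moment vanishes identically. Your instinct that the rotational component needs separate care is right for the main theorem (and indeed that is where $\gamma_2^h$, $\gamma_3^h$, $\gamma_4^h$ are introduced), but it is not what drives the $h^3$ rate in this lemma.
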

\begin{proof}
	We can equivalently formulate \eqref{GlobalDefiningEquationForU0H}--\eqref{GlobalDefiningEquationForU3H} via
	\begin{equation}
	\frac{1}{h^2}\Big(D^2\tilde{W}(\nabla_h u_{0,h})\nabla_h u_{1,h}, \nabla_h \varphi\Big)_{L^2(\Omega)} = (h^2\partial_t f_h|_{t=0}, \varphi)_{L^2(\Omega)} - (u_{3,h}, \varphi)_{L^2(\Omega)}
	\label{LocalEquivalentFormulationForU2H}
	\end{equation}
	and
	\begin{align}
	\frac{1}{h^2}\Big(D^2\tilde{W}(\nabla_h u_{0,h})\nabla_h u_{2,h}&, \nabla_h \varphi\Big)_{L^2(\Omega)} = (h^2\partial_t^2 f_h|_{t=0} - u_{4,h}, \varphi)_{L^2(\Omega)}\notag\\
	&-\frac{1}{h^2} \Big(D^3\tilde{W}(\nabla_h u_{0,h}) [\nabla_h u_{1,h}, \nabla_h u_{1,h}], \nabla_h \varphi\Big)_{L^2(\Omega)} \notag\\
	&- \frac{\gamma_h(u_{0,h})}{h^3} \Big(D^2\tilde{W}(\nabla_h u_{0,h}) P,\nabla_h \varphi\Big)_{L^2(\Omega)}\label{LocalEquivalentFormulationForU3H}
	\end{align}
	for all $\varphi\in\mathcal{B}$, where $u_{0,h} = \mathcal{G}_{h,f}(u_{0,h})$ is the solution of \eqref{GlobalDefiningEquationForU0H} with $f = h^2 f^h - u_{2,h}$. Defining
	\begin{equation*}
	\mathcal{G}_{0,h}(u_{2,h}) := \mathcal{G}_{h,f}(u_{0,h})
	\end{equation*} 
	and deploying \eqref{InequalityBoundsForDifferenzOfSolutionsForNonlinearEquation} we obtain for $u_{2,h}$, $u'_{2,h} \in H^{1,1}(\Omega;\R^3)$
	\begin{equation}
	\max_{k=0,1}\Big\|\partial_{x_1}^k \big(\mathcal{G}_{0,h}(u_{2,h}) - \mathcal{G}_{0,h}(u'_{2,h})\big)\Big\|_{\mathcal{X}_h} \leq C_0 \|u_{2,h} - u'_{2,h}\|_{H^{1,1}(\Omega)}
	\label{LocalInequalityBoundednessOfU1byU2}
	\end{equation}
	if $\|u_{2,h}\|_{H^{1,1}(\Omega)} \leq \frac{1}{2}M_0 h$, $\|u'_{2,h}\|_{H^{1,1}(\Omega)} \leq \frac{1}{2}M_0 h$ and $h^2\|f^h\|_{H^{1,1}(\Omega)}\leq \frac{1}{2}M_0 h$. This can always be achieved if $h_0\in (0,1]$ is small enough and $u_{2,h}$, $u'_{2,h}$ are of order $h^2$.
	
	Using the definition of $L_h$ it follows that \eqref{LocalEquivalentFormulationForU2H}--\eqref{LocalEquivalentFormulationForU3H} are equivalent to
	\begin{equation*}
	\langle L_h u_{1,h}, \varphi\rangle_{\mathcal{B}', \mathcal{B}} = (h^2\partial_t f_h|_{t=0} - u_{3,h}, \varphi)_{L^2(\Omega)} - \frac{1}{h^2} \Big(DG(\nabla_h u_{0,h}) \nabla_h u_{1,h}, \nabla_h \varphi\Big)_{L^2(\Omega)}
	\end{equation*}
	and
	\begin{align*}
	\langle L_h &u_{2,h}, \varphi\rangle_{\mathcal{B}', \mathcal{B}} = (h^2\partial^2_t f_h|_{t=0} - u_{4,h}, \varphi)_{L^2(\Omega)} - \frac{1}{h^2} \Big(DG(\nabla_h u_{0,h}) \nabla_h u_{2,h}, \nabla_h \varphi\Big)_{L^2(\Omega)}\\
	& - \frac{1}{h^2} \Big(D^3\tilde{W}(\nabla_h u_{0,h})[\nabla_h u_{1,h}, \nabla_h u_{1,h}], \nabla_h \varphi \Big)_{L^2(\Omega)} - \frac{\gamma_h(u_{0,h})}{h^3} \Big(D^2\tilde{W}(\nabla_h u_{0,h})P,\nabla_h \varphi\Big)_{L^2(\Omega)}
	\end{align*}
	for all $\varphi\in\mathcal{B}$. Defining now the relevant function spaces by
	\begin{align*}
	\mathcal{D}_h &:= H^2_{per}(\Omega;\Rtimes)\times H^1_{per}(\Omega;\Rtimes),
	\quad \mathcal{Z}_h := H^1_{per}(\Omega;\R^3)\times L^2 (\Omega;\R^3)\times \mathcal{D}_h,\\
          \mathcal{W}_h &:= \mathcal{X}_h \times \Big(H^2_{per}(\Omega;\R^3) \cap \mathcal{B}\Big)
          	\end{align*}
	with the respective norms defined by
	\begin{align*}
	\|(F_1, F_2)\|_{\mathcal{D}_h} &:= \max_{i=1,2} \Big(h^2\|\nabla_h F_i\|_{H^{2-i}(\Omega)} + \max_{\sigma =0,\ldots, 3-i}\|\partial_{x_1}^\sigma F_i\|_{(L^2_h)'}\Big),\\
	\|(f_1,f_2, F_1, F_2)\|_{\mathcal{Z}_h} & := \max_{i=1,2} \Big(h^2\|(f_i, \nabla_h F_i)\|_{H^{2-i}(\Omega)} + \|f_i\|_{H^{0,2-i}(\Omega)} + \max_{\sigma =0,\ldots, 3-i}\|\partial_{x_1}^\sigma F_i\|_{(L^2_h)'}\Big),\\
	\|(g_1, g_2)\|_{\mathcal{W}_h} &:= \max_{i=1,2} \bigg\|\bigg(\frac{1}{h}\e_h(g_i), \nabla\frac{1}{h}\e_h(g_i),\nabla_h^2 g_i\bigg)\bigg\|_{H^{2-i}(\Omega)}.
                                                     	\end{align*}
	With this we define the linear operator $\mathcal{L}_h^{-1}\colon\mathcal{Z}_h \to\mathcal{W}_h$ by mapping $(f_1,f_2, F_1, F_2)$ to the solution $(w_1, w_2)$ of
	\begin{equation}
	\langle L_h w_i, \varphi\rangle_{\mathcal{B}',\mathcal{B}} = (f_i,\varphi)_{L^2(\Omega)} - (F_i, \nabla_h \varphi)_{L^2(\Omega)}
	\end{equation}
	for $i=1,2$. Then due to \eqref{LocalInequalityHigherRegularityRightHandSidefandF}, Theorem \ref{TheoremHigherRegularity} and \eqref{LocalInequalityAnisotropicRegularityBoundRightHandSidefandF} we obtain 
	\begin{equation}
	\|(w_1, w_2)\|_{\mathcal{W}_h}\leq C \|(f_1,f_2,F_1,F_2)\|_{\mathcal{Z}_h}.
	\label{LocalBoundForTheMultilinearPart}
	\end{equation}
	Hence $\mathcal{L}_h^{-1}$ is a bijective, linear and bounded operator. For the nonlinearity we define \begin{equation*}
	\mathcal{Q}_h\colon\mathcal{W}_h\to\mathcal{D}_h
	\end{equation*}
	via
	\begin{align*}
	\begin{pmatrix} u_1 \\ u_2 \end{pmatrix} \mapsto &
	\begin{pmatrix}
	-\frac{1}{h^2} DG(\nabla_h u_0) \nabla_h u_1\\ -\frac{1}{h^2} DG(\nabla_h u_0) \nabla_h u_2 - \frac{1}{h^2} D^3\tilde{W}(\nabla_h u_0) [\nabla_h u_1, \nabla_h u_1] - \frac{\gamma_h(u_{0,h})}{h^3} D^2\tilde{W}(\nabla_h u_{0,h})[P]
	\end{pmatrix} \\
	&\qquad =: \begin{pmatrix} \mathcal{Q}_{1,h}(u_1, u_2)\\\mathcal{Q}_{2,h}(u_1, u_2) \end{pmatrix},
	\end{align*}
	where $u_0 := \mathcal{G}_{h,f - u_2}(u_{0,h})$ for some fixed $f\in H^{1,1}_{per}(\Omega)$ with $\|f\|_{H^{1,1}(\Omega)} \leq M h^2$ and $\int_\Omega f dx = 0$ and $G$ is defined as in \eqref{GlobalDefinitionOfNonlinearityG}.
	
	We deduce the contraction properties of $\mathcal{Q}_h$ similar as in the proof of Lemma \ref{LemmaExistenceOfU0H}. For this we assume that $\|(u_1, u_2)\|_{\mathcal{W}_h}$, $\|(u'_1, u'_2)\|_{\mathcal{W}_h}\leq CM_2 h^2$. Starting with $\mathcal{Q}_{1,h}$ we obtain
	\begin{align*}
	\|\mathcal{Q}_{1,h}&(u_1, u_2) - \mathcal{Q}_{1,h}(u'_1, u'_2)\|_{(L^2_h)'}\\
	& = \frac{1}{h^2} \bigg\|\int_0^1 D^3\tilde{W}(\tau\nabla_h u_0)[\nabla_h u_0, \nabla_h u_1] d\tau - \int_0^1 D^3\tilde{W}(\tau\nabla_h u'_0)[\nabla_h u'_0, \nabla_h u'_1]d\tau\bigg\|_{(L^2_h)'}\\
	& \leq \frac{C}{h}\|\nabla_h (u_0 - u'_0)\|_{H^2_h(\Omega)} \|\nabla_h u_1\|_{L^2_h(\Omega)} + \frac{C}{h} \|\nabla_h u'_0\|_{H^2_h(\Omega)} \|\nabla_h (u_1 - u'_1)\|_{L^2_h(\Omega)}\\
	&\quad + CM_2 \bigg\|\frac{1}{h}\e_h(u_0 - u'_0)\bigg\|_{H^1_h(\Omega)}\\
	& \leq CM_2 \|u_2 - u'_2\|_{H^{1,1}(\Omega)} + CM_2 \bigg\|\frac{1}{h}\e_h(u_1 - u'_1)\bigg\|_{L^2(\Omega)} \leq CM_2 \|(u_1 - u'_1, u_2 - u'_2)\|_{\mathcal{W}_h},
	\end{align*}
	where we used \eqref{LocalInequalityBoundednessOfU1byU2}. Similarly one deduces that
	\begin{align*}
	\|\partial_{x_j} (\mathcal{Q}_{1,h}(u_1, u_2) - \mathcal{Q}_{1,h}(u'_1, u'_2))\|_{L^2(\Omega)} \leq CM_2 \|(u_1 - u'_1, u_2 - u'_2)\|_{\mathcal{W}_h}\\
	\|\partial_{x_k} \partial_{x_j} (\mathcal{Q}_{1,h}(u_1, u_2) - \mathcal{Q}_{1,h}(u'_1, u'_2))\|_{L^2(\Omega)} \leq CM_2 \|(u_1 - u'_1, u_2 - u'_2)\|_{\mathcal{W}_h}
	\end{align*}
	for $j,k=1,2,3$. Analogously we deduce for $\mathcal{Q}_{2,h}$ 
	\begin{align*}
	\|\mathcal{Q}_{2,h}&(u_1, u_2) - \mathcal{Q}_{2,h}(u'_1, u'_2)\|_{(L^2_h)'}\\
	& \leq \frac{1}{h^2}\bigg\|\int_0^1 D^3\tilde{W}(\tau\nabla_h u_0)[\nabla_h u_0, \nabla_h u_2] - D^3\tilde{W}(\tau\nabla_h u'_0)[\nabla_h u'_0, \nabla_h u'_2]d\tau\bigg\|_{(L^2_h)'}\\
	&\quad + \frac{|\gamma_h(u_{0,h})|}{h^3}\bigg\|\int_0^1 D^3\tilde{W}(\tau\nabla_h u_0)[\nabla_h u_0, P] - D^3\tilde{W}(\tau\nabla_h u'_0)[\nabla_h u'_0, P]d\tau\bigg\|_{(L^2_h)'}\\
	& \quad + \frac{|\gamma_h(u_{0,h}) - \gamma_h (u_{0,h}')|}{h^3} \bigg\|\int_0^1 D^3\tilde{W}(\tau\nabla_h u'_0)[\nabla_h u'_0, P]d\tau\bigg\|_{(L^2_h)'}\\
	&\quad + \frac{1}{h^2} \bigg\| D^3\tilde{W}(\nabla_h u_0)[\nabla_h u_1, \nabla_h u_1] - D^3\tilde{W}(\nabla_h u'_0)[\nabla_h u'_1, \nabla_h u'_1]\bigg\|_{(L^2_h)'}\\
	& \leq  \frac{C}{h}\|\nabla_h (u_0 - u'_0)\|_{H^2_h(\Omega)} \|\nabla_h u_2\|_{L^2_h(\Omega)} + \frac{C}{h} \|\nabla_h u'_0\|_{H^2_h(\Omega)} \|\nabla_h (u_2 - u'_2)\|_{L^2_h(\Omega)}\\
	& \quad +  \frac{C}{h}\|\nabla_h (u_1 - u'_1)\|_{H^2_h(\Omega)} \|\nabla_h u_1\|_{L^2_h(\Omega)} + \frac{C}{h} \|\nabla_h u'_1\|_{H^2_h(\Omega)} \|\nabla_h (u_1 - u'_1)\|_{L^2_h(\Omega)}\\
	& \quad + \frac{C}{h^2} \|\nabla_h(u_0 - u'_0)\|_{H^2_h(\Omega)} \|\nabla_h u_1\|_{H^1_h(\Omega)} \|\nabla_h u'_1 \|_{H^1_h(\Omega)}\\
	& \quad + \frac{C}{h^2} \|\nabla_h(u_0 - u'_0)\|_{H^2_h(\Omega)} \|\nabla_h u'_0\|_{H^2_h(\Omega)} \|\nabla_h u_2 \|_{L^2_h(\Omega)}\\
	& \quad + \frac{|\gamma_h(u_{0,h})|}{h^2} \|\nabla_h(u_0 - u'_0)\|_{H^2_h(\Omega)} + |\gamma_h(u_{0,h}) - \gamma_h(u_{0,h}')|\\
	& \leq CM_2 \|(u_1 - u'_1, u_2 - u'_2)\|_{\mathcal{W}_h}, 
	\end{align*}
	where we used again Corollary \ref{Corollary_L1BoundsD3TildeW}, $|P|_h = |P|$, $|\gamma_h(u_{0,h})| \leq Ch^2$ and
	\begin{align*}
	|\gamma_h(u_{0,h}) - \gamma_h(u_{0,h}')| &\leq \frac{1}{h^3} \bigg\|\int_0^1 (1-\tau) \Big(D^3\tilde{W}(\nabla_h u_{0,h})[\nabla_h u_{0,h}, \nabla_h u_{0,h}]\\
	&\hspace{4cm} - D^3\tilde{W}(\nabla_h u'_{0,h})[\nabla_h u'_{0,h}, \nabla_h u'_{0,h}]\Big)\bigg\|_{(L^2_h)'}\\
	& \leq CM_2 \|\nabla_h(u_0 - u'_0)\|_{H^2_h(\Omega)} \leq CM_2 \|(u_1 - u'_1, u_2 - u'_2)\|_{\mathcal{W}_h}.
	\end{align*}
	Finally from 
	\begin{align*}
	\partial_{x_j} \mathcal{Q}_{2,h}(u_1, u_2) &= \frac{1}{h^2} \int_0^1 D^3\tilde{W}(\tau\nabla_h u_0)[\nabla_h u_0, \nabla_h\partial_{x_j} u_2] d\tau + \frac{1}{h^2} D^3\tilde{W}(\nabla_h u_0) [\nabla_h \partial_{x_j} u_0, \partial_h u_2]\\
	&\quad + \frac{2}{h^2} D^3\tilde{W}(\nabla_h u_0) [\nabla_h \partial_{x_j} u_1, \nabla_h u_1] + \frac{1}{h^2} D^4\tilde{W}(\nabla_h u_0) [\nabla_h \partial_{x_j} u_0, \nabla_h u_1, \nabla_h u_1]\\
	&\quad - \frac{\gamma_h}{h^3} D^3\tilde{W}(\nabla_h u_{0,h})[\nabla_h\partial_{x_j} u_0, P]
	\end{align*}
	it follows
	\begin{equation*}
	\|\partial_{x_j} (\mathcal{Q}_{2,h}(u_1, u_2) - \mathcal{Q}_{2,h}(u'_1, u'_2))\|_{(L^2_h)'} \leq CM_2 \|(u_1 - u'_1, u_2 - u'_2)\|_{\mathcal{W}_h}.
	\end{equation*}
	Choosing now $M_2 \in (0,1]$ small enough we obtain that
	\begin{equation*}
	\mathcal{F}_{h, f_0,f_1,f_2}\colon\overline{B_{CM_2 h^2}(0)} \subset\mathcal{X}_h\times \mathcal{W}_h \to \mathcal{X}_h\times \mathcal{W}_h
	\end{equation*}
	defined by
	\begin{equation*}
	\begin{pmatrix} u_0 \\ u_1 \\ u_2 \end{pmatrix} \mapsto 
	\begin{pmatrix}
	\mathcal{G}_{h, f_0 - u_{2}} (u_0)\\
	\mathcal{L}_h^{-1}\left(\begin{pmatrix}	f_1 \\ f_2 \end{pmatrix}, \mathcal{Q}_{h} (u_1, u_2)\right)
	\end{pmatrix}
	\end{equation*}
	is a $\frac{1}{2}$-contraction, where $f_0 := h^2 f^h|_{t=0}$,  $f_1 := h^2\partial_t f^h|_{t=0} - u_{3,h}$ and $f_2 := h^2\partial^2_t f^h|_{t=0} - u_{4,h}$. We can use an analogous argument as in Lemma \ref{LemmaExistenceOfU0H}. First it holds, due to \eqref{lokalAssumptionOfSmallnessForInitialData} and \eqref{lokalAssumptionOfSmallnessForRightHandSideG}, for $M>0$ sufficiently small
	\begin{equation*}
	\|\mathcal{F}_{h, f_0,f_1,f_2}(0)\|_{\mathcal{X}_h\times\mathcal{W}_h} \leq \tilde{C} M h^2 \leq \frac{C M_2 h^2}{2}
	\end{equation*}
	and with the $\frac{1}{2}$-contraction property we obtain the self mapping of $\mathcal{F}_{h, f_0,f_1,f_2}$. Moreover due to the norm on $\mathcal{X}_h$ and $\mathcal{W}_h$ we obtain \eqref{LocalInequalityForU0H} and \eqref{LocalInequalityForU1HandU2H}, respectively.
	
	Finally, the construction of $\tilde{u}_h$ implies that $\tilde{u}_{j,h}$ satisfies
	\begin{align*}
	\frac{1}{h^2}\Big(D^2\tilde{W}(0) \nabla_h \tilde{u}_{j,h}, \nabla_h\varphi\Big)_{L^2(\Omega)} & = \Big(h^2 \partial_t^j f^h|_{t=0} - \tilde{u}_{2+j,h}, \varphi\Big)_{L^2(\Omega)} + (\partial_t^j r_h, \varphi)_{L^2(\Omega)}\\
	& \qquad - \frac{1}{h^2} \int_0^L \Big(\tr{\partial S}(\partial_t^j r_{N,h}(x_1, \cdot)), \tr{\partial S}(\varphi(x_1, \cdot))\Big)_{L^2(\partial S)} dx_1
	\end{align*}
	for $j=0,1,2$ and all $\varphi\in\mathcal{B}$. This implies with \eqref{GlobalDefiningEquationForU0H}--\eqref{GlobalDefiningEquationForU3H}
	\begin{align*}
	\frac{1}{h^2} \big(\e_h(u_{1,h} &- \tilde{u}_{1,h}), \e_h(\varphi)\big)_{L^2(\Omega)} = -\frac{1}{h^2}\Big((D^2\tilde{W}(\nabla_h u_{0,h})- D^2\tilde{W}(0)) \nabla_h u_{1,h}, \nabla_h \varphi\Big)_{L^2(\Omega)}\\
	& + (r_{1,h}, \varphi)_{L^2(\Omega)} - \frac{1}{h^2} \int_0^L \Big(\tr{\partial S}(\partial_t r_{N,h}(x_1, \cdot)), \tr{\partial S}(\varphi(x_1, \cdot))\Big)_{L^2(\partial S)} dx_1\\
	\frac{1}{h^2}\big(\e_h(u_{2,h} &- \tilde{u}_{2,h}), \e_h(\varphi)\big)_{L^2(\Omega)} = -\frac{1}{h^2}\Big((D^2\tilde{W}(\nabla_h u_{0,h})- D^2\tilde{W}(0)) \nabla_h u_{2,h}, \nabla_h \varphi\Big)_{L^2(\Omega)}\\
	+ & (r_{2,h}, \varphi)_{L^2(\Omega)} - \frac{1}{h^2} \int_0^L \Big(\tr{\partial S}(\partial_t^2 r_{N,h}(x_1, \cdot)), \tr{\partial S}(\varphi(x_1, \cdot))\Big)_{L^2(\partial S)} dx_1\\
	- & \frac{1}{h^2} \Big(D^3\tilde{W}(\nabla_h u_{0,h}) [\nabla_h u_{1,h}, \nabla_h u_{1,h}], \nabla_h \varphi\Big)_{L^2(\Omega)} - \frac{\gamma_h}{h^3} \Big(D^2\tilde{W}(\nabla_h u_{0,h}) P,\nabla_h \varphi\Big)_{L^2(\Omega)}
	\end{align*}
	for all $\varphi\in\mathcal{B}$, where we defined 
	\[r_{j,h} := u_{2+j,h} - \tilde{u}_{2+j,h} - \partial_t^j r_h.\]
	With this it follows $\max_{j=1,2} \|r_{j,h}\|_{C^0(0,T;L^2(\Omega))}\leq Ch^3$ because of the definition of $u_{2+j,h}$ and the bound on $\partial_t r_h$. Additionally we have due to Lemma \ref{Lemma_DecompD3TildeW} and Corollary \ref{Corollary_L1BoundsD3TildeW}, the bounds on $(u_{0,h}, u_{1,h}, u_{2,h})$ and $\varphi\in\mathcal{B}$ 
	\begin{align*}
	\bigg| \frac{1}{h^2}\Big(&(D^2\tilde{W}(\nabla_h u_{0,h})- D^2\tilde{W}(0)) \nabla_h u_{j,h}, \nabla_h \varphi\Big)_{L^2(\Omega)} \bigg|\\
	&= \bigg|\frac{1}{h^2} \int_0^1 \Big(D^3\tilde{W}(\tau\nabla_h u_{0,h}) [\nabla_h u_{0,h}, \nabla_h u_{j,h}], \nabla_h \varphi\Big)_{L^2(\Omega)} d\tau\bigg| \leq Ch^3 \bigg\|\frac{1}{h}\e_h(\varphi)\bigg\|_{L^2(\Omega)}
	\end{align*}
	as well as 
	\begin{equation*}
	\bigg|\frac{1}{h^2} \Big(D^3\tilde{W}(\nabla_h u_{0,h}) [\nabla_h u_{1,h}, \nabla_h u_{1,h}], \nabla_h \varphi\Big)_{L^2(\Omega)}\bigg| \leq Ch^3 \bigg\|\frac{1}{h}\e_h(\varphi)\bigg\|_{L^2(\Omega)}
	\end{equation*}
	and
	\begin{equation*}
	\bigg|\frac{\gamma_h}{h^3} \Big(D^2\tilde{W}(\nabla_h u_{0,h}) P,\nabla_h \varphi\Big)_{L^2(\Omega)}\bigg| \leq Ch^2 \bigg\|\frac{1}{h}\e_h(\varphi)\bigg\|_{L^2(\Omega)}.
	\end{equation*}
	Regarding the boundary terms we use that $\tr{\partial S}\colon H^1(S)\to H^{\frac{1}{2}}(\partial S)$ is linear and bounded. Hence for $j=0,1,2$
	\begin{align}
	\bigg|\frac{1}{h^2} \int_0^L \Big(& \tr{\partial S}(\partial_t^j r_{N,h}(x_1, \cdot)), \tr{\partial S}(\varphi(x_1, \cdot))\Big)_{L^2(\partial S)} dx_1\bigg|\notag \\
	& \leq \frac{1}{h^2} \|\partial_t^j r_{N,h}\|_{L^2(0,L;H^1(S))} \|\varphi\|_{L^2(0,L;H^1(S))} \leq Ch^3 \bigg\|\frac{1}{h}\e_h(\varphi)\bigg\|_{L^2(\Omega)},
	\label{LocalInequalityBoundForBoundaryData}
	\end{align}
	where we used that $\|r_{N,h}\|_{C^2([0,T]; H^1(\Omega))} \leq Ch^5$ and the Poincaré and Korn inequality for $\varphi$. Choosing $\varphi = u_{j,h} - \tilde{u}_{j,h}$ it follows with an absorption argument
	\begin{equation*}
	\max_{j=1,2} \bigg\|\frac{1}{h}\e_h(u_{j,h}) -\frac{1}{h}\e_h(\tilde{u}_{j,h})\bigg\|_{L^2(\Omega)} \leq \begin{cases}
	Ch^3, \quad &\text{ if } j=1,\\
	Ch^2, \quad &\text{ if } j=2.
	\end{cases}
	\end{equation*}
	Now, for $u_{0,h}-\tilde{u}_{0,h}$ it holds
	\begin{align*}
	\frac{1}{h^2} (\e_h(u_{0,h}-\tilde{u}_{0,h}),& \e_h(\varphi))_{L^2(\Omega)} = -\frac{1}{h^2} (G(\nabla_h u_{0,h}), \nabla_h \varphi)_{L^2(\Omega)}\\
	& + (r_{0,h}, \varphi)_{L^2(\Omega)} - \frac{1}{h^2} \int_0^L \Big(\tr{\partial S}(r_{N,h}(x_1, \cdot)), \tr{\partial S}(\varphi(x_1, \cdot))\Big)_{L^2(\partial S)} dx_1.
	\end{align*}
	The definition of $G$ implies now 
	\begin{align*}
	\bigg|\frac{1}{h^2} (G(\nabla_h u_{0,h}), \nabla_h \varphi)_{L^2(\Omega)}\bigg| &= \bigg|\frac{1}{h^2} \int_0^1 (1-\tau) \Big(D^3\tilde{W}(\tau\nabla_h u_{0,h})[\nabla_h u_{0,h}, \nabla_h u_{0,h}], \nabla_h\varphi\Big)_{L^2(\Omega)} d\tau\bigg|\\
	& \leq Ch^3 \bigg\|\frac{1}{h}\e_h(\varphi)\bigg\|_{L^2(\Omega)}
	\end{align*}
	because of the bounds for $u_{0,h}$ and Corollary \ref{Corollary_L1BoundsD3TildeW}. Using \eqref{LocalInequalityBoundForBoundaryData} it follows 
	\begin{equation*}
	\bigg\|\frac{1}{h}\e_h(u_{0,h}) -\frac{1}{h}\e_h(\tilde{u}_{0,h})\bigg\|_{L^2(\Omega)} \leq Ch^3\qedhere
	\end{equation*}
\end{proof}

\subsection{Main Result}
\label{subsection::MainResult}
\begin{theorem}\label{thm:Main}
	Let $f_h$, $\tilde{v}_0$, $\tilde{v}_1$, $\tilde{u}_{j,h}$, $j=0,1,2$ and $\tilde{u}_h$ be given as above. Then there exists some $h_0\in (0,1]$ such that for $h\in (0,h_0]$ there are initial values $(u_{0,h}, u_{1,h})$ satisfying \eqref{AssumptionOnInitialDataU1}--\eqref{AssumptionOnInitialDataU3} and such that
	\begin{equation*}
	\max_{j=0,1} \bigg\|\frac{1}{h} \e_h(u_{j,h}) - \frac{1}{h} \e_h(\tilde{u}_{j,h})\bigg\|_{L^2(\Omega)} \leq Ch^3.
	\end{equation*}
	Moreover, if $u_h$ solves \eqref{NLS1}--\eqref{NLS4}, then
	\begin{equation*}
	\bigg\|\bigg((u_h - \tilde{u}_h), \frac{1}{h} \int_0^t \e_h\big(u_h(\tau) - \tilde{u}_h(\tau)\big) d\tau\bigg)\bigg\|_{L^\infty(0,L; L^2(\Omega))} \leq Ch^3\quad \text{for all }0<h\leq h_0.
	\end{equation*}
\end{theorem}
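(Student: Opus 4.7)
The plan is to combine the initial-data construction of Lemma \ref{LemmaExistenceOfU1U2andInequality} with a large-time existence result for the nonlinear system and a Gronwall-type energy argument applied to the error $w_h := u_h - \tilde{u}_h$.

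First, I would apply Lemma \ref{LemmaExistenceOfU1U2andInequality} to obtain the triple $(u_{0,h}, u_{1,h}, u_{2,h}) \in \mathcal{B}$ satisfying \eqref{GlobalDefiningEquationForU0H}--\eqref{GlobalDefiningEquationForU3H} together with the bounds \eqref{LocalInferalityForU0H}--\eqref{LocalInequalityForU1HandU2H} and the crucial approximation estimate \eqref{GlobalInequalityBoundForDifferenzeOfInitialValues}. The first half of the theorem (the $h^3$ bound on $\tfrac{1}{h}\e_h(u_{j,h}-\tilde{u}_{j,h})$ for $j=0,1$) is then immediate. The compatibility relations \eqref{GlobalDefiningEquationForU0H}--\eqref{GlobalDefiningEquationForU3H} are precisely those demanded by the large-time existence theorem of \cite{AbelsAmeismeier1}, so for $h\le h_0$ sufficiently small this yields a solution $u_h$ of \eqref{NLS1}--\eqref{NLS4} on $[0,T]$ together with the usual a priori scaled regularity, in particular ensuring $\|\nabla_h u_h\|_{L^\infty}$ stays inside the ball on which the Taylor expansion of $D\tilde{W}$ is valid.

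Next, using \eqref{GlobalDefinitionOfNonlinearityG} to write $D\tilde{W}(\nabla_h u_h) = D^2\tilde{W}(0)\nabla_h u_h + G(\nabla_h u_h)$ and subtracting the linearised equation satisfied by $\tilde{u}_h$, the error $w_h$ solves, in weak form,
\begin{equation*}
\partial_t^2 w_h - \frac{1}{h^2}\operatorname{div}_h\!\bigl(D^2\tilde{W}(0)\nabla_h w_h\bigr) = r_h - \frac{1}{h^2}\operatorname{div}_h G(\nabla_h u_h),
\end{equation*}
with boundary data governed by $r_{N,h}$ and initial data controlled by \eqref{GlobalInequalityBoundForDifferenzeOfInitialValues}. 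Testing against $\partial_t w_h$ and using $D^2\tilde{W}(0)F{:}F = |\operatorname{sym} F|^2$ together with the sharp Korn inequality of Lemma \ref{KornIneqS}, I would derive the energy identity
\begin{equation*}
\tfrac12\|\partial_t w_h(t)\|_{L^2}^2 + \tfrac{1}{2h^2}\|\e_h(w_h(t))\|_{L^2}^2 \le \text{(initial energy)} + \int_0^t \mathcal{R}_h(\tau)\,d\tau.
\end{equation*}
The initial energy is $O(h^6)$ by Lemma \ref{LemmaExistenceOfU1U2andInequality}; the inhomogeneities $r_h, r_{N,h}$ contribute $O(h^3)$ and $O(h^5)$; and by Corollary \ref{Corollary_L1BoundsD3TildeW} applied with $Z = \nabla_h u_h$ (itself $O(h^2)$ in $L^\infty$) the cubic remainder $G(\nabla_h u_h)$ yields a contribution absorbable via Gronwall. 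Integrating the resulting bound once in time produces the left-hand-side quantity $\tfrac1h\int_0^t \e_h(u_h-\tilde{u}_h)\,d\tau$ that appears in the statement.

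The main obstacle, as flagged already in the introduction, is the infinitesimal rotation mode $x\mapsto x^\perp$: it lies in $\ker\e_h$ and is removed by the definition of $\mathcal{B}$, but $u_h$ itself need not belong to $\mathcal{B}$ for $t>0$, whereas $\tilde{u}_h$ does. I would therefore split
\begin{equation*}
w_h(\cdot,t) = \bar{w}_h(\cdot,t) + \alpha_h(t)\,x^\perp, \qquad \bar{w}_h(\cdot,t)\in\mathcal{B},
\end{equation*}
apply the energy estimate above to $\bar{w}_h$ (the projection onto $\mathcal{B}$), and exploit the frame invariance $W(RF)=W(F)$ and the identity \eqref{Global_ThirdOrderD3WEquality} for $D^3\tilde{W}(0)$ on skew arguments to show that the rotational term produces only contributions of higher order in $h$. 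The scalar coefficient $\alpha_h(t)$ is then controlled through an ODE obtained by testing the equation for $w_h$ against $x^\perp$, using $\alpha_h(0), \alpha_h'(0) = O(h^3)$ from \eqref{GlobalInequalityBoundForDifferenzeOfInitialValues}. Combining the two components and taking the supremum over $t\in[0,T]$ gives the claimed $O(h^3)$ estimate.
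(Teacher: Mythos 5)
Your proposal contains a genuine gap at the step where you invoke the large-time existence theorem, and you should also be aware that your energy-estimate route differs from the paper's.

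The key difficulty you skip over is the claim that the relations \eqref{GlobalDefiningEquationForU0H}--\eqref{GlobalDefiningEquationForU3H} from Lemma~\ref{LemmaExistenceOfU1U2andInequality} ``are precisely those demanded by the large-time existence theorem.'' They are not: the lemma only yields the variational equations tested against $\varphi\in\mathcal{B}$, whereas Theorem~\ref{MainTheorem} requires the \emph{strong} compatibility conditions, i.e.\ that $u_{2,h}= h^{2}f_h|_{t=0}+\frac{1}{h^2}\divh(D\tilde{W}(\nabla_h u_{0,h}))$ and the analogous identities for $u_{3,h},u_{4,h}$ together with the zero Neumann trace. Since $\mathcal{B}$ excludes the rotational direction $x\mapsto x^\perp$, the weak form in $\mathcal{B}$ only gives these identities up to a Lagrange multiplier proportional to $x^\perp$, and that multiplier is generically non-zero because $\int_\Omega \divh(D\tilde{W}(\nabla_h u_{0,h}))\cdot x^\perp\,dx = -\frac1h\int_\Omega D\tilde{W}(\nabla_h u_{0,h}):P\,dx$ does not vanish. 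The paper fixes this by redefining $u_{2,h}=\bar u_{2,h}+\gamma_2^h x^\perp$ and $\bar u_{2+j,h}=u_{2+j,h}+\gamma_{2+j}^h x^\perp$, and then must prove $|\gamma_j^h|\le Ch^2$ so that the hypotheses \eqref{AssumptionOnInitialDataU1}--\eqref{AssumptionOnInitialDataU3} still hold. The delicate case is $\gamma_4^h$: its bound relies on the explicit identity \eqref{Global_ThirdOrderD3WEquality}, the vanishing of $D^3\tilde{W}(0)[Q,P,P]$ for the particular diagonal structure of $Q$ produced by $\tilde u_{0,h}$, the $O(h^4)$ smallness of the remaining part $R$ of $\operatorname{sym}(\nabla_h\tilde u_{0,h})$, and the approximation bound \eqref{GlobalInequalityBoundForDifferenzeOfInitialValues}. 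None of this appears in your proposal, and without it you cannot legitimately apply Theorem~\ref{MainTheorem}.

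On the second half, you propose testing against $\partial_t w_h$ and then controlling the rotational mode by a separate ODE for $\alpha_h(t)$ obtained from testing against $x^\perp$. The paper instead uses the \emph{a priori} estimate \eqref{InequalitieLinearisedWeakForm}, obtained by testing against the backward time integral $\tilde u_{T'}(t)=-\int_t^{T'}w\,d\tau$, which directly yields the two quantities $\|w_h\|_{L^2}$ and $\|\frac1h\int_0^t \e_h(w_h)\,d\tau\|_{L^2}$ appearing in the statement; and it bounds the rotational contribution $\frac1h\int_0^t\int_\Omega u_h\cdot x^\perp\,dx\,d\tau$ not by an ODE for a modal coefficient but via the angular momentum balance obtained by pairing \eqref{NLS1} with $x^\perp$, combined with the estimate \eqref{GlobalMainTheoremInequality} on $u_h$ itself. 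Your $\partial_t w_h$-route would instead estimate $\|\partial_t w_h\|$ and $\|\frac1h\e_h(w_h)\|$ pointwise in time, from which the claimed quantities can be recovered by integration, but you would need to integrate by parts in time in the cubic remainder term $\frac{1}{h^2}(G(\nabla_h u_h),\nabla_h\partial_t w_h)$ to avoid the uncontrolled factor $\nabla_h\partial_t w_h$, and the coupling between $\ddot\alpha_h$ and $\partial_t\bar w_h$ introduces cross terms that your sketch does not check. These are plausible but unverified detours from the paper's argument; the unaddressed compatibility-condition issue, however, is a definite gap.
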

\begin{proof}
	Given $(u_{3,h}, u_{4,h})$ we construct $(u_{0,h}, u_{1,h}, u_{2,h})$ such that \eqref{AssumptionOnInitialDataU1}--\eqref{AssumptionOnInitialDataU3} holds. First we note that $\|u_{4,h}\|_{L^2(\Omega)}$ is of order $h^2$ as $\partial_{x_1}^l v^{4}$ is bounded in $L^2(0,L)$ for $l=0,1$. Moreover we have
	\begin{equation*}
	\int_\Omega u_{2+j, h} dx = 0
	\end{equation*}
	for $j=1,2$ and 
	\begin{equation*}
	\int_\Omega u_{3,h}\cdot x^\perp dx = 0.
	\end{equation*}
	Using the structure of $u_{3,h}$ we obtain
	\begin{equation*}
	\frac{1}{h}\e_h(u_{3,h}) = h^2
	\begin{pmatrix}
	-x_2 \partial_{x_1}^2 v_{2}^3 - x_3 \partial_{x_1}^2 v_{3}^3 & 0  & 0\\
	0 & 0 & 0 \\
	0 & 0 & 0
	\end{pmatrix}.
	\end{equation*}
	Altogether we obtain that $u_{3,h}$ and $u_{4,h}$ satisfy \eqref{AssumptionOnInitialDataU1}--\eqref{AssumptionOnInitialDataU3}, the necessary conditions for the large times existence result in the appendix. The assumptions on $g$ and the structure of $f_h$ imply that \eqref{AssumptionsOnNonlinearf} and \eqref{AssumptionsOnRotationOfNonlinearf} are fulfilled. Applying Lemma \ref{LemmaExistenceOfU0H} and \ref{LemmaExistenceOfU1U2andInequality} we obtain for $h_0$ sufficiently small, the existence of $(u_{0,h}, u_{1,h}, \bar{u}_{2,h})$ such that
	\begin{align*}
	\frac{1}{h^2} \Big(D\tilde{W}(\nabla_h u_{0,h}), \nabla_h \varphi\Big)_{L^2(\Omega)} &= (h^2 f_h|_{t=0}, \varphi)_{L^2(\Omega)} - (\bar{u}_{2,h}, \varphi)_{L^2(\Omega)}\\
	\frac{1}{h^2}\Big(D^2\tilde{W}(\nabla_h u_{0,h})\nabla_h u_{1,h}, \nabla_h \varphi\Big)_{L^2(\Omega)} &= (h^2 \partial_t f_h|_{t=0}, \varphi)_{L^2(\Omega)} - (u_{3,h}, \varphi)_{L^2(\Omega)}
	\end{align*}
	and 
	\begin{align*}
	\frac{1}{h^2}\Big(D^2\tilde{W}(&\nabla_h u_{0,h})\nabla_h \bar{u}_{2,h}, \nabla_h \varphi\Big)_{L^2(\Omega)} = (h^2 \partial_t^2 f_h|_{t=0} - u_{4,h}, \varphi)_{L^2(\Omega)}\\
	-\frac{1}{h^2} &\Big(D^3\tilde{W}(\nabla_h u_{0,h}) [\nabla_h u_{1,h}, \nabla_h u_{1,h}], \nabla_h \varphi\Big)_{L^2(\Omega)} - \frac{\gamma_h}{h^3} \Big(D^2\tilde{W}(\nabla_h u_{0,h}) P,\nabla_h \varphi\Big)_{L^2(\Omega)}
	\end{align*}
	for all $\varphi\in\mathcal{B}$. We use the ansatz $u_{2,h} = \bar{u}_{2,h} + \gamma_2^h x^\perp$ and $\bar{u}_{2+j,h} = u_{2+j,h} + \gamma_{2+j}^h x^\perp$ for $j=1,2$. Choosing
	\begin{equation*}
	\gamma_2^h := -\frac{1}{\mu(S) h^2} \Big(D\tilde{W}(\nabla_h u_{0,h}), \nabla_h x^\perp\Big)_{L^2}
	\end{equation*}
	it follows
	\begin{equation}
	\frac{1}{h^2} \Big(D\tilde{W}(\nabla_h u_{0,h}), \nabla_h \varphi\Big)_{L^2(\Omega)} = (h^2 f_h|_{t=0}, \varphi)_{L^2(\Omega)} - (u_{2,h}, \varphi)_{L^2(\Omega)}
	\end{equation}
	for all $\varphi\in H^1_{per}(\Omega;\R^3)$. Moreover, for 
	\begin{equation*}
	\gamma_3^h := \frac{1}{\mu(S)h^2} \Big(D^2\tilde{W}(\nabla_h u_{0,h})\nabla_h u_{1,h}, \nabla_h x^\perp\Big)_{L^2}
	\end{equation*}
	we deduce
	\begin{equation}
	\frac{1}{h^2}\Big(D^2\tilde{W}(\nabla_h u_{0,h})\nabla_h u_{1,h}, \nabla_h \varphi\Big)_{L^2(\Omega)} = (h^2 \partial_t f_h|_{t=0}, \varphi)_{L^2(\Omega)} - (\bar{u}_{3,h}, \varphi)_{L^2(\Omega)}
	\end{equation}
	for all $\varphi\in H^1_{per}(\Omega;\R^3)$. Then it holds $|\gamma_2^h|\leq C h^2$ as
	\begin{equation*}
	D\tilde{W}(\nabla_h u_{0,h}) = D^2\tilde{W}(0)[\nabla_h u_{0,h}] + \int_0^1 (1-\tau) D^3\tilde{W}(\tau \nabla_h u_{0,h})[\nabla_h u_{0,h}, \nabla_h u_{0,h}] d\tau
	\end{equation*}
	and $|\gamma_3^h| \leq C h^2$ with a similar calculation. Lastly, we need to find $\gamma_4^h$ such that 
	\begin{align}
	\frac{1}{h^2}\Big(D^2\tilde{W}(\nabla_h u_{0,h})\nabla_h u_{2,h}, \nabla_h \varphi\Big)_{L^2(\Omega)} = (h^2 \partial_t^2 f_h|_{t=0} - \bar{u}_{4,h}, \varphi)_{L^2(\Omega)}\\
	-\frac{1}{h^2} \Big(D^3\tilde{W}(\nabla_h u_{0,h}) [\nabla_h u_{1,h}, \nabla_h u_{1,h}], \nabla_h \varphi\Big)_{L^2(\Omega)}\notag
	\end{align}
	for all $\varphi\in H^1_{per}(\Omega;\R^3)$. Therefore we choose
	\begin{align*}
	\gamma_4^h &:= -\frac{1}{h^2}\Big(D^2\tilde{W}(\nabla_h u_{0,h})\nabla_h u_{2,h}, \nabla_h x^\perp\Big)_{L^2} -\frac{\gamma_2^h}{h^2}\Big(D^2\tilde{W}(\nabla_h u_{0,h})\nabla_h x^\perp, \nabla_h x^\perp\Big)_{L^2}\\
	& \qquad + \frac{1}{h^2} \Big(D^3\tilde{W}(\nabla_h u_{0,h})[\nabla_h u_{1,h}, \nabla_h u_{1,h}], \nabla_h x^\perp\Big)_{L^2}.
	\end{align*}		
	The first and last term can be bounded easily, using Corollary \ref{Corollary_L1BoundsD3TildeW}
	\begin{align*}
	\bigg|\frac{1}{h^2}\Big(D^2\tilde{W}(\nabla_h u_{0,h}) \nabla_h \bar{u}_{0,h}, \nabla_h x^\perp\Big)_{L^2} \bigg| &= \bigg|\frac{1}{h^3} \int_0^1 \Big(D^3\tilde{W}(\nabla_h u_{0,h})[\nabla_h u_{0,h}, \nabla_h\bar{u}_{2,h}], P\Big)_{L^2}\bigg| \\
	&\leq \frac{C}{h^2} \|\nabla_h u_{0,h}\|_{H^1_h} \|\nabla_h \bar{u}_{2,h}\|_{H^1_h} \leq Ch^2
	\end{align*}
	and 
	\begin{equation*}
	\bigg|\frac{1}{h^2} \Big(D^3\tilde{W}(\nabla_h u_{2,h})[\nabla_h u_{1,h}, \nabla_h u_{1,h}], \nabla_h x^\perp\Big)_{L^2} \bigg| \leq Ch^2.
	\end{equation*}
	For the second part of $\gamma_4^h$ we use the following equality 
	\begin{align*}
	\Big(D^2\tilde{W}(\nabla_h u_{0,h}) P, P\Big)_{L^2(\Omega)} &= \Big(D^3\tilde{W}(0) [\nabla_h u_{0,h}, P], P\Big)_{L^2(\Omega)} \\
	& \qquad + \int_0^1 (1-\tau) \Big(D^4\tilde{W}(\tau \nabla_h u_{0,h})[\nabla_h u_{0,h}, \nabla_h u_{0,h}, P], P\Big)_{L^2(\Omega)} d\tau
	\end{align*}
	where 
	\begin{equation}
	\bigg|\Big(D^4\tilde{W}(\tau \nabla_h u_{0,h})[\nabla_h u_{0,h}, \nabla_h u_{0,h}, P], P\Big)_{L^2(\Omega)}\bigg| \leq Ch^4 \quad\text{ for all } \tau\in[0,1]
	\end{equation}
	as $\|u_{0,h}\|_{H^1_h(\Omega)} \leq Ch^2$ and $|P|_h = |P|$, because $P\in\Rtimes_{skew}$. Furthermore, we obtain with \eqref{Global_ThirdOrderD3WEquality}
%	\begin{equation}
%		D^3\tilde{W}(0)[A,B,P] = \Big((A^T- A)^T \operatorname{sym}(B) + (B^T - B)^T \operatorname{sym}(A)\Big) : P.
%		\label{globalThirdOrderDerivativeOfWDist2}
%	\end{equation}
%	for all $A$, $B\in\Rtimes$ and $P\in\Rtimes_{skew}$. Then
	\begin{align*}
		\Big(D^3\tilde{W}(0) [\nabla_h u_{0,h}, P], P\Big)_{L^2(\Omega)} = h \bigg(D^3\tilde{W}(0) &\bigg[\frac{1}{h}\e_h(u_{0,h}) - \frac{1}{h}\e_h(\tilde{u}_{0,h}), P\bigg], P\bigg)_{L^2(\Omega)}\\
		&+ \Big(D^3\tilde{W}(0) [\nabla_h \tilde{u}_{0,h}, P], P\Big)_{L^2(\Omega)}.
	\end{align*}
%	holds.
	Utilizing the inequality for the initial values \eqref{GlobalInequalityBoundForDifferenzeOfInitialValues}, we deduce 
	\begin{equation*}
	\bigg| h \bigg(D^3\tilde{W}(0) \bigg[\frac{1}{h}\e_h(u_{0,h}) - \frac{1}{h}\e_h(\tilde{u}_{0,h}), P\bigg], P\bigg)_{L^2}\bigg| \leq C h^4.
	\end{equation*}
	Lastly due to the symmetry properties of $D^3\tilde{W}$, the structure of $\nabla_h \tilde{u}_{0,h}$ and \eqref{Global_ThirdOrderD3WEquality} it follows
	\begin{equation*}
	\bigg|\Big(D^3\tilde{W}(0) [\operatorname{sym}(\nabla_h \tilde{u}_{0,h}), P], P\Big)_{L^2}\bigg| = \bigg|\Big(D^3\tilde{W}(0) [h^3 Q, P], P\Big)_{L^2} + \Big(D^3\tilde{W}(0) [R, P], P\Big)_{L^2}\bigg|
	\end{equation*}
	where
	\begin{align*}
	Q &= \begin{pmatrix}
	-x_2 \partial_{x_1}^2 v_2 - x_3 \partial_{x_1}^2 v_3 & 0 & 0\\
	0 & 0 & 0 \\
	0 & 0 & 0
	\end{pmatrix}\\
	R & = h^4 \operatorname{sym}\begin{pmatrix}
	0 & \partial_{x_2} a_2 \partial_{x_1}^3 v_2 + \partial_{x_2} a_3 \partial_{x_1}^3 v_3 & \partial_{x_3} a_2 \partial_{x_1}^3 v_2 + \partial_{x_3} a_3\partial_{x_1}^3 v_3 \\
	0&0&0\\
	0&0&0\end{pmatrix} \\
	& \quad +h^5 \operatorname{sym}\begin{pmatrix}
	a_2 \partial_{x_1}^4 v_2 + a_3 \partial_{x_1}^4 v_3 & 0 &0\\
	0& \partial_{x_2} b_2 \partial_{x_1}^4 v_2 + \partial_{x_2} c_3 \partial_{x_1}^4 v_3  & \partial_{x_3} b_2 \partial_{x_1}^4 v_2 +\partial_{x_3} c_3 \partial_{x_1}^4 v_3\\
	0 & \partial_{x_2} b_3 \partial_{x_1}^4 v_3 +\partial_{x_2} c_2 \partial_{x_1}^4 v_2 & \partial_{x_3} b_3 \partial_{x_1}^4 v_3 + \partial_{x_3} c_2 \partial_{x_1}^4 v_2
	\end{pmatrix}\\
	& \quad + h^6 \operatorname{sym}\begin{pmatrix}
	0 & 0 & 0 \\
	b_2 \partial_{x_1}^5 v_2 + c_3 \partial_{x_1}^5 v_3 & 0 & 0 \\
	b_3 \partial_{x_1}^5 v_3 + c_2 \partial_{x_1}^5 v_2  & 0 & 0
	\end{pmatrix}.
	\end{align*}
	Due to the structure of $Q$ and $P = \nabla x^\perp$ it follows 
	\begin{equation*}
	D^3\tilde{W}(0)[Q,P,P] = \Big((Q^T- Q)^T \operatorname{sym}(P) + (P^T - P)^T \operatorname{sym}(Q)\Big) : P = 0.
	\end{equation*}
	Hence, with $R = O(h^4)$ we obtain
	\begin{equation*}
	\bigg|\Big(D^3\tilde{W}(0) [\operatorname{sym}(\nabla_h \tilde{u}_{0,h}), P], P\Big)_{L^2}\bigg| \leq Ch^4.
	\end{equation*}
	Thus, altogether, it follows with $|\gamma_2^h| \leq Ch^2$
	\begin{equation*}
	\bigg|\frac{\gamma_2^h}{h^2} \Big(D^2\tilde{W}(\nabla_h u_{0,h})\nabla_h x^\perp, \nabla_h x^\perp\Big)_{L^2}\bigg| \leq Ch^2.
	\end{equation*}
	We obtain for $h_0$ sufficiently small, the existence of $(u_{0,h}, u_{1,h}, u_{2,h})$ such that \eqref{AssumptionOnInitialDataU1}--\eqref{AssumptionOnInitialDataU3} are satisfied and 
	\begin{equation*}
	\max_{j=0,1} \bigg\|\bigg(\frac{1}{h}\e_h(u_{j,h}) - \frac{1}{h}\e_h(\tilde{u}_{j,h})\bigg)\bigg\|_{L^2(\Omega)} \leq Ch^3
	\end{equation*}
	holds.
	
	Due to Theorem \ref{MainTheorem} there exists a solution $u_h\in \bigcap_{k=0}^4 C^k([0,T]; H^{4-k}_{per}(\Omega;\R^3))$ of \eqref{NLS1}--\eqref{NLS4}. Thus $w_h := u_h - \tilde{u}_h$ solves the system
	\begin{align*}
	-\big(\partial_t w_h, \partial_t\varphi&\big)_{L^2(Q_T)} + \frac{1}{h^2} \big(D^2\tilde{W}(0) \nabla_h w_h, \nabla_h \varphi\big)_{L^2(Q_T)} -  (w_1, \varphi|_{t=0})_{L^2(\Omega)}\\
	&= \frac{1}{h^2}\int_0^1 \big((D^2\tilde{W}(\tau \nabla_h u_h) - D^2\tilde{W}(0))\nabla_h\tilde{u}_h, \nabla_h \varphi\big)_{L^2(Q_T)}d\tau  - \big(r_h, \varphi\big)_{L^2(Q_T)}\\
	&\qquad - \frac{1}{h^2} (\operatorname{tr}_{\partial\Omega}(r_{N,h}), \operatorname{tr}_{\partial\Omega}(\varphi))_{L^2(0,T;L^2(\partial\Omega))},\\
	&w_h \text{ is } L\text{-periodic in } x_1\text{-direction},\\
	&w_h|_{t=0} = w_{0,h}
	\end{align*}
	for all $\varphi\in C^1([0,T]; H^1_{per, (0)}(\Omega;\R^3))$ with $\varphi|_{t=T} = 0$ and with $w_{j,h} := u_{j, h} - \tilde{u}_{j,h}$, $j=0,1$. Hence with \eqref{InequalitieLinearisedWeakForm} we obtain an upper bound for $w$. For this we use that, due to the structure of $r_h$ and $r_{N,h}$, it follows
	\begin{gather*}
	\frac{1}{h^2} \|r_{N,h}\|_{L^1(0,T;H^1)} \leq Ch^3,\qquad 
	\|r_h\|_{L^1(0,T;L^2)} \leq Ch^3
	\end{gather*}	
	as $a$, $b$, $c$ and $v$ are sufficiently regular. Moreover, using \eqref{GlobalInequalityBoundForDifferenzeOfInitialValues}
	\begin{equation*}
	\|w_{k,h}\|_{L^2(\Omega)} \leq 	\max_{j=0,1} \bigg\|\bigg(\frac{1}{h}\e_h(u_{j,h}) - \frac{1}{h}\e_h(\tilde{u}_{j,h})\bigg)\bigg\|_{L^2(\Omega)} \leq Ch^3
	\end{equation*}
	for $k=0,1$, where we used Poincaré's and Korn's inequality, as well as the fact that $w_{k,h}\in \mathcal{B}$ holds for $k=0,1$.
	With the fundamental theorem of calculus and Corollary \ref{Corollary_L1BoundsD3TildeW} we deduce 
	\begin{align*}
	\sup_{\varphi\in X_h, \|\varphi\|_{X_h} = 1} &\bigg|\frac{1}{h^2}\int_0^1\Big((D^2\tilde{W}(\tau \nabla_h u_h) - D^2\tilde{W}(0)) \nabla_h \partial_t \tilde{u}_h, \nabla_h \varphi\Big)_{L^2(\Omega)}d\tau \bigg|\\
	& \leq \sup_{\varphi\in X_h, \|\varphi\|_{X_h} = 1} \bigg|\frac{1}{h^2}\int_0^1 \int_0^1 \Big(D^3\tilde{W}(s \tau\nabla_h u_h) [\nabla_h u_h, \nabla_h \partial_t \tilde{u}_h], \nabla_h \varphi\Big)_{L^2(\Omega)}ds d\tau\bigg|\\
	&\leq \sup_{\varphi\in X_h, \|\varphi\|_{X_h} = 1} \frac{C}{h}\|\nabla_h u_h\|_{H^2_h(\Omega)}\|\nabla_h \partial_t \tilde{u}_h \|_{L^2_h(\Omega)}\|\nabla_h \varphi\|_{L^2_h(\Omega)} \leq CRh^3.
	\end{align*}
	Lastly we have to deal with the rotational term. Using the momentum balance law, $u_{0,h}$, $u_{1,h}\in\mathcal{B}$ and the structure of $g$, we obtain with $q^h := h^2 f^h$
	\begin{align*}
	\int_0^t \int_\Omega u_h\cdot x^\perp dxd\tau =& t \int_\Omega u_{0,h} \cdot x^\perp dx + \frac{1}{2} t^2\int_\Omega u_{1,h}\cdot x^\perp dx + \int_0^t (t-s) \int_\Omega q^h \cdot x^\perp dxds \\
	& +\frac{1}{h} \int_0^t \int_0^\tau (\tau-s) \int_\Omega q^h \cdot u_h^\perp - u_h^\perp \cdot \partial_t^2 u_h dxdsd\tau\\
	= & \frac{1}{h} \int_0^t \int_0^\tau (\tau-s) \int_\Omega q^h \cdot u_h^\perp -  u_h^\perp \cdot \partial_t^2 u_h dxdsd\tau.
	\end{align*}
	Hence it follows
	\begin{align*}
	\bigg\|\int_0^t \frac{1}{h} \int_\Omega u_h\cdot x^\perp dxd\tau \bigg\|_{C^0([0,T])} &\leq C \bigg(\bigg\|\frac{1}{h^2}\int_\Omega q^h \cdot u_h^\perp dx\bigg\|_{C^0([0,T])} + \bigg\|\frac{1}{h^2}\int_\Omega \partial_t^2 u_h \cdot u_h^\perp dx\bigg\|_{C^0([0,T])} \bigg)\\
	& \leq Ch^{3}
	\end{align*}
	as due to \eqref{GlobalMainTheoremInequality}
	\begin{equation*}
	\bigg\|\frac{1}{h}\e(\partial_t^\delta u_h)\bigg\|_{L^\infty(0,T;L^2)} + \bigg\|\frac{1}{h}\int_\Omega \partial_t^\delta u_h\cdot x^\perp dx\bigg\|_{L^\infty(0,T)} \leq Ch^{2}
	\end{equation*}
	for $\delta = 0,2$. Thus with \eqref{InequalitieLinearisedWeakForm} it follows
	\begin{equation*}
	\bigg\|\bigg((u_h - \tilde{u}_h), \frac{1}{h} \int_0^t \e_h\big(u_h(\tau) - \tilde{u}_h(\tau)\big) d\tau\bigg)\bigg\|_{C^0([0,T];L^2)} \leq Ch^{3}.\qedhere
	\end{equation*}
\end{proof}
\appendix
\section{Large Times Existence for the Non-linear Problem}
\label{Appendix::ExistenceOfClassicalSolutionsForFixedH}
The existence of solutions follows from
\begin{theorem}
	Let $\theta \geq 1$, $0< T< \infty$, $f_h\in W^3_1(0,T; L^2(\Omega)) \cap W^1_1(0,T; H^2_{per}(\Omega))$, $h\in (0,1]$ and $u_{0,h}\in H^4_{per}(\Omega)$, $u_{1,h}\in H^3_{per}(\Omega)$ such that
	\begin{align*}
	D\tilde{W}(\nabla_h u_{0,h})\nu|_{(0,L)\times\partial S} = D^2\tilde{W}(\nabla_h u_{0,h})[\nabla_h u_{1,h}]\nu|_{(0,L)\times\partial S} = 0,\\
	(D^2\tilde{W}(\nabla_h u_{0,h})[\nabla_h u_{2,h}]+ D^3\tilde{W}(\nabla_h u_{0,h})[\nabla_h u_{1,h}, \nabla_h u_{1,h}])\nu|_{(0,L)\times\partial S} = 0,
	\end{align*}
	where 
	\begin{align*}
	u_{2,h} &= h^{1+\theta} f_h|_{t=0} + \frac{1}{h^2}\divh(D\tilde{W}(\nabla_h u_{0,h}))\\
	u_{3,h} & = h^{1+\theta} \partial_t f_h|_{t=0} + \frac{1}{h^2} \divh(D^2\tilde{W}(\nabla_h u_{0,h})\nabla_h u_{1,h})\\
	u_{4,h} & = h^{1+\theta} \partial_t^2 f_h|_{t=0} + \frac{1}{h^2} \divh(D^2\tilde{W}(\nabla_h u_{0,h})\nabla_h u_{2,h})\\
	& \qquad + \frac{1}{h^2} \divh(D^3\tilde{W}(\nabla_h u_{0,h})[\nabla_h u_{1,h}, \nabla_h u_{1,h}]).
	\end{align*}
	Moreover we assume for the initial data
	\begin{gather}
		\Big\|\frac{1}{h}\e_h(u_{0,h})\Big\|_{H^2} + \max_{k=0,1,2} \Big\|\Big(\frac{1}{h}\e_h(u_{1+k, h}), \partial_{x_1}\frac{1}{h}\e_h(u_{k,h}), u_{2+k,h}\Big)\Big\|_{H^{2-k}} \leq Mh^{1+\theta}, \label{AssumptionOnInitialDataU1}\\
		\Big\|\nabla_h^2 u_{0,h}\Big\|_{H^1} + \max_{k=0,1} \Big\|\Big(\nabla_h^2 u_{1+k, h}, \partial_{x_1}\nabla_h^2 u_{k,h}\Big)\Big\|_{H^{1-k}} \leq Mh^{1+\theta}, \label{AssumptionOnInitialDataU2}\\
		\max_{k=0,1,2,3} \bigg|\frac{1}{h} \int_\Omega u_{k,h}\cdot x^\perp dx\bigg| \leq Mh^{1+\theta}\label{AssumptionOnInitialDataU3}
	\end{gather}
	and for the right hand side
	\begin{gather}
		\max_{|\alpha|\leq 1} \bigg(\|\partial_{(t,x_1)}^\alpha f_h\|_{W^2_1(L^2)} + \|\partial_{(t,x_1)}^\alpha f_h\|_{W^1_\infty(L^2)\cap W^1_1(H^{0,1})} + \|\partial_{(t,x_1)}^\alpha f_h\|_{L^\infty(H^1)}\bigg) \leq M, \label{AssumptionsOnNonlinearf}\\
		\max_{\sigma =0,1,2} \bigg\| \frac{1}{h} \int_\Omega \partial_t^\sigma f_h\cdot x^\perp dx\bigg\|_{C^0([0,T])} \leq M \label{AssumptionsOnRotationOfNonlinearf}
	\end{gather}
	uniformly in $0<h\leq 1$.
	Then there exists $h_0\in (0,1]$ and $C>0$ depending only on $M$ and $T$ such that for every $h\in (0,h_0]$ there is a unique solution $u_h\in \bigcap_{k=0}^4 C^k([0,T]; H^{4-k}_{per}(\Omega))$ of \eqref{NLS1}--\eqref{NLS4} satisfying 
	\begin{align}
	&\max_{\substack{|\alpha| \leq 1, |\beta|\leq 2, |\gamma|\leq 1\\\sigma=0,1,2}} \bigg(\Big\|\Big(\partial_t^2\partial_t^\sigma u_h, \nabla_{x,t}^\beta\frac{1}{h}\e_h(\partial_{(t,x_1)}^\alpha u_h), \nabla_{x,t}^\gamma\nabla_h^2 \partial_{(t,x_1)}^\alpha u_h\Big)\Big\|_{C^0([0,T], L^2)}\notag \\
	&\hspace{3cm}+ \bigg\|\frac{1}{h} \int_\Omega \partial_{(t,x_1)}^{\alpha + \beta} u_h \cdot x^\perp dx\bigg\|_{C^0([0,T])}\bigg) \leq C h^{1+\theta} \label{GlobalMainTheoremInequality}
	\end{align}
	uniformly in $0<h\leq h_0$.
	\label{MainTheorem}
\end{theorem}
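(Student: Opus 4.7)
The plan is to combine a local well-posedness result for the quasilinear hyperbolic system \eqref{NLS1}--\eqref{NLS4} with an $h$-uniform a priori estimate in the norms appearing on the left of \eqref{GlobalMainTheoremInequality}, and then to extend the short-time solution to the full interval $[0,T]$ by a standard continuation argument. The smallness assumptions \eqref{AssumptionOnInitialDataU1}--\eqref{AssumptionOnInitialDataU3} together with the scaling of $f_h$ guarantee that the solution itself remains of size $h^{1+\theta}$, which is what makes the cubic and higher-order corrections manageable.

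For short-time existence I would treat \eqref{NLS1} as a second-order quasilinear symmetric hyperbolic system and apply a Picard iteration in the scaled energy space. The frozen-coefficient operator $\frac{1}{h^2}\divh(D^2\tilde W(0)\nabla_h\cdot)=\frac{1}{h^2}\divh(\sym(\nabla_h\cdot))$ is elliptic (on $\mathcal B$) by Lemma \ref{KornIneqS}, and since $\|\nabla_h u_{0,h}\|_{L^\infty}$ is $O(h^{1+\theta})$ by Sobolev embedding applied to \eqref{AssumptionOnInitialDataU1}--\eqref{AssumptionOnInitialDataU2}, the full operator $\frac{1}{h^2}\divh(D^2\tilde W(\nabla_h u_{0,h})\nabla_h\cdot)$ is a small perturbation of the frozen one for $h$ small. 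The compatibility conditions on the boundary, together with the recursive definitions of $u_{2,h},u_{3,h},u_{4,h}$, ensure that the Cauchy data for all differentiated equations lie in the correct $H^{4-k}_{per}(\Omega)$ spaces, which is exactly what a Hughes--Kato--Marsden type iteration requires to produce a unique local solution $u_h\in\bigcap_{k=0}^4 C^k([0,T_h^*);H^{4-k}_{per}(\Omega))$ on some maximal interval.

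The core of the proof is the a priori estimate. Differentiating \eqref{NLS1} up to twice in $t$ and once in $x_1$ (which is legitimate because of periodicity in $x_1$) yields a linear wave equation for $\partial_t^\sigma\partial_{x_1}^\alpha u_h$ with coefficient matrix $D^2\tilde W(\nabla_h u_h)$ and with forcing built from multilinear expressions in $D^j\tilde W(\nabla_h u_h)$ ($j=3,4$) applied to lower-order derivatives of $u_h$. Testing the differentiated equation with $\partial_t^{\sigma+1}\partial_{x_1}^\alpha u_h$, integrating by parts, and using the coercivity of $D^2\tilde W(\nabla_h u_h)$ on $\sym$ for $\|\nabla_h u_h\|_{L^\infty}$ small, I would derive an inequality of the form
\begin{equation*}
E_h(t)\leq E_h(0)+C\int_0^t\bigl(E_h(s)+h^{2(1+\theta)}\bigr)\,ds,
\end{equation*}
where $E_h$ collects the squared scaled norms on the left of \eqref{GlobalMainTheoremInequality}. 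The cubic and quartic contributions from $D^3\tilde W(\nabla_h u_h)$ and $D^4\tilde W(\nabla_h u_h)$ are absorbed via Corollary \ref{Corollary_L1BoundsD3TildeW}, which produces an extra factor of $h$ for each cubic term. The full scaled gradient $\nabla_h u_h$ is recovered from $\e_h(u_h)$ by Lemma \ref{LemmaKornIntegralForm}; the rotational average $\frac{1}{h}\int_\Omega\partial_t^\sigma u_h\cdot x^\perp\,dx$ that appears in \eqref{KornMeanValue} is controlled separately by testing the equation itself with $x^\perp$ and using \eqref{AssumptionOnInitialDataU3}, \eqref{AssumptionsOnRotationOfNonlinearf}, and the structure of the boundary terms (which vanish since $x^\perp$ is a rigid motion). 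Grönwall then yields $E_h(t)\lesssim h^{2(1+\theta)}$ on $[0,\min(T_h^*,T)]$.

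The main obstacle is closing the estimate uniformly in $h$: the prefactor $h^{-2}$ in front of the divergence makes the characteristic speeds diverge as $h\to 0$, so every constant appearing in Sobolev embeddings, interpolation, and trace inequalities must be computed in the scaled norms $\|\cdot\|_{L^2_h}$, $\|\cdot\|_{H^m_h}$, with each cubic remainder balanced against the coercive quadratic energy via the refined bounds \eqref{Abels_(2.15)}--\eqref{globalBoundednessOfD3W(Z)(Y1,Y2,Y3)LInftyVersion}. This is coupled with a bootstrap: the target estimate itself, once interpolated with Sobolev embeddings, gives $\|\nabla_h u_h\|_{L^\infty}\leq C h^{1+\theta}$, which in turn justifies all the expansions of $D\tilde W$ and the coercivity on the way. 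Once $E_h(t)\leq Ch^{2(1+\theta)}$ has been established on the existence interval, the blow-up criterion for quasilinear hyperbolic systems (together with the smallness of $\|\nabla_h u_h\|_{L^\infty}$, which keeps us away from the singularity of $\tilde W$) allows continuation up to $t=T$ for every $h\in(0,h_0]$. Uniqueness follows from a standard Grönwall argument applied in the linearised energy norm to the difference of two putative solutions.
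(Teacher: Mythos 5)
The paper does not actually carry out a proof of this theorem here: the body of the proof is a single citation to \cite[Theorem 5.1.1]{AmeismeierDiss} and \cite{AbelsAmeismeier1}. What the appendix does supply is precisely the auxiliary machinery that any such proof needs and that your sketch appeals to: the scaled Korn inequalities (Lemmas \ref{KornIneqS} and \ref{LemmaKornIntegralForm}), the estimate on $\partial_z^\beta D^2\tilde W(\nabla_h u_h)$ (Lemma \ref{AbschätzungenAbleitungD2W}), the $h$-uniform elliptic higher-regularity bound (Theorem \ref{TheoremHigherRegularity}), and the energy inequality \eqref{InequalitieLinearisedWeakForm} for the weak linearised problem, in which the rotational average $\frac{1}{h}\int_\Omega u\cdot x^\perp\,dx$ is indeed handled separately, exactly as you propose. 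Your outline -- linearise around data of size $h^{1+\theta}$, propagate scaled energies for $\partial_t^\sigma\partial_{x_1}^\alpha u_h$, control the $D^3\tilde W$, $D^4\tilde W$ remainders via Corollary \ref{Corollary_L2BoundsD3TildeW} (your reference to (2.15)--(2.17)), bootstrap the $L^\infty$ smallness of $\nabla_h u_h$, Gr\"onwall and continue -- is consistent with this machinery and plausibly what the cited reference does.

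One point to be careful about: the single differential inequality you write for $E_h$ will not close by energy testing alone. The higher spatial norms $\|\nabla_h^2\partial^\alpha_{(t,x_1)}u_h\|$ and the second-order part of $\|\frac1h\e_h(\cdot)\|_{H^2}$ must be recovered from the elliptic regularity estimate (Theorem \ref{TheoremHigherRegularity}), using the time-differentiated equation as the source term, rather than from the hyperbolic energy; and the coercivity of $D^2\tilde W(\nabla_h u_h)$ is only up to the rotational mode, so the Korn inequality in integral form is invoked inside the estimate, not just outside. Your sketch acknowledges both issues but blends them into one inequality; the actual proof must run the energy step and the elliptic step as a coupled hierarchy before a Gr\"onwall argument applies.
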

\begin{proof}
	A proof can be found in \cite[Theorem 5.1.1]{AmeismeierDiss} or \cite{AbelsAmeismeier1}.
\end{proof}
The linearised system for \eqref{NLS1}--\eqref{NLS4} is given by 
\begin{align}
\partial_t^2 w - \frac{1}{h^2}\operatorname{div}_h (D^2\tilde{W}(\nabla_h u_h) 
\nabla_h w) &= f\quad\text{in }\Omega\times [0, T) \label{linearEQ_1}\\
D^2\tilde{W}(\nabla_h u)[\nabla_h w]\nu &=0 \quad\text{on } (0,L)\times\partial S\times [0, T)\label{linearEQ_2}\\
w \text{ is $L$-periodic}& \text{ in $x_1$ coordinate}\label{linearEQ_3}\\
(w, \partial_t w)|_{t=0} &= (w_0, w_1).\label{linearEQ_4}
\end{align}
We want to show $h$-independent estimates for solutions of the linearised system. For this we assume that $u_h$ satisfies for $0 < h \leq 1$
\begin{align}
\sup_{|\alpha|\leq 1, k=0,1,2} \bigg(\Big\|\Big(\nabla_{x,t}^k\frac{1}{h} &\e_h(\partial_{(t,x_1)}^\alpha u_h), \nabla_{x,t}^k \nabla_h \partial_{(t,x_1)}^\alpha u_h\Big)\Big\|_{C^0([0,T];L^2(\Omega))} \notag\\
& + \bigg\|\frac{1}{h}\int_\Omega \partial_{(t,x_1)}^{\alpha+\beta} u_h\cdot x^\perp dx\bigg\|_{C^0([0,T])} \bigg) \leq Rh,
\label{AssumptionsOnU1}
\end{align}
where $R\in (0,R_0]$, with $R_0$ chosen later appropriately small.
\begin{lem}
	Assume that \eqref{AssumptionsOnU1} holds, $t\in [0, T]$ and $0< R\leq R_0$. Then
	\begin{equation}
	\left| \frac{1}{h^2} \left(\partial_{z}^\beta D^2\tilde{W}(\nabla_h u_h 
	(t)) \nabla_h w, \nabla_h v)\right)_{L^2(\Omega)}\right| \leq CR \|\nabla_h w\|_{H^{|\beta|-1}_h(\Omega)} \|\nabla_h v\|_{L^2_h(\Omega)}
	\label{equationAbschätzungenAbleitungD2W}
	\end{equation}
	for $1 \leq |\beta| \leq 3$.
	\label{AbschätzungenAbleitungD2W}
\end{lem}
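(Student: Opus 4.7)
The plan is to expand $\partial_z^\beta[D^2\tilde W(\nabla_h u_h)]$ by a Faà di Bruno style chain rule and estimate each resulting term using Corollary~\ref{Corollary_L1BoundsD3TildeW} together with its natural extensions to $D^{2+k}\tilde W$ for $k=1,2,3$ (which follow by differentiating the Taylor/decomposition identities of Lemma~\ref{Lemma_DecompD3TildeW}), exploiting the smallness of $u_h$ built into \eqref{AssumptionsOnU1}.

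\textbf{Step 1: chain-rule expansion.} By the chain rule,
\begin{equation*}
\partial_z^\beta\bigl[D^2\tilde W(\nabla_h u_h)\bigr]=\sum_{k,(\gamma_j)} c_{k,\gamma}\,D^{2+k}\tilde W(\nabla_h u_h)\bigl[\nabla_h\partial_z^{\gamma_1}u_h,\ldots,\nabla_h\partial_z^{\gamma_k}u_h\bigr],
\end{equation*}
the sum running over $1\le k\le |\beta|$ and nonzero multi-indices $\gamma_j$ with $\gamma_1+\cdots+\gamma_k=\beta$. Paired against $\nabla_h w$ and $\nabla_h v$ under $\frac1{h^2}(\cdot,\cdot)_{L^2}$, each summand becomes a $(k+2)$-linear $L^1$-pairing to be bounded.

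\textbf{Step 2: preparation.} The hypothesis $\|\nabla_h u_h\|_{L^\infty}\le\min\{\varepsilon,h\}$ required by Corollary~\ref{Corollary_L1BoundsD3TildeW} (and its $D^{2+k}\tilde W$-analogues) holds for $R_0$ small enough: by Sobolev embedding $H^2(\Omega)\hookrightarrow L^\infty(\Omega)$ and \eqref{AssumptionsOnU1},
\begin{equation*}
\|\nabla_h u_h\|_{L^\infty(\Omega)}\le C\|\nabla_h u_h\|_{H^2(\Omega)}\le CRh.
\end{equation*}
Moreover, splitting $|A|_h^2=h^{-2}|\sym A|^2+|\skew A|^2$ and applying \eqref{AssumptionsOnU1} separately to $\frac1h\varepsilon_h(\partial_z^\gamma u_h)$ and to $\nabla_h\partial_z^\gamma u_h$ yields
\begin{equation*}
\|\nabla_x^j\nabla_h\partial_z^\gamma u_h\|_{L^2_h(\Omega)}\le CRh\qquad\text{for all }|\gamma|+j\le 3,
\end{equation*}
which controls the $H^m_h$-norms of every $\nabla_h\partial_z^{\gamma_j}u_h$ factor appearing in Step~1.

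\textbf{Step 3: termwise bounds.} For the term with $k=1$ (forced when $|\beta|=1$) apply \eqref{Abels_(2.15)} with $Y_1=\nabla_h\partial_z^\beta u_h$, $Y_2=\nabla_h w$, $Y_3=\nabla_h v$:
\begin{equation*}
\frac{1}{h^2}\bigl|\bigl(D^3\tilde W(\nabla_h u_h)[\nabla_h\partial_z^\beta u_h,\nabla_h w],\nabla_h v\bigr)_{L^2}\bigr|\le \frac{C h}{h^2}\cdot CRh\cdot\|\nabla_h w\|_{L^2_h}\|\nabla_h v\|_{L^2_h}=CR\|\nabla_h w\|_{L^2_h}\|\nabla_h v\|_{L^2_h}.
\end{equation*}
For $|\beta|=2,3$ and each Faà di Bruno summand, I would distribute derivatives so that at most $|\beta|-1$ of them land on $\nabla_h w$ (entering $\|\nabla_h w\|_{H^{|\beta|-1}_h}$), the $\nabla_h v$ slot remains untouched, and the remaining $k$ slots are filled by $\nabla_h\partial_z^{\gamma_j}u_h$. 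One of these $k$ slots is placed in $L^\infty$ via $H^2\hookrightarrow L^\infty$ using the top-order $k=2$ bound in \eqref{AssumptionsOnU1} and the $L^\infty$-version \eqref{globalBoundednessOfD3W(Z)(Y1,Y2,Y3)LInftyVersion}; the rest are placed in $L^2_h$ or $H^1_h$ and contribute factors of $CRh$ by Step~2. Together with the prefactor $h^{-2}$ and the factor $h$ coming from the decomposition $D^{2+k}\tilde W(Z)=D^{2+k}\tilde W(0)+A(Z)$ (with $|D^{2+k}\tilde W(0)|_h\le Ch$ and $|A(Z)|\le C|Z|$), exactly one net factor $R$ survives and the claimed inequality follows.

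\textbf{Main obstacle.} The only subtlety is the highest case $|\beta|=3$, $k=3$, where the summand involves $D^5\tilde W(\nabla_h u_h)[\nabla_h\partial_z u_h,\nabla_h\partial_z u_h,\nabla_h\partial_z u_h]$ and the three derivative factors must be split $L^\infty\cdot L^\infty\cdot L^2_h$: this is exactly what the range $k\le 2,|\alpha|\le 1$ in \eqref{AssumptionsOnU1} permits, but one has to verify that the two $L^\infty$ placements do not consume more regularity than is available for $\partial_z u_h$. Once this distribution is fixed and the analogue of Lemma~\ref{Lemma_DecompD3TildeW} for $D^{2+k}\tilde W$ is recorded, summing over the finitely many Faà di Bruno terms yields the lemma.
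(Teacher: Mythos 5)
Your overall strategy — chain-rule expansion of $\partial_z^\beta D^2\tilde W(\nabla_h u_h)$, smallness via Sobolev embedding, then term-by-term $L^1$-bounds using the scaled decomposition of $D^3\tilde W$ — is the right framework. But there is a genuine misreading of the statement in Step 3 that changes the analysis in a concrete way, and one structural claim that does not hold.

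In the lemma, $\partial_z^\beta$ acts only on $D^2\tilde W(\nabla_h u_h(t))$; no derivative falls on $\nabla_h w$ or $\nabla_h v$. Your Step 3 instead proposes to ``distribute derivatives so that at most $|\beta|-1$ of them land on $\nabla_h w$,'' as if a Leibniz rule for $\partial_z^\beta(D^2\tilde W(\nabla_h u_h)\nabla_h w)$ were in play, and this is where you justify the factor $\|\nabla_h w\|_{H^{|\beta|-1}_h}$. That justification is not available. The $H^{|\beta|-1}_h$-norm on $w$ actually enters for a different reason: in the Fa\`a di Bruno sum the delicate summand is the one with $k=1$, $\gamma_1=\beta$, namely $D^3\tilde W(\nabla_h u_h)[\nabla_h\partial_z^\beta u_h,\nabla_h w,\nabla_h v]$. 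For $|\beta|=3$ the factor $\nabla_h\partial_z^\beta u_h$ already consumes all four derivatives available from \eqref{AssumptionsOnU1}, so it lives in $L^2$ only and cannot be placed in $H^1_h$ or $H^2_h$; since $\nabla_h v$ must stay in $L^2_h$, the only remaining slot to absorb the extra integrability is $\nabla_h w$, which is placed in $L^\infty_h$ via $H^2\hookrightarrow L^\infty$ — hence $\|\nabla_h w\|_{H^{|\beta|-1}_h}$. You actually need a companion to Corollary~\ref{Corollary_L1BoundsD3TildeW} with the split $\|Y_1\|_{L^\infty_h}\|Y_2\|_{L^2_h}\|Y_3\|_{L^2_h}$ rather than \eqref{Abels_(2.15)}--\eqref{globalBoundednessOfD3W(Z)(Y1,Y2,Y3)LInftyVersion} as stated (it follows from Lemma~\ref{Lemma_DecompD3TildeW} and H\"older in the obvious way, but none of the three displayed inequalities has this arrangement). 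This $k=1$, $|\beta|=3$ term — not the $k=3$, $D^5\tilde W$ term you flag — is the genuine bottleneck; the $k\geq 2$ terms carry two or three $u_h$-derivative factors, each worth $CRh$, and therefore close against $h^{-2}$ without any structural cancellation from $D^{2+k}\tilde W(0)$.

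That also exposes the second problem: you assert $|D^{2+k}\tilde W(0)|_h\le Ch$ for $k=1,2,3$ as an ``extension'' of Lemma~\ref{Lemma_DecompD3TildeW}, obtained by differentiating the decomposition. Differentiating $D^3\tilde W(G)=D^3\tilde W(0)+A(G)$ gives $D^4\tilde W(G)=DA(G)$ with $DA$ merely bounded — it does not give $|D^4\tilde W(0)|_h\le Ch$. The $h$-gain in $|D^3\tilde W(0)|_h$ comes from the algebraic structure \eqref{Global_ThirdOrderD3WEquality} (every term contains a $\operatorname{sym}$), and that structure is special to the third derivative. Fortunately the false claim is also unnecessary: as noted above, only the $k=1$ summand needs a structural $h$-gain, and there it is exactly $D^3\tilde W(0)$ that appears, so Lemma~\ref{Lemma_DecompD3TildeW} as stated suffices. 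If you remove the Leibniz-on-$w$ reasoning, identify the $k=1$, $|\beta|=3$ summand as the case requiring $\nabla_h w\in L^\infty_h$, and drop the unfounded extension of Lemma~\ref{Lemma_DecompD3TildeW} to $D^{4}\tilde W$ and $D^{5}\tilde W$, the argument does close along these lines.
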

\begin{proof}
	For a proof see \cite[Lemma 5.2.2]{AmeismeierDiss}.
\end{proof}
For the higher regularity estimates we need the following result.
\begin{theorem}
	Assume $u_h$ satisfies \eqref{AssumptionsOnU1}. Then there exist $C>0$ 
	and $R_0\in (0,1]$ such that if $\varphi\in H^{2+k}_{per}(\Omega)$ solves for some $g\in H^k_{per}(\Omega)$ and $g_N\in L^2(0,L;H^{k+\frac{1}{2}}(\partial S))\cap H^{k}(0,L; H^{\frac{1}{2}}(\partial S))$
	\begin{equation}
	\left\{
	\begin{aligned}
	- \frac{1}{h^2} \operatorname{div}_h (D^2\tilde{W}(\nabla_h u_h)\nabla_h\varphi) &= g &&\quad\text{in } \Omega, \\
	D^2\tilde{W}(\nabla_h u_h)[\nabla_h \varphi]\nu\Big|_{(0,L)\times \partial S} &= g_N &&\quad\text{on } \partial\Omega,
	\end{aligned}
	\right.
	\label{LocalStationaryNonZeroNeumannBoundaryEquationHigherRegularity}
	\end{equation}
	then
	\begin{align}\nonumber
	\bigg\| \bigg(\nabla\frac{1}{h}\e_h(\varphi), \nabla_h^2
	\varphi\bigg)\bigg\|_{H^k(\Omega)} &\leq C\bigg(h^2\|g\|_{L^2(\Omega)} + \bigg\|\frac{1}{h} g_N\bigg\|_{L^2(0,L;H^{k+\frac{1}{2}}(\partial S))\cap H^{k}(0,L; H^{\frac{1}{2}}(\partial S))}\\
	& \qquad + \bigg\|\frac{1}{h}\e_h(\varphi)\bigg\|_{H^{0,k+1}(\Omega)} + R
	\bigg|\frac{1}{h} \int_\Omega \varphi\cdot x^\perp dx\bigg|\bigg).
	\label{InequalityHigherRegularity}
	\end{align}
	\label{TheoremHigherRegularity}
\end{theorem}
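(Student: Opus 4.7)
The plan is to view the system \eqref{LocalStationaryNonZeroNeumannBoundaryEquationHigherRegularity} as a small $R$-perturbation of the constant-coefficient Lam\'e-type system associated with $D^2\tilde{W}(0) = \operatorname{sym}$, and to establish \eqref{InequalityHigherRegularity} by induction on $k\in\N_0$. Concretely, I would rewrite the problem as
\begin{align*}
-\frac{1}{h^2}\operatorname{div}_h\bigl(D^2\tilde{W}(0)\nabla_h\varphi\bigr) &= g + \frac{1}{h^2}\operatorname{div}_h\bigl(Z\nabla_h\varphi\bigr) &&\text{in }\Omega, \\
D^2\tilde{W}(0)[\nabla_h\varphi]\nu &= g_N - Z[\nabla_h\varphi]\nu &&\text{on }(0,L)\times\partial S,
\end{align*}
with $Z := D^2\tilde{W}(\nabla_h u_h) - D^2\tilde{W}(0) = \int_0^1 D^3\tilde{W}(\tau\nabla_h u_h)[\nabla_h u_h,\cdot]\,d\tau$. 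By Lemma \ref{AbschätzungenAbleitungD2W} and \eqref{AssumptionsOnU1}, every occurrence of $Z$ and its $x$-derivatives contributes a factor of size $\mathcal{O}(R)$ in the relevant $h$-weighted norms, so the perturbation can be absorbed on the left for $R_0$ sufficiently small.

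For the base case $k=0$, I would treat the constant-coefficient system as an elliptic boundary value problem on the cross-section $S$ for each fixed $x_1$, with the $\partial_{x_1}$ contributions appearing as lower-order data. Standard $H^2(S)$ elliptic regularity for the Lam\'e system with Neumann condition on $\partial S$, combined with the sharp Korn inequality Lemma \ref{KornIneqS} to control the full scaled gradient $\nabla_h\varphi$ by $\frac{1}{h}\e_h(\varphi)$ modulo the single rigid rotation direction $x^\perp$, delivers the four terms on the right-hand side of \eqref{InequalityHigherRegularity}. The $h^2\|g\|_{L^2}$ weight arises from the $\frac{1}{h^2}$ in $\operatorname{div}_h$, the $\frac{1}{h} g_N$ weight from the fact that the lateral normal $\nu$ has only cross-sectional components while $\operatorname{sym}\nabla_h$ involves $\frac{1}{h}\partial_{x'}$, the $\|\frac{1}{h}\e_h(\varphi)\|_{H^{0,1}}$ term from the $\partial_{x_1}$-derivatives entering as lower-order data on $S$, and the rotation term from the kernel of Korn that is only seen through the integral $\frac{1}{h}\int_\Omega \varphi\cdot x^\perp\,dx$.

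For the inductive step $k\mapsto k+1$, I would differentiate the whole system in $x_1$, which is compatible with the $L$-periodicity. Then $\partial_{x_1}\varphi$ satisfies an equation of the same form with source $\partial_{x_1}g$ and an additional commutator contribution $\frac{1}{h^2}\operatorname{div}_h\bigl(D^3\tilde{W}(\nabla_h u_h)[\nabla_h\partial_{x_1}u_h,\nabla_h\varphi]\bigr)$ together with a matching boundary commutator. Lemma \ref{AbschätzungenAbleitungD2W} with \eqref{AssumptionsOnU1} bounds this commutator by $CR\|\nabla_h\varphi\|_{H^{k+1}_h}$, which is absorbed or estimated via the inductive hypothesis for $\varphi$ itself. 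Applying the inductive hypothesis to $\partial_{x_1}\varphi$ and noting that $\|\frac{1}{h}\e_h(\varphi)\|_{H^{0,k+2}}$ controls $\|\frac{1}{h}\e_h(\partial_{x_1}\varphi)\|_{H^{0,k+1}}$ then closes the iteration.

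The main obstacle will be producing precisely the mixed boundary norm $L^2(0,L;H^{k+\frac{1}{2}}(\partial S))\cap H^k(0,L;H^{\frac{1}{2}}(\partial S))$ on the right-hand side: because $\nabla_h$ mixes the unscaled $\partial_{x_1}$ with the scaled $\frac{1}{h}\partial_{x'}$, one has to split $g_N$ into its fully tangential and cross-sectional parts and apply an anisotropic trace theorem so that exactly the weight $\frac{1}{h}g_N$ emerges, balanced against the $h^2\|g\|_{L^2}$ factor on the interior source. Once this $h$-bookkeeping is in place, the absorption of the $R$-small perturbation terms and the induction on $k$ proceed in a standard fashion.
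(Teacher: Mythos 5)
The paper itself does not prove Theorem~\ref{TheoremHigherRegularity}; it defers to the cited reference (Theorem~3.5 in the preprint on large-time existence), so there is no in-text argument to compare your outline against line by line. That said, your strategy---viewing $D^2\tilde{W}(\nabla_h u_h)$ as an $R$-small perturbation of the constant-coefficient operator $D^2\tilde{W}(0)=\operatorname{sym}$, obtaining cross-sectional $H^2$ regularity on $S$ for each $x_1$ with the longitudinal derivatives treated as data, using Korn (Lemma~\ref{KornIneqS}) to handle the one-dimensional kernel through the $\int_\Omega\varphi\cdot x^\perp\,dx$ term, and then inducting in $\partial_{x_1}$---is precisely the standard route for such anisotropic thin-domain estimates, and your $h$-bookkeeping ($h^2$ on $g$ from the $\tfrac{1}{h^2}\operatorname{div}_h$, $\tfrac{1}{h}$ on $g_N$ because the lateral normal only sees $\tfrac{1}{h}\partial_{x'}$) is correct, as is the identification of $Z = \int_0^1 D^3\tilde{W}(\tau\nabla_h u_h)[\nabla_h u_h,\cdot]\,d\tau$.

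Two details deserve more care than your sketch gives them. First, the constant-coefficient base case must produce an $h$-\emph{uniform} constant. For $D^2\tilde{W}(0)=\operatorname{sym}$ the leading-order symbol on each fiber $\{x_1\}\times S$ genuinely decouples (a scalar Laplacian for $\varphi_1$ and a 2D Lam\'e system with $\lambda=0$, $\mu=\tfrac12$ for $(\varphi_2,\varphi_3)$, each at weight $h^{-4}$), and one needs to check that the resulting cross-sectional elliptic estimate is uniform; this is where the specific structure of $\operatorname{sym}$, rather than an abstract elliptic operator, is used, and you should say so. Second, the absorption of $\tfrac{1}{h^2}\operatorname{div}_h(Z\nabla_h\varphi)$ requires placing $(\nabla_h Z)\nabla_h\varphi$ in $L^2$, which is a product of two $H^1$-functions; one has to invoke Sobolev embeddings ($H^2(\Omega)\hookrightarrow L^\infty$, $H^1(\Omega)\hookrightarrow L^6$, with $\Omega$ fixed so the constants are $h$-independent) in the form furnished by \eqref{AssumptionsOnU1}, rather than relying only on the bilinear-form bound of Lemma~\ref{AbschätzungenAbleitungD2W}. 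These are filled-in details rather than gaps, but as written they are the places where the proof could go wrong if handled na\"ively.
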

\begin{proof}
 See \cite[Theorem~3.5]{AbelsAmeismeier1}.
\end{proof}
In order to bound differences between the approximation $\tilde{u}_h$ and the analytic solution $u_h$ we consider the following weak form of the linearised system \eqref{linearEQ_1}--\eqref{linearEQ_4}:		
\begin{align}
-\big(\partial_t w, \partial_t\varphi\big)_{L^2(Q_T)} + \frac{1}{h^2} \big(D^2\tilde{W}(\nabla_h u_h) \nabla_h w, \nabla_h \varphi\big)_{L^2(Q_T)} &= \big(f_1, \nabla_h \varphi\big)_{L^2(Q_T)} + \big(f_2, \varphi\big)_{L^2(Q_T)}\notag\\
+ \langle w_1, \varphi|_{t=0} \rangle_{X'_h, X_h} + \frac{1}{h^2} (\tr{\partial\Omega}&(a_N), \tr{\partial\Omega}(\varphi))_{L^2(0,T;L^2(\partial\Omega))} \label{GlobalWeakLinearisedSystem}\\
w \text{ is } L\text{-periodic in }& x_1 \text{ direction}\notag\\
w|_{t=0} &= w_0\notag
\end{align}
for all $\varphi\in C^1([0,T]; H^1_{per, (0)}(\Omega;\R^3))$ with $\varphi|_{t=T} = 0$. Here we denote $Q_T := \Omega\times (0,T)$ and 
\begin{equation*}
X_h := H^1_{per, (0)}(\Omega;\R^3) := H^1_{per}(\Omega;\R^3) \cap \bigg\{u\in L^1(\Omega;\R^3)\;:\; \int_\Omega u(x) dx=0 \bigg\}
\end{equation*}
equipped with the $h$ dependent norm
\begin{equation*}
\|u\|_{X_h} := \|\nabla_h u\|_{L^2_h(\Omega)}.
\end{equation*}		
\begin{lem}
	Assume that $u_h$ satisfies \eqref{AssumptionsOnU1} with $R\in (0,R_0]$ and $h\in (0,1]$. Let $R_0$ be sufficiently small and $w\in C^0([0, T]; X_h)\cap C^1([0,T];L^2(\Omega;\R^3))$ be a solution of \eqref{GlobalWeakLinearisedSystem} for $f_1\in L^1(0,T;L^2(\Omega, \R^{3\times 3}))$, $f_2 \in L^1(0,T;L^2(\Omega;\R^3))$, $a_N\in L^1(0,T; H^1(\Omega;\R^3))$ $w_0\in L^2(\Omega;\R^3)$ and $w_1\in X'_h$. Then there are $C_0$, $C>0$ independent of $w$ and $T$ such that
	\begin{align}
	\bigg\|\bigg(w, \frac{1}{h} \e_h(u)\bigg)\bigg\|_{C^0([0,T];L^2)} &\leq C_0 e^{CRT} \bigg(\|f_1\|_{L^1(0,T; (L^2_h)')} + \|f_2\|_{L^1(0,T;L^2)} + \|w_0\|_{L^2(\Omega)} + \|w_1\|_{X'_h} \nonumber\\
	&+ \frac{1}{h^2} \|a_N\|_{L^1(0,T;H^1)} + (1+T)\bigg\|\frac{1}{h} \int_\Omega u\cdot x^\perp dx\bigg\|_{C^0([0,T])}\bigg),\label{InequalitieLinearisedWeakForm}
	\end{align}
	where $u(t) := \int_0^t w(\tau) d\tau$ and $(L^2_h)'$ is an abbreviation for $(L^2_h(\Omega;\Rtimes))'$.
\end{lem}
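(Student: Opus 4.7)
The plan is to run a standard energy argument on the weak formulation \eqref{GlobalWeakLinearisedSystem}, using a Ladyzhenskaya-type test function that turns it into a pointwise-in-time energy identity. For fixed $s\in(0,T]$ I would insert
\[
\varphi(x,t):=\chi_{[0,s]}(t)\,(u(x,s)-u(x,t))
\]
with $u(x,t)=\int_0^t w(x,\tau)\,d\tau$ as in the statement. Strictly, $\varphi$ must lie in $C^1([0,T];H^1_{per,(0)}(\Omega;\R^3))$, so I would first mollify the temporal cut-off at $t=s$, pass to the limit at the end, and subtract the spatial mean of $\varphi$; the mean of $w$ itself obeys a second-order ODE forced only by $\int_\Omega f_2\,dx$ and by the boundary contribution of $a_N$, both of which are already dominated by the right-hand side of \eqref{InequalitieLinearisedWeakForm}.

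On $(0,s)$ one has $\partial_t\varphi=-w$, hence the inertial term reduces to
\[
-(\partial_t w,\partial_t\varphi)_{L^2(Q_T)}=\tfrac12\|w(s)\|_{L^2(\Omega)}^2-\tfrac12\|w_0\|_{L^2(\Omega)}^2-\langle w_1,\varphi|_{t=0}\rangle_{X'_h,X_h}
\]
after absorbing the initial-velocity term of the weak form. For the stiffness part I would substitute $w=\partial_t u$, integrate by parts in $t$, and use $u(0)=0$ to isolate
\[
\tfrac{1}{h^2}\bigl(D^2\tilde W(\nabla_h u_h)\nabla_h w,\nabla_h\varphi\bigr)_{L^2(Q_T)}=\tfrac{1}{2h^2}\bigl(D^2\tilde W(\nabla_h u_h(s))\nabla_h u(s),\nabla_h u(s)\bigr)_{L^2(\Omega)}+\mathcal R(s),
\]
with $\mathcal R(s)$ collecting the two remaining integrals in which $\partial_t D^2\tilde W(\nabla_h u_h)$ is contracted with $\nabla_h u$. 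Since $\|\nabla_h u_h\|_{L^\infty}\lesssim R$ by \eqref{AssumptionsOnU1}, a coercivity bound yields $\tfrac{1}{h^2}(D^2\tilde W(\nabla_h u_h(s))\nabla_h u(s),\nabla_h u(s))\ge\tfrac12\|\tfrac{1}{h}\e_h(u(s))\|_{L^2}^2$ for $R_0$ sufficiently small, which supplies the second half of the left-hand side of \eqref{InequalitieLinearisedWeakForm}.

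The right-hand side contributions are then estimated by the dualities $|(f_1,\nabla_h\varphi)|\le\|f_1\|_{L^1((L^2_h)')}\sup_t\|\nabla_h\varphi(t)\|_{L^2_h}$ and $|(f_2,\varphi)|\le\|f_2\|_{L^1(L^2)}\sup_t\|\varphi(t)\|_{L^2}$, and the boundary term by $|\tfrac{1}{h^2}(\tr{\partial\Omega}(a_N),\tr{\partial\Omega}(\varphi))|\le\tfrac{C}{h^2}\|a_N\|_{L^1(H^1)}\sup_t\|\varphi(t)\|_{L^2(0,L;H^1(S))}$ via a trace estimate. Each resulting norm of $\varphi(t)=u(s)-u(t)$ is bounded in terms of $\|\tfrac{1}{h}\e_h(u)\|_{C^0(L^2)}$ plus a rotational correction via the Korn inequality \eqref{KornMeanValue} (and Poincar\'e where needed). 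The residual $\mathcal R(s)$ is absorbed by $CR\int_0^s\|\tfrac{1}{h}\e_h(u(t))\|_{L^2}^2\,dt$ using Lemma \ref{AbschätzungenAbleitungD2W}. Collecting everything, absorbing the $\|\tfrac{1}{h}\e_h(u(s))\|^2$ contributions that appear on the right into the left, and applying Gronwall in $s$ produces the factor $e^{CRT}$ and yields \eqref{InequalitieLinearisedWeakForm}.

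The main obstacle is the careful treatment of the infinitesimal rotation. Since $\varphi=u(s)-u(t)$ inherits a non-vanishing rotational mean from $u$, every use of \eqref{KornMeanValue} to control $\|\nabla_h\varphi\|_{L^2_h}$ generates a correction of the form $\tfrac{1}{h}|\int_\Omega(u(s)-u(t))\cdot x^\perp\,dx|$; collecting these across all source terms and pairing them appropriately with $\|f_1\|_{L^1((L^2_h)')}$, $\|f_2\|_{L^1(L^2)}$, $\|w_1\|_{X'_h}$ and $\tfrac{1}{h^2}\|a_N\|_{L^1(H^1)}$ is precisely what produces the explicit $(1+T)\|\tfrac{1}{h}\int_\Omega u\cdot x^\perp\,dx\|_{C^0([0,T])}$ factor on the right of \eqref{InequalitieLinearisedWeakForm}. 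Crucially, this rotational correction must not be fed back into the Gronwall loop, since its prefactor grows linearly in $T$.
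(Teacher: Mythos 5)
Your test function $\varphi(x,t)=\chi_{[0,s]}(t)\,(u(x,s)-u(x,t))$ is, up to sign, exactly the paper's $\tilde u_{T'}(t)=-\int_t^{T'}w(\tau)\,d\tau\,$, and the ensuing energy identity, the use of Lemma \ref{AbschätzungenAbleitungD2W} for the commutator residual, the Korn/rotational bookkeeping, and the Gronwall step follow the same route as the paper's proof. Two small imprecisions to flag: the coercivity inequality you state is not quite correct as written — because $D^2\tilde W(\nabla_h u_h)$ is only a perturbation of $D^2\tilde W(0)$ and Korn's inequality \eqref{KornMeanValue} must be invoked, one only obtains $\frac{1}{2h^2}(D^2\tilde W(\nabla_h u_h)\nabla_h u,\nabla_h u)\ge c\|\tfrac1h\e_h(u)\|_{L^2}^2 - CR\,|\tfrac1h\int_\Omega u\cdot x^\perp\,dx|^2$, i.e. with a rotational deficit (which is harmless, since it lands in the $(1+T)$-term you already track); and the absorption step requires restricting first to subintervals with $R(T_{j+1}-T_j)\le\kappa$ and then iterating, a two-stage structure that the phrase ``absorb and apply Gronwall'' glosses over but that the paper spells out to obtain the $e^{CRT}$ constant.
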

\begin{proof}
	Let $0 \leq T' \leq T$ and define $\tilde{u}_{T'}(t) = -\int_t^{T'} w(\tau) d\tau$. We use, after smooth approximation, $\varphi = \tilde{u}_{T'} \chi_{[0,T']}$. Then it follows
	\begin{align*}
	\frac{1}{2} \|w(&T')\|^2_{L^2} + \frac{1}{2 h^2} \Big(D^2\tilde{W}(\nabla_h u_h|_{t=0}) \nabla_h \tilde{u}_{T'}(0), \nabla_h \tilde{u}_{T'}(0)\Big)_{L^2(\Omega)}\\
	&= -\frac{1}{2h^2} \Big(\partial_t D^2\tilde{W}(\nabla_h u_h) \nabla_h \tilde{u}_{T'}, \nabla_h \tilde{u}_{T'}\Big)_{L^2(Q_{T'})} - (f_1, \nabla_h \tilde{u}_{T'})_{L^2(Q_{T'})} - (f_2, \tilde{u}_{T'})_{L^2(Q_{T'})}\\
	& \quad + \langle w_1, \tilde{u}_{T'}(0) \rangle_{X'_h, X_h} - \frac{1}{h^2} (a_N, \operatorname{tr}_{\partial\Omega}(\tilde{u}_{T'}))_{L^2(0,T'; L^2(\partial\Omega))} + \frac{1}{2} \|w(0)\|^2_{L^2}.
	\end{align*}
	Using 
	\begin{align*}
	\frac{1}{2 h^2} \Big(D^2\tilde{W}(\nabla_h u_h|_{t=0}) \nabla_h \tilde{u}_{T'}(0), &\nabla_h \tilde{u}_{T'}(0)\Big)_{L^2(\Omega)}\\
	& \geq \frac{c_0}{2} \bigg\|\frac{1}{h} \e_h(\tilde{u}_{T'}(0))\bigg\|^2_{L^2(\Omega)} - CR \bigg|\frac{1}{h} \int_\Omega \tilde{u}_{T'}(0) \cdot x^\perp dx\bigg|^2
	\end{align*}
	it follows with $\tilde{u}_{T'}(0) = -u(T')$
	\begin{align*}
	\|w&(T')\|^2_{L^2} + \bigg\|\frac{1}{h} \e_h(u(T'))\bigg\|^2_{L^2} \leq CR \int_0^{T'} \bigg\|\frac{1}{h}\e_h(\tilde{u}_{T'})\bigg\|^2_{L^2} + \bigg|\frac{1}{h}\int_\Omega \tilde{u}_{T'}\cdot x^\perp dx\bigg|^2 dt\\
	& + C\Big(\|f_1\|_{L^1(0,T; (L^2_h)')} + \|f_2\|_{L^1(0,T;L^2)} + \|w_1\|_{X'_h} + \frac{1}{h^2}\|a_N\|_{L^1(0,T;H^1)}\Big) \|\nabla_h \tilde{u}_{T'}\|_{C^0([0,T];L^2_h)}\\
	& + C\|w_0\|^2_{L^2} + CR\bigg|\frac{1}{h}\int_\Omega \tilde{u}_{T'}(0) \cdot x^\perp dx\bigg|^2
	\end{align*}
	where we used Lemma \ref{AbschätzungenAbleitungD2W} and Korn's inequality, as well as the subsequent inequalities
	\begin{align*}
	& |\langle w_1, \tilde{u}_{T'}(0) \rangle_{X'_h, X_h}| \leq \|w_1\|_{X'_h} \|\tilde{u}_{T'}(0)\|_{X_h} \leq \|w_1\|_{X'_h} \|\nabla_h \tilde{u}_{T'}\|_{C^0([0,T'];L^2_h)}\\
	& |(f_1, \nabla_h \tilde{u}_{T'})_{Q_{T'}}| \leq \int_0^{T'} \|f_1(t)\|_{(L^2_h)'} \|\nabla_h \tilde{u}_{T'}\|_{L^2} dt \leq \|f_1(t)\|_{L^1(0,T;(L^2_h)')} \|\nabla_h \tilde{u}_{T'}\|_{C^0([0,T'];L^2_h)}.
	\end{align*}
	Now we can use $\tilde{u}_{T'}(0) = -u(T')$ and $\tilde{u}_{T'}(t) = -u(T') + u(t)$ to deduce
	\begin{align*}
	\bigg|\int_0^{T'} \bigg\|\frac{1}{h}\e_h(\tilde{u}_{T'}(t))\bigg\|^2_{L^2} dt\bigg| &\leq \bigg\|\frac{1}{h}\e_h(u)\bigg\|^2_{L^2(Q_T)} + T' \bigg\|\frac{1}{h} \e_h(u(T'))\bigg\|^2_{L^2(\Omega)},\\
	\|\nabla_h \tilde{u}_{T'} \|_{C^0([0,T']; L^2_h(\Omega))} &\leq C \bigg\|\frac{1}{h} \e_h(u) \bigg\|_{C^0([0,T']; L^2(\Omega))} + C\bigg\|\frac{1}{h}\int_\Omega u \cdot x^\perp dx\bigg\|_{C^0([0,T'])},\\
	\bigg|\int_0^{T'}\int_\Omega \tilde{u}_{T'} \cdot x^\perp dx dt \bigg|&\leq T'\bigg|\int_\Omega u(T')\cdot x^\perp dx\bigg| + T' \bigg\|\int_\Omega u\cdot x^\perp dx\bigg\|_{C^0([0,T'])}.
	\end{align*}
	Using the later inequalities and applying the supremum over $T'\in [0, \bar{T}]$ such that $R\bar{T} \leq \kappa$, $\kappa\in (0,1]$ it follows
	\begin{align*}
	\|w&\|^2_{C^0([0,\bar{T}], L^2)} + \bigg\|\frac{1}{h} \e_h(u)\bigg\|^2_{C^0([0,\bar{T}];L^2)} \leq CR \bigg\|\frac{1}{h} \e_h(u)\bigg\|^2_{L^2(Q_{\bar{T}})} + C\kappa \bigg\|\frac{1}{h} \e_h(u)\bigg\|^2_{C^0([0,\bar{T}];L^2)}\\
	& \quad+ C\Big(\|f_1\|_{L^1(0,T; (L^2_h)')} + \|f_2\|_{L^1(0,T;L^2)} + \|w_1\|_{X'_h} + \|w_1\|_{X'_h} + \frac{1}{h^2}\|a_N\|_{L^1(0,T;H^1)}\Big)\\
	&\qquad\quad  \times \bigg(\bigg\|\frac{1}{h} \e_h(u) \bigg\|_{C^0([0,T']; L^2)} + C\bigg\|\frac{1}{h}\int_\Omega u \cdot x^\perp dx\bigg\|_{C^0([0,T'])}\bigg)\\
	&\quad + C\|w_0\|^2_{L^2} + CR(1+\bar{T})\bigg\|\frac{1}{h}\int_\Omega u\cdot x^\perp dx\bigg\|^2_{C^0([0,\bar{T}])}
	\end{align*}
	Hence, with Young's inequality and $\kappa$, thus $\bar{T}$, small enough, we can conclude with an absorption argument that
	\begin{align*}
	\|w&\|^2_{C^0([0,\bar{T}], L^2)} + \bigg\|\frac{1}{h} \e_h(u)\bigg\|^2_{C^0([0,\bar{T}];L^2)} \leq CR \bigg\|\frac{1}{h} \e_h(u)\bigg\|^2_{L^2(Q_{\bar{T}})} + C_0 \bigg(\|f_1\|^2_{L^1(0,T; (L^2_h)')} \\
	&\quad + \|f_2\|^2_{L^1(0,T;L^2)} + \|w_1\|^2_{X'_h} + \frac{1}{h^4}\|a_N\|^2_{L^1(0,T;H^1)} + (1+T)\bigg\|\frac{1}{h}\int_\Omega u\cdot x^\perp dx\bigg\|^2_{C^0([0,T])}\bigg).
	\end{align*}
	Applying now the Lemma of Gronwall we obtain \eqref{InequalitieLinearisedWeakForm} for all $0 < T < \infty$ such that $RT \leq \kappa$ holds.
	
	For an arbitrary $0< T <\infty$, we choose $0=T_0  < T_1 < \ldots < T_{N-1} < T_N = T$ such that $\frac{1}{2}\kappa \leq R(T_{j+1} - T_j) \leq \kappa$ for $j=0,\ldots N-1$. Then we use $\varphi = \tilde{u}_{T_{j+1}} \chi_{[T_{j},T_{j+1}]}$ and obtain via analogous arguments as above, because of $R(T_{j+1} - T_j) \leq \kappa$, 
	\begin{align*}
	\bigg\|\bigg(w, \frac{1}{h} \e_h(u)\bigg)\bigg\|_{C^0([T_j,T_{j+1}];L^2)} \leq C_0 &e^{CR(T_{j+1} - T_j)} \bigg(\bigg\|\bigg(w(T_j), \frac{1}{h}\e_h(u(T_j))\bigg)\bigg\|_{L^2} \\
	&  + \|f_1\|_{L^1(0,T; (L^2_h)')} + \|f_2\|_{L^1(0,T;L^2)} + \|w_1\|_{X'_h}\\
	&  + \frac{1}{h} \|a_N\|_{L^1(0,T;H^1)} + (1+T)\bigg\|\frac{1}{h} \int_\Omega u\cdot x^\perp dx\bigg\|_{C^0([0,T])} \bigg).
	\end{align*}
	Hence an iterative application leads to 
	\begin{align*}
	\bigg\|\bigg(w, \frac{1}{h} \e_h(u)\bigg)\bigg\|_{C^0([0,T];L^2)} &\leq (C_0)^N e^{CRT} \bigg(\|f_1\|_{L^1(0,T; X'_h)} + \|f_2\|_{L^1(0,T;L^2)} + \|w_0\|_{L^2(\Omega)}  \\
	& \; + \|w_1\|_{X'_h} + \frac{1}{h} \|a_N\|_{L^1(0,T;H^1)} + (1+T)\bigg\|\frac{1}{h} \int_\Omega u\cdot x^\perp dx\bigg\|_{C^0([0,T])} \bigg).
	\end{align*}
	Finally due to $\frac{1}{2}\kappa \leq R(T_{j+1} - T_j)$, we obtain $N\leq 2\kappa^{-1}RT$ and thus
	\begin{equation*}
	(C_0)^N = \exp(N\ln C_0) \leq \exp( 2\kappa^{-1}RT \ln C_0) \leq \exp(C'_0 RT).
	\end{equation*}
	Hence \eqref{InequalitieLinearisedWeakForm} holds for some $C_0$, $C >0$ independent of $R\in (0,R_0]$, $h\in (0, 1]$ and $0 < T <\infty$.
\end{proof}

\bibliographystyle{abbrv}
\bibliography{bib.bib}

%\printbibliography
\end{document}